\newcommand{\sign}{\mathop{\rm sign}}
\newcommand*{\mailto}[1]{\href{mailto:#1}{\nolinkurl{#1}}}
\newcommand{\arxiv}[1]{\href{http://arxiv.org/abs/#1}{arXiv:#1}}
\newcommand{\RN}[1]{%
  \textup{\uppercase\expandafter{\romannumeral#1}}%
}
\DeclareMathOperator{\id}{Id}
\DeclareMathOperator{\meas}{meas}
\DeclareMathOperator{\supp}{supp}
\newcommand{\D}{\ensuremath{\mathcal{D}}}
\newcommand{\G}{\ensuremath{\mathcal{G}}}
\newcommand{\F}{\ensuremath{\mathcal{F}}}
\newcommand{\Real}{\mathbb R}
\newcommand{\norm}[1]{\left\Vert#1\right\Vert}
\newtheorem{theorem}{Theorem}[section]
\newtheorem{lemma}[theorem]{Lemma}
\newtheorem{definition}[theorem]{Definition}
\newtheorem{corollary}[theorem]{Corollary}
\numberwithin{equation}{section}
\begin{document}

\title[Uniqueness for the CH equation]{Uniqueness of dissipative solutions for the Camassa--Holm equation}

\author[K. Grunert]{Katrin Grunert}
\address{Department of Mathematical Sciences\\ NTNU Norwegian University of Science and Technology\\ NO-7491 Trondheim\\ Norway}
\email{\mailto{katrin.grunert@ntnu.no}}
\urladdr{\url{https://www.ntnu.edu/employees/katrin.grunert}}

\thanks{We acknowledge support by the grant {\it Wave Phenomena and Stability --- a Shocking Combination (WaPheS)}  from the Research Council of Norway. In addition, we were supported by the Swedish Research Council under the grant no. 2021-06594 while visiting the Institut Mittag-Leffler in Djursholm, Sweden, during the fall semester 2023.}  
\subjclass{Primary: 35A02, 35L45 Secondary: 35B60}
\keywords{Camassa--Holm equation, uniqueness, dissipative solutions}

\begin{abstract} 
We show that the Cauchy problem for the Camassa--Holm equation has a unique, global, weak, and dissipative solution for any initial data $u_0\in H^1(\mathbb{R})$, such that $u_{0,x}$ is bounded from above almost everywhere. In particular, we establish a one-to-one correspondence between the properties specific to the dissipative solutions and a solution operator associating to each initial data exactly one solution.
\end{abstract}

\maketitle

\section{Introduction}\label{formal}
The Camassa--Holm (CH) equation, which reads 
\begin{equation}\label{CH}
u_t-u_{txx}+3uu_x-2u_xu_{xx}-uu_{xxx}=0,
\end{equation}
was first studied in the context of water waves in \cite{CH, CHH}. Since then numerous works have been devoted to the study of the CH equation due to its rich mathematical structure and many interesting properties. For example, it has a bi-Hamiltonian structure \cite{FF}, is formally integrable \cite{Co}, and has infinitely many conserved quantities \cite{L}. 
 
 One property of solutions to the CH equation has attracted particular attention: wave breaking. That is, even for smooth initial data, classical solutions might only exist locally, since $u_x(t,x)$ might become unbounded from below pointwise within finite time. In addition, energy concentrates on sets of measure zero. This combination yields that weak solutions in $H^1(\Real)$ exist globally, but might not be unique. If the latter is the case, there exist infinitely many ways of prolonging the weak solution beyond wave breaking by considering pairs $(u,\mu)$, where $u$ is the wave profile, while $\mu$ denotes the positive Radon measure describing the energy distribution and satisfying $d\mu_{\mathrm{ac}}=(u^2+u_x^2) dx$. The two most prominent solution concepts are the conservative solutions \cite{BC2, HR2, EK}, which preserve the energy $\mu(t,\Real)$ and the dissipative solutions \cite{BC, HR, GHR}, which lose all the energy which concentrates on sets of measure zero and is described by $\mu_{\mathrm{sing}}$, the singular part of $\mu$. An interpolation between these two is given by the $\alpha$-dissipative solutions with $\alpha\in [0,1]$, where an $\alpha$ part of the concentrated energy is dissipated upon wave breaking \cite{GHR2}. In addition, there exist also solutions, which are not covered by the concept of $\alpha$-dissipative solutions like the stumpons, which are traveling wave solutions, but not conservative, see \cite{GG}.
 
In this article we focus on studying the uniqueness of weak dissipative solutions to the CH equation. The existence of these solutions has been shown, see e.g., \cite{BC,HR,GHR2}, by rewriting the CH equation as a system of ordinary differential equations via a generalized method of characteristics. The underlying idea is the following. Applying the inverse Helmholtz operator to \eqref{CH}, the CH equation rewrites as
\begin{equation}\label{CH1}
 u_t+uu_x= -p_x,
 \end{equation}
 where
 \begin{equation*}
 p(t,x)= \frac14 \int_\Real e^{-\vert x-y\vert } (2u^2+u_x^2)(t,y) dy
 \end{equation*}
 and $u(t,\cdot)$ belongs to $H^1(\Real)$.  
 Under the assumption that $u$ is a smooth solution in $H^1(\Real)$, the time evolution of $(u^2+u_x^2)(t,x)$ is given by 
 \begin{equation}\label{CH2}
 (u^2+u_x^2)_t + (u(u^2+u_x^2))_x= (u^3-2pu)_x,
 \end{equation}
 and the solution to \eqref{CH1} and \eqref{CH2} can be computed by the classical method of characteristics and is unique. Here, the characteristic $y(t,\xi)$ satisfies the ordinary differential equation 
 \begin{equation}\label{int:char}
 y_t(t,\xi)=u(t,y(t,\xi)).
 \end{equation}
 In the case of weak solutions to the CH equation, due to wave breaking, $u(t,\cdot)$ can only be expected to be H{\"o}lder continuous and hence the above differential equation might not have a unique solution. Furthermore, $(u^2+u_x^2)(t,x)$ turns into a positive, finite Radon measure $d\mu$, with $d\mu_{\mathrm{ac}}= (u^2+u_x^2)dx$ as wave breaking occurs. In the case of conservative solutions, cf. \cite{BC2,HR2}, this measure would satisfy the transport equation 
 \begin{equation*}
 \mu_t+(u\mu)_x= (u^3-2pu)_x,
 \end{equation*}
 while in the dissipative case one has 
 \begin{equation*}
 \mu_t+(u\mu)_x\leq (u^3-2pu)_x.
 \end{equation*}
 However, the latter inequality does not specify how much energy is dissipated upon wave breaking and in fact there is no (partial) differential equation known, which $\mu$ satisfies. Thus to specify how much energy is exactly dissipated upon wave breaking, the equation is reformulated via a generalized method of characteristics in Lagrangian coordinates, where it is possible to define the dissipative as well as the $\alpha$-dissipative solution concept for any $\alpha\in [0,1]$, see \cite{GHR2}. 
 
For the dissipative solutions, the key idea behind the underlying change of variables is to consider triplets $(u,\mu,\nu)= (u, (u^2+u_x^2), \nu)$ instead of pairs $(u,\mu)= (u,(u^2+u_x^2) dx)$, where $u$ satisfies \eqref{CH1} and $\nu$ satisfies the transport equation 
 \begin{equation}\label{int:trans}
\nu_t+(u\nu)_x= (u^3-2pu)_x,
\end{equation}
together with $d\nu_{ac}= (u^2+u_x^2) dx$. Here, $\nu(0)$, the measure at time $t=0$, can be any positive, finite Radon measure such that $d\nu_{ac}(0)=(u^2_0+u_{0,x}^2) dx$ and hence is not unique. Nevertheless, the choice of $\nu$ has no influence on the resulting pair $(u,\mu)$, cf. Lemma~\ref{lem:main2} and for any $t$ one has $d\nu_{ac}(t)= (u^2+u_x^2)(t) dx$. Furthermore, the system of equations \eqref{CH1} and \eqref{int:trans} can be studied using a generalized method of characteristics from \cite{GHR2}, which we sketch next.

To any triplet $(u,\mu,\nu)$ we can associate a quadruple of Lagrangian coordinates $(y,U,V,H)$, where $y(t,\xi)$ denotes the characteristics, $U(t,\xi)=u(t,y(t,\xi))$ the solution along the characteristics, $V_\xi(t,\xi)$ the energy of the particle $\xi$ at time $t$ and $H(t,\xi)$ is related to $\nu$. Introducing in addition, $P(t,\xi)=p(t,y(t,\xi))$ and $Q(t,\xi)= p_x(t,y(t,\xi))$, the time evolution of the Lagrangian variables is then governed by the following system of differential equations 
\begin{align}\label{int:diff}
y_t(t,\xi)&= U(t,\xi),\\ 
U_t(t,\xi)&=-Q(t,\xi),\\ 
H_t(t,\xi)&= (U^3-2PU)(t,\xi),\\ 
V(t,\xi)&= \int_{-\infty }^\xi  (1- \mathbbm{1}_{\{\zeta\mid \tau(\zeta)\leq t\}} (\eta))H_\xi(t,\eta)d\eta,
\end{align}
where
\begin{equation*}\tau(\xi)= \begin{cases} 0, &\quad y_\xi(0,\xi)=0,\\
\sup\{t\in \Real^+ \mid y_\xi(t',\xi)>0 \text{ for all } 0\leq t'<t\}, & \quad \text{ otherwise,}
\end{cases}
\end{equation*}
which is uniquely solvable. 

The loss of the energy is encoded in the variable $V_\xi(t,\xi)$. Indeed, along a characteristic labeled with $\xi$, wave breaking occurs at time $\tau(\xi)$, i.e., when $y_\xi(t, \xi)$ becomes zero. When this happens the energy of the particle $\xi$, given by $V_\xi(t,\xi)$, drops to zero and remains zero thereafter. Thus at wave breaking the maximal amount of energy is dissipated, which is why one also characterizes the dissipative solutions as those solutions which dissipate energy at the fastest possible rate. 

To prove the uniqueness of dissipative solutions the starting points is to identify properties and constraints, which are solely satisfied by this class of solutions. Beside of $u$ being a weak solution, the two most prominent properties are 
\begin{itemize}
\item $u$ satisfies a one-sided Lipschitz condition,
\item $\norm{u(t,\cdot)}_{H^1(\Real)}$ is non-increasing.
\end{itemize}
In \cite[Proposition 6.7]{HR} it has been proven that for almost every $x$ and all $t\geq 0$
\begin{equation}
u_x(t,x)\leq \frac{2}{t}+\sqrt{2}\norm{u_0}_{H^1(\Real)}.
\end{equation}
As a consequence the differential equation \eqref{int:char} can be solved uniquely backwards in time. Assuming that the backwards characteristics satisfy $y(T,\xi)=\xi$ for $\xi \in \Real$, it cannot be expected that 
\begin{equation*}
\{(t,y(t,\xi)) \mid (t,\xi)\in [0,T]\times \Real\}
\end{equation*}
covers all of $[0,T]\times \Real$, see e.g. \cite[Chapter 4]{BJ}. This phenomenon is closely related to the loss of energy. In particular, following characteristics backward in time has been used in \cite{BJ} to prove the uniqueness of weak solutions to transport equations like \eqref{int:trans}, where $u$ satisfies a one-sided Lipschitz condition. A result which also motivates why combining \eqref{CH1} and \eqref{int:trans} and introducing equivalence classes with respect to $\nu$ is favorable. On the other hand, inspired by \cite{D}, the authors in \cite{J} showed by following dissipative solutions backward along characteristics that they coincide with the solutions constructed in \cite{BC} and hence they must be unique. The argument is solely based on $u$ and \eqref{CH1} and does not involve measures $\nu$ at all, which complicates the proof.

In this work, we again prove the uniqueness of dissipative solutions to the CH equation, but our argumentation is not based on following solutions backward along characteristics. Instead, we use, inspired by \cite{BCZ, GH}, a carefully selected change of variables, which forces us to introduce equivalence classes with respect to $\nu$, together with a more exhaustive list of constraints, which relies on a good understanding of the dissipative solutions constructed in \cite{GHR2}, see Section \ref{sec:back}. While this seems to complicate the proof of the uniqueness of dissipative solutions at first glance, the opposite is the case. Since the dissipation is very well encoded in the measure differential equation for $\nu$, \eqref{int:trans}, we can immediately define a change of variables, which allows to deduce that each dissipative solution must satisfy \eqref{int:diff}, which is uniquely solvable, see Theorem~\ref{main} in Section~\ref{sec:unique}. Thereafter, in Section~\ref{subsec:equivcl}, it is shown that the wave profile $u$ is independent of $\nu$, see Lemma~\ref{lem:main2}, and hence unique. 

Finally, we want to point out that this approach, with slight modifications, can also be used to prove the uniqueness of dissipative solutions to the Hunter--Saxton (HS) equation. Another very elegant proof for the HS equation, which inspired \cite{J}, can be found in \cite{D}. 

\section{Characterizing dissipative solutions}\label{sec:back}
 
In this section, we introduce the concept of weak dissipative solutions $u$ for the Camassa--Holm equation. To be more precise, we will characterize dissipative solutions $u$ with the help of equivalence classes for pairs $(u,\nu)$. 
Thereafter, we will show that such pairs $(u,\nu)$ exist.

The following set serves as a basis for introducing equivalence classes.

\begin{definition}[Eulerian coordinates]\label{def:Euler}
The space $\D$ consists of all pairs $(u,\nu)$ such that 
\begin{itemize}
\item $u\in H^1_u(\Real)$,
\item $\nu\in\mathcal{M}^+(\Real)$,
\item $d\nu_{ac}=(u^2+u_x^2) dx$,
\end{itemize}
where $\mathcal{M}^+(\Real)$ denotes the set of positive, finite Radon measures on $\mathbb{R}$ and 
\begin{equation*}
H^1_u(\Real)=\{f\in H^1(\Real)\mid \text{there exists } D\in \Real \text{ such that } f'(x)\leq D \text{ for a.e. } x\in \Real\}.
\end{equation*}
\end{definition}

The measure $\nu$ is a {\it dummy} measure. By this we mean that it contains no additional information about the solution $u$. Therefore the choice of $\nu$ should have no influence on the solution $u$, but gives rise to equivalence classes.

\begin{definition}[Equivalence relation]\label{def:equivEuler}
We say that two elements $(u_1, \nu_1)$ and $(u_2, \nu_2)$ in $\D$ belong to the same equivalence class if $u_1=u_2$.
\end{definition}

As a consequence also the definition of weak dissipative solutions $u$ has to take these equivalence classes into account. We therefore introduce first all constraints, which are necessary and sufficient to guarantee the existence of pairs $(u, \nu)$, which are weak dissipative solutions to the Camassa--Holm equation in the following sense. 

\begin{definition}\label{def:dissol2}
We say that $(u,\nu)$ is a global weak dissipative solution of the Camassa--Holm equation with initial data $(u(0), \nu(0))\in \D$, if 
\begin{enumerate}
\item For each fixed $t\geq 0$ we have  $(u(t),\nu(t))\in \D$.
\item \label{cond:dissol2:2} For any $0\leq T_1<T_2$, the function $u:[T_1,T_2]\times \Real\to \Real$ is H{\"o}lder continuous with H{\"o}lder exponent one-half and the map $t\mapsto u(t,\cdot)$ is Lipschitz continuous from $[T_1,T_2]$ into $L^2(\Real)$. 
\item \label{cond:dissol2:22}There exists a constant $D>0 $ such that for each $t\geq 0$
\begin{equation}\label{oneside:Lip}
u_x(t,x)\leq D  \quad  \text{for almost every } x\in \Real.
\end{equation}
\item \label{cond:dissol2:3} The function $F(t,x): \Real^+\times \Real\to \Real$, given by 
\begin{equation}\label{def:F}
F(t,x)=\int_{-\infty}^x (u^2+u^2_x)(t,y)dy,
\end{equation}
satisfies
\begin{equation}\label{prop:lim:F}
F(s,y)\to F(t,x) \quad \text{ for all } (s,y)\to (t,x) \text{ such that } s\geq t.
\end{equation}
\item \label{cond:dissol2:4}The function $p(t,x):\Real^+\times \Real \to \Real$, given by 
\begin{equation}\label{def:p}
p(t,x)=\frac14 \int_{\mathbb{R}} e^{-\vert x-y\vert} (2u^2+u_x^2) (t,y) dy,
\end{equation}
satisfies 
\begin{equation}\label{cond:limp}
p(s,y)\to p(t,x) \quad \text{ for all } (s,y) \to (t,x) \text{ such that } s\geq t
\end{equation}
and 
\begin{equation}\label{cond:limpx}
p_x(s,y)\to p_x(t,x) \quad \text{ for all } (s,y) \to (t,x) \text{ such that } s\geq t.
\end{equation}
\item \label{cond:dissol2:5} For any $0\leq T_1< T_2$, the function $p(t,\sigma(t))$, where $\sigma:[T_1, T_2]\to\Real$ is Lipschitz continuous with Lipschitz constant $\hat L$, is a function of bounded variation on $[T_1, T_2]$ and 
\begin{equation*}
T.V. (p(t,\sigma(t)))\leq B(\hat L),
\end{equation*}
where $B(\hat L)$ denotes a constant which is only dependent on $\hat L$.
\item \label{cond:dissol2:7} For any $0\leq T_1< T_2$, the function $u$ satisfies for any $\phi\in C_c^\infty([T_1, T_2]\times\Real)$
\begin{equation}\label{weak:u}
\int_{T_1}^{T_2} \int_\Real \big(u\phi_t+\frac12 u^2\phi_x-p_x\phi\big)(t,x)dx dt=\int_{\Real} u\phi(T_2, x) dx-\int_\Real u\phi(T_1,x) dx.
\end{equation} 
\item \label{cond:dissol2:8} For any $0\leq T_1<T_2$, the pair $(u,\nu)$ satisfies for any $\phi\in C_c^\infty ([T_1,T_2]\times \Real)$
\begin{align}\nonumber
\int_{T_1}^{T_2} \int_\Real (\phi_t+u\phi_x)(t,x)d\nu(t)dt& -\int_{T_1}^{T_2} \int_\Real (u^3-2pu)\phi_x(t,x) dx dt \\ \label{weak:trans}
&\qquad\quad   = \int_\Real \phi(T_2,x)d\nu(T_2)-\int_\Real \phi(T_1,x)d\nu(T_1).
\end{align}
\item The family of Radon measures $\{\nu(t)\mid t\in \Real^+\}$ depends continuously on time w.r.t. the topology of weak convergence of measures. 
\end{enumerate}
\end{definition}

Note that $\{\nu(t)\mid t\in \Real^+\}$ provides a measure valued solution $w$ to the linear transport equation 
\begin{equation*}
w_t+(uw)_x=(u^3-2pu)_x.
\end{equation*}
Since $u(t,\cdot)$ and hence also $p(t,\cdot)$ belong to $H^1(\Real)$ for all $t\geq0$, one has $\nu(t,\Real)=\nu(0,\Real)$ for all $t\geq 0$. Furthermore, 
\begin{equation}\label{est:uinf}
\norm{u(t,\cdot)}_\infty^2\leq \int_\Real (u^2+u_x^2)(t,x)dx \leq \nu(t,\Real)= \nu(0,\Real)
\end{equation}
and  
\begin{equation}\label{est:pxpinf}
\norm{p_x(t,\cdot)}_\infty\leq \norm{p(t,\cdot)}_\infty\leq  \nu(t,\Real)= \nu(0,\Real).
\end{equation}

In \cite{GHR2}, weak dissipative solutions $(u,\nu)$ have been constructed. A closer look at the construction therein reveals that the following theorem holds.

\begin{theorem}
For any initial data $(u(0),\nu(0))\in \D$, the Camassa--Holm equation has a global weak dissipative solution $(u,\nu)$ in the sense of Definition~\ref{def:dissol2}.
\end{theorem}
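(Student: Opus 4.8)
The statement asserts existence of a global weak dissipative solution $(u,\nu)$ for arbitrary initial data in $\D$, and it is flagged in the excerpt as a consequence of "a closer look at the construction" in \cite{GHR2}. Accordingly, the plan is not to build a solution from scratch but to \emph{translate} the Lagrangian existence theory of \cite{GHR2} into the Eulerian framework of Definition~\ref{def:dissol2}, checking each of the nine conditions in turn. First I would recall the construction in \cite{GHR2}: given $(u(0),\nu(0))\in\D$, one maps it to Lagrangian coordinates $(y,U,V,H)(0)$ in the appropriate Banach space, solves the system \eqref{int:diff} globally in time (this is the part already known to be uniquely solvable, with $\tau$ the wave-breaking time), and maps back to Eulerian variables $(u,\mu,\nu)$ via the standard projection $u(t,y(t,\xi))=U(t,\xi)$, $\nu(t)\circ y = H_\xi(t,\cdot)\,d\xi$ pushed forward, and likewise for $\mu$. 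One must first verify that this Eulerian object is well-defined, i.e.\ independent of the choice of Lagrangian representative within the relabeling group, which is exactly the content of the relabeling invariance established in \cite{GHR2}.

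The core of the proof is then a condition-by-condition verification. Conditions (1) and the $H^1$-regularity in (2) follow directly from the mapping-back lemmas of \cite{GHR2}: $u(t,\cdot)\in H^1(\Real)$ with the energy bound $\int(u^2+u_x^2)\le\nu(t,\Real)=\nu(0,\Real)$, and $d\nu_{ac}(t)=(u^2+u_x^2)(t)\,dx$. The one-sided Lipschitz bound (3) is the key dissipative feature: I would derive it from the Lagrangian relation $u_x(t,y(t,\xi)) = U_\xi/y_\xi$ where it exists, using the ODE for $y_\xi, U_\xi$ to get a Riccati-type inequality showing $U_\xi/y_\xi \le 2/t + \sqrt2\,\norm{u_0}_{H^1}$, exactly as in \cite[Prop.~6.7]{HR}; crucially the dissipation (resetting of energy at $\tau(\xi)$) is what prevents $y_\xi$ from hitting zero again and keeps the bound valid for all $t$. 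The H\"older continuity in (2), the one-sided continuity of $F$ in (4) and of $p,p_x$ in (5) (note the paper's numbering: (5) is the BV estimate, (4) the $p$-limits --- I will cite the exact items), and the BV estimate along Lipschitz curves, all follow by standard estimates: $F$ and $p$, $p_x$ are expressed as integrals against $\nu$ composed with the Lipschitz-in-time characteristics, and the \emph{one-sidedness} $s\ge t$ is dictated by the fact that $\nu$ can only lose mass (on the ac part) as wave breaking occurs, so the relevant quantities are right-continuous but may jump down from the left. The weak formulations \eqref{weak:u} and \eqref{weak:trans} for $u$ and for $\nu$ are obtained by testing the Lagrangian ODEs against $\phi(t,y(t,\xi))y_\xi(t,\xi)$ and integrating by parts, converting Lagrangian integrals back to Eulerian ones via the change of variables $x=y(t,\xi)$; this is the standard derivation of the weak CH form and of the transport equation for $\nu$, and here \eqref{int:trans} is built into $H_t=(U^3-2PU)$ together with the formula for $V$. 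The last condition, weak continuity in time of $\nu(t)$, follows from continuity in time of the Lagrangian variables $(y,H)(t,\cdot)$ in the relevant norm, since for fixed $\psi\in C_c(\Real)$ the pairing $\int\psi\,d\nu(t)=\int\psi(y(t,\xi))H_\xi(t,\xi)\,d\xi$ is continuous in $t$.

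I expect the main obstacle to be condition (3) together with its interplay with conditions (4)--(5): one must show the one-sided Lipschitz constant $D$ can be taken \emph{uniform in $t\ge0$} (so a single $D$ works for all times), and that the one-sided limit properties of $F$, $p$, $p_x$ hold \emph{globally}, i.e.\ across wave-breaking times where $\nu$ genuinely drops. This requires careful bookkeeping of when characteristics break, using that $\tau(\xi)$ partitions label space and that after breaking the particle $\xi$ carries zero energy forever, so the "lost" mass never re-enters; the monotone (non-increasing) behavior of $t\mapsto\int(u^2+u_x^2)(t,x)\,dx$ is the clean way to package this. The rest of the verification, while lengthy, is routine given the machinery of \cite{GHR2}, so I would present it by stating the relevant mapping-back and relabeling lemmas from that paper and then checking each condition in a short paragraph, with the dissipation-specific estimates (the Riccati bound for (3) and the monotonicity of the energy for (4)) spelled out in full.
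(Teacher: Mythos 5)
Your overall strategy is the same as the paper's: the theorem is established by taking the Lagrangian construction of \cite{GHR2} as given and verifying the conditions of Definition~\ref{def:dissol2} one by one, with conditions \eqref{cond:dissol2:22}--\eqref{cond:dissol2:5} singled out as the ones requiring genuine work. Your treatment of the remaining conditions (the mapping back to Eulerian variables, relabeling invariance, the weak formulations, weak-$*$ continuity of $\nu$) matches the paper's.

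There is, however, one concrete misstep. For the one-sided Lipschitz condition \eqref{cond:dissol2:22} you propose to conclude $u_x(t,x)\le 2/t+\sqrt2\norm{u_0}_{H^1}$ as in \cite[Prop.~6.7]{HR}. That bound blows up as $t\downarrow 0$ and therefore does \emph{not} produce a single constant $D$ valid for all $t\ge 0$, which is what \eqref{cond:dissol2:22} demands (and what the later uniqueness argument, e.g.\ \eqref{cont:cond3}, actually uses). The paper instead exploits that $\D$ is built on $H^1_u(\Real)$, so $u_{0,x}\le D_0$ a.e.\ from the start, and runs the Riccati comparison for $\alpha=U_\xi/y_\xi$ in \eqref{ux:diff} to get $\alpha(t)\le\max(\alpha(0),2\sqrt C)$ with $C$ from \eqref{Lag:C}; the initial one-sided bound is essential and your proposal never invokes it. You do mention the Riccati mechanism, so this is fixable, but as written the stated conclusion does not prove the condition. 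Separately, you dismiss the bounded-variation estimate \eqref{cond:dissol2:5} as ``standard''; in the paper this is the longest verification, requiring the decomposition into $S_1+S_2+S_3$, the growth bound $(y_\xi+H_\xi)(s,\cdot)\le e^{C(s-t)}(y_\xi+H_\xi)(t,\cdot)$, and---critically---the observation that the non-Lipschitz contribution to the variation of $P_2$ telescopes over the partition into a single integral over $\{\zeta\mid T_1<\tau(\zeta)\le T_2\}$ controlled by the total energy. That telescoping over wave-breaking times is the actual content of \eqref{cond:dissol2:5} and should be spelled out rather than asserted.
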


In other words, all the properties stated in Definition~\ref{def:dissol2} are satisfied by the dissipative solutions constructed in \cite{GHR2}. However, some of them are better hidden than others. This is especially true for Definition~\ref{def:dissol2} \eqref{cond:dissol2:22}--\eqref{cond:dissol2:5}, which we will show at the end of this Section.  

Finally, we can define weak dissipative solutions $u$ to the Camassa--Holm equation by using the equivalence relation from Definition~\ref{def:equivEuler}. 

\begin{definition}\label{def:dissol}
We say that $u$ is a weak dissipative solution of the Camassa--Holm equation with initial data $u(0)\in H^1_u(\Real)$ if 
\begin{enumerate}
\item $u(t,\cdot)\in H^1_u(\Real)$ for all $t\geq 0$.
\item\label{cond:dissol:2} For any $(u_1(0), \nu_1(0))$ and $(u_2(0), \nu_2(0))$ in $\D$, such that 
\begin{equation*}
u_1(0)=u(0)=u_2(0), 
\end{equation*}
the corresponding weak solutions $(u_1, \nu_1)$ and $(u_2, \nu_2)$ satisfy
\begin{equation*}
u_1(t)=u(t)=u_2(t) \quad \text{ for all } t \geq 0.
\end{equation*}
 \end{enumerate} 
\end{definition}

With the definition of a weak dissipative solution $u$ in place, we can turn our attention to showing that the dissipative solutions $(u,\nu)$ constructed in \cite{GHR2} satisfy Definition~\ref{def:dissol2} \eqref{cond:dissol2:22}--\eqref{cond:dissol2:5}. 

We start by outlining the construction of weak dissipative solutions from \cite{GHR2}, which is based on a generalized method of characteristics and hence involves a change of variables from Eulerian coordinates $(u,(u^2+u_x^2)dx, \nu)$ to Lagrangian coordinates $(y,U,V,H)$.

\begin{definition}[Lagrangian coordinates]\label{def:Lagcoord} The set $\F$ consists of all tuples $(y,U,V,H)$ such that 
\begin{enumerate}
\item $U\in L^2(\Real)$,
\item $(y-\id, U, V, H)\in [W^{1,\infty}(\Real)]^4$,
\item $(y_\xi-1, U_\xi, V_\xi, H_\xi)\in [L^2(\Real)]^4$,
\item $\displaystyle\lim_{\xi\to -\infty} V(\xi)=0= \lim_{\xi\to -\infty} H(\xi)$,
\item $y_\xi\geq 0$, $H_\xi\geq V_\xi\geq 0$ a.e.,
\item there exists $c>0$ such that $y_\xi+H_\xi\geq c>0$ a.e.,
\item \label{cond:Lagcoord7} $U^2y_\xi^2+U_\xi^2=y_\xi V_\xi$ a.e., 
\item $y_\xi(\xi)=0$ implies $V_\xi(\xi)=0$ a.e..
\end{enumerate}
\end{definition}

Note that there cannot be a one-to-one correspondence between Eulerian and Lagrangian coordinates, since we have triplets $(u, (u^2+u_x^2)dx, \nu)$ which are related to quadruplets $(y,U,V,H)$. Instead, each element in Eulerian coordinates corresponds to a whole equivalence class of elements in Lagrangian coordinates. These equivalence classes can be identified using so-called relabeling functions.

\begin{definition}[Relabeling functions]\label{def:rel} We denote by $\G$ the group of homeomorphisms $f$ from $\Real$ to $\Real$ such that 
\begin{align*}
f-\id \text{ and }& f^{-1}-\id \text{ both belong to } W^{1,\infty}(\Real),\\
&f_\xi-1 \text{ belongs to } L^2(\Real),
\end{align*}
where $\id$ denotes the identity function.
\end{definition}

Whether or not a function is a relabeling function can be checked using the following lemma, which is taken from \cite{HR2}. 

\begin{lemma}[Identifying relabeling functions]\label{rellem} If $f$ is absolutely continuous, $f-\id\in W^{1,\infty}(\Real)$, $f_\xi-1\in L^2(\Real)$, and there exists $c\geq 1$ such that $\frac{1}{c}\leq f_\xi\leq c$ almost everywhere, then $f\in \G$.
\end{lemma}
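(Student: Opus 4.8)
The statement to prove is Lemma~\ref{rellem}: if $f$ is absolutely continuous, $f-\id\in W^{1,\infty}(\Real)$, $f_\xi-1\in L^2(\Real)$, and $\frac1c\le f_\xi\le c$ a.e.\ for some $c\ge 1$, then $f\in\G$, i.e.\ $f$ is a homeomorphism of $\Real$ with $f-\id, f^{-1}-\id\in W^{1,\infty}(\Real)$ and $f_\xi-1\in L^2(\Real)$. The last of these three properties is a hypothesis, so the work is to show $f$ is a homeomorphism and that $f^{-1}-\id$ lies in $W^{1,\infty}(\Real)$; the condition $f-\id\in W^{1,\infty}(\Real)$ is also assumed. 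The plan is to exploit the two-sided bound on $f_\xi$, which makes $f$ bi-Lipschitz.

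First I would show $f$ is a bijection of $\Real$ onto $\Real$. Since $f$ is absolutely continuous with $f_\xi\ge \frac1c>0$ a.e., for $\xi_1<\xi_2$ we have $f(\xi_2)-f(\xi_1)=\int_{\xi_1}^{\xi_2}f_\xi\,d\xi\ge \frac1c(\xi_2-\xi_1)>0$, so $f$ is strictly increasing, hence injective and continuous, and the same integral identity with the upper bound gives $f(\xi_2)-f(\xi_1)\le c(\xi_2-\xi_1)$, so $f$ is Lipschitz with constant $c$. Letting $\xi_2\to+\infty$ and $\xi_1\to-\infty$ shows $f(\xi)\to\pm\infty$, so by the intermediate value theorem $f$ is onto $\Real$; thus $f$ is a continuous strictly increasing bijection, hence a homeomorphism, and $g:=f^{-1}$ is continuous and strictly increasing as well. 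From $\frac1c(\xi_2-\xi_1)\le f(\xi_2)-f(\xi_1)\le c(\xi_2-\xi_1)$, substituting $\xi_i=g(x_i)$ gives $\frac1c|x_1-x_2|\le|g(x_1)-g(x_2)|\le c|x_1-x_2|$, so $g$ is Lipschitz with constant $c$ and in particular absolutely continuous; by Lipschitz differentiability a.e.\ and the inverse function relation, $g_x=1/f_\xi(g(x))\in[\frac1c,c]$ a.e.

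Next I would check $g-\id\in W^{1,\infty}(\Real)$. For the $L^\infty$ bound on $g-\id$: write $h:=f-\id$, so $h\in L^\infty$ with $\|h\|_\infty=:M$, and for any $x$, setting $\xi=g(x)$ we get $x=f(\xi)=\xi+h(\xi)$, hence $g(x)-x=\xi-f(\xi)=-h(\xi)$, so $\|g-\id\|_\infty=\|h\|_\infty=M<\infty$. For the derivative: $g-\id$ is Lipschitz (difference of two Lipschitz functions), hence in $W^{1,\infty}(\Real)$ with $(g-\id)_x=g_x-1\in L^\infty$. This establishes $g=f^{-1}$ satisfies $f^{-1}-\id\in W^{1,\infty}(\Real)$. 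Collecting the three bullet conditions in Definition~\ref{def:rel}—$f-\id\in W^{1,\infty}$ (hypothesis), $f^{-1}-\id\in W^{1,\infty}$ (just shown), $f_\xi-1\in L^2$ (hypothesis)—together with the homeomorphism property, concludes $f\in\G$.

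I do not expect a genuine obstacle here; the argument is a routine bi-Lipschitz bookkeeping exercise. The only mild subtlety worth stating carefully is the a.e.\ identity $g_x(x)=1/f_\xi(g(x))$: one justifies it either by the change-of-variables/Lipschitz-inverse theorem, or simply notes that it is not strictly needed—Lipschitzness of $g-\id$ alone already places it in $W^{1,\infty}(\Real)$, which is all Definition~\ref{def:rel} requires. One should also remark that $g_\xi-1\in L^2$ is \emph{not} part of the definition of $\G$ (only $f_\xi-1\in L^2$ is), so no further integrability argument for the inverse is needed.
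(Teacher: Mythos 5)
Your proof is correct; note that the paper itself does not prove this lemma but quotes it from \cite{HR2}, where the argument is the same routine bi-Lipschitz one you give (two-sided bound on $f_\xi$ yields a strictly increasing, surjective, bi-Lipschitz map, hence a homeomorphism with Lipschitz and bounded $f^{-1}-\id$). Your closing remark is also accurate: as Definition~\ref{def:rel} is literally stated, only $f_\xi-1\in L^2(\Real)$ is required, so no integrability of $(f^{-1})_\xi-1$ is needed — though it would in any case follow from the change of variables $\int\abs{(f^{-1})_x-1}^2dx=\int\abs{1-f_\xi}^2f_\xi^{-1}d\xi\leq c\norm{f_\xi-1}_{L^2}^2$.
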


Let $X_1=(y_1, U_1, V_1, H_1)$ and $X_2=(y_2, U_2, V_2, H_2)$ in $\F$. Then $X_1$ and $X_2$ belong to the same equivalence class if there exists a relabeling function $f\in \G$ such that 
\begin{equation*}
X_1\circ f=(y_1\circ f, U_1\circ f, V_1\circ f, H_1\circ f)=(y_2, U_2, V_2, H_2)=X_2.
\end{equation*}
Furthermore, let 
\begin{equation*}
\F_0=\{(y,U,V,H)\in \F\mid y+H=\id\}.
\end{equation*}
Then $\F_0$ contains exactly one representative of each equivalence class in $\F$. Moreover, one has that if $X=(y,U,V,H)\in \F_0$ and $f\in \G$, then 
\begin{equation*}
y\circ f+H\circ f= f.
\end{equation*}
This implies that for each $X=(y,U,V,H)\in \F$, one has $y+H\in \G$.

In \cite{GHR2}, one rewrites the Camassa--Holm equation in Lagrangian coordinates as the following semilinear system of differential equations with discontinuous right hand side,
\begin{subequations}\label{sys:diffeq}
\begin{align}\label{sys:diffeq1}
y_t(t,\xi)&= U(t,\xi),\\ \label{sys:diffeq2}
U_t(t,\xi)&=-Q(t,\xi),\\ \label{sys:diffeq3}
H_t(t,\xi)&= (U^3-2PU)(t,\xi),\\ \label{sys:diffeq4}
V(t,\xi)&= \int_{-\infty }^\xi  (1- \mathbbm{1}_{\{\zeta\mid \tau(\zeta)\leq t\}} (\eta))H_\xi(t,\eta)d\eta,
\end{align}
\end{subequations}
where
\begin{equation}\label{def:tau}
\tau(\xi)= \begin{cases} 0, &\quad y_\xi(0,\xi)=0,\\
\sup\{t\in \Real^+ \mid y_\xi(t',\xi)>0 \text{ for all } 0\leq t'<t\}, & \quad \text{ otherwise,}
\end{cases}
\end{equation}
\begin{equation}\label{def:Pb}
P(t,\xi)= \frac14 \int_{\Real} e^{-\vert y(t,\xi)-y(t,\eta)\vert} (U^2y_\xi+V_\xi)(t,\eta)d \eta,
\end{equation}
and
\begin{equation}\label{def:Qb}
Q(t,\xi)=-\frac14 \int_{\Real} \mathrm{sign}(\xi-\eta)e^{-\vert y(t,\xi)-y(t,\eta)\vert}(U^2y_\xi+V_\xi)(t,\eta) d\eta,
\end{equation}
respectively.

The function $\tau(\xi)$ in the above system associates to each $\xi$ the time at which wave breaking occurs along the characteristic $y(t,\xi)$. The function $V_\xi(t,\xi)$, on the other hand, corresponds to the energy of the particle $\xi$ at time $t$, which by definition vanishes from the system at time $\tau(\xi)$, as the differentiated version of \eqref{sys:diffeq} reveals, i.e., 
\begin{subequations}\label{sys:diffeqxi}
\begin{align} \label{sys:diffeqxi1}
y_{\xi,t}(t,\xi)&= U_\xi(t,\xi),\\\label{sys:diffeqxi2}
U_{\xi,t}(t,\xi)&= \frac12 (V_\xi+(U^2-2P)y_\xi)(t,\xi),\\ \label{sys:diffeqxi3}
H_{\xi,t}(t,\xi)&=((3U^2-2P)U_\xi-2QUy_\xi)(t,\xi),\\ \label{sys:diffeqxi4}
V_\xi(t,\xi)&= (1-\mathbbm{1}_{\{\zeta\mid \tau(\zeta)\leq t\}}(\xi))H_\xi(t,\xi).
\end{align}
\end{subequations}

That the system \eqref{sys:diffeq}--\eqref{sys:diffeqxi} is uniquely solvable in $\F$ has been shown, in \cite{GHR2}, by considering the corresponding system of integral equations. Important properties of the solutions, which play a key role for the remainder of this section, are the Lagrangian formulations of \eqref{est:uinf} and \eqref{est:pxpinf}, i.e.,
\begin{equation}\label{H:ind}
\lim_{\xi\to\infty}H(t,\xi)=\lim_{\xi\to \infty}H(0,\xi)=H_\infty,
\end{equation}
and there exists a constant $C>0$, only dependent on $H_\infty$, such that 
\begin{equation}\label{Lag:C}
\norm{U(t,\cdot)}_\infty, \norm{Q(t,\cdot)}_\infty, \norm{P(t,\cdot)}_\infty\leq C \quad \text{ for all } t\geq 0.
\end{equation}

Finally it remains to introduce the mappings between Eulerian and Lagrangian coordinates.

\begin{definition}\label{def:EultoLag}
Let the mapping $L:\D\to \F_0$ be defined by $L((u,\nu))= (y,U,V,H)$, where 
\begin{align*}
y(\xi)&=\sup\{x\mid x+\nu((-\infty,x))<\xi\},\\
H(\xi)&=\xi-y(\xi),\\
U(\xi)&= u(y(\xi)),\\
V(\xi)& = \int_{-\infty}^{y(\xi)} (u^2+u_x^2)(z) dz= \int_{-\infty }^{\xi } \mathbbm{1}_{\{\zeta\mid y_\xi(\zeta)\not=0\}}H_\xi(\eta)  d\eta.
\end{align*}
\end{definition}

\begin{definition}
Let the mapping $M:\F\to \D$ be defined by $M((y,U,V,H))= (u,\nu)$, where
\begin{align*}
u(x)=U(\xi) & \text{ for any } \xi\text{ such that }x=y(\xi),\\
(u^2+u_x^2) dx& = y_{\#}(V_\xi d\xi),\\
\nu&= y_{\#}(H_\xi d\xi).
\end{align*}
\end{definition}

Now we are ready to show that the weak dissipative solutions constructed in \cite{GHR2} satisfy Definition~\ref{def:dissol2} \eqref{cond:dissol2:22}--\eqref{cond:dissol2:5}.

\subsection{$u_x$ satisfies Definition~\ref{def:dissol2} \eqref{cond:dissol2:22}}
For $ t\geq 0$ define
\begin{equation*}
B(t)=\{\xi \in \Real \mid y_\xi( t, \xi), U_\xi( t, \xi) \text{ are differentiable and } y_\xi( t, \xi)>0\}. 
\end{equation*}
Then $y( t,B(t))$ has full measure and for almost every $x\in \Real$ there exists $\xi$ such that $y(t,\xi)=x$, $y_\xi( t,\xi)>0$ and $U_\xi( t,\xi)=u_x( t,y( t,\xi))y_\xi( t,\xi)$. Furthermore, a closer look at the system \eqref{sys:diffeqxi} reveals that $y_\xi(\tilde t,\xi)=0$ for some $(\tilde t,\xi)$, implies $y_\xi(t,\xi)=U_\xi(t,\xi)=V_\xi(t,\xi)=0$ and $H_\xi(t,\xi)=H_\xi(\tilde t, \xi)$ for all $t\geq \tilde t$. That means, if $\xi \in B(0)$, then the function
\begin{equation*}
\alpha(t,\xi)=u_x(t,y(t,\xi))= \frac{U_\xi(t,\xi)}{y_\xi(t,\xi)}
\end{equation*}
is well-defined for all $t$ such that $0\leq t<\tau(\xi)$. In addition, $\alpha$ satisfies the differential equation
\begin{equation}\label{ux:diff}
\alpha_t+\frac12 \alpha^2=(U^2-P),
\end{equation}
where the right hand side can be uniformly bounded by a constant $C>0$, cf. \eqref{Lag:C}. 
Thus 
\begin{equation*}
\alpha_t \leq  -\frac14 \alpha^2+C= (\sqrt{C}-\frac12\vert \alpha\vert )(\sqrt{C}+ \frac12\vert \alpha\vert ),
\end{equation*}
where the right hand side is negative for $\vert \alpha\vert > 2\sqrt{C}$ and therefore any solution to \eqref{ux:diff}, must satisfy 
\begin{equation*}
\alpha(t)\leq \max (\alpha(0),  2\sqrt{C}).
\end{equation*}

\subsection{$F$ satisfies Definition~\ref{def:dissol2} \eqref{cond:dissol2:3}} By definition $F(t,x)$ satisfies 
\begin{equation}\label{rel:FV}
F(t,y(t,\xi))=V(t,\xi)= \int_{-\infty}^\xi V_\xi(t,\eta)d\eta
\end{equation}
and by \eqref{sys:diffeqxi}
\begin{equation}\label{lim:Vxi}
\lim_{s\downarrow t} V_\xi(s,\xi)=V_\xi(t,\xi) \quad \text{ for a.e. } \xi \in \Real.
\end{equation}
Furthermore, it can be established, using Definition~\ref{def:Lagcoord} and \eqref{sys:diffeqxi}, that there exists a function $g\in L^1(\Real)$ such that for every $l\in [t,s]$
\begin{equation*}
0\leq V_\xi(l,\xi)\leq H_\xi(l,\xi)\leq g(\xi) \quad \text{ for almost every } \xi \in \Real.
\end{equation*}
Thus the dominated convergence theorem yields 
\begin{equation*}
F(s, y(s,\xi))\to F(t,y(t,\xi)) \quad \text{ for all } s\to t \text{ such that } s\geq t.
\end{equation*}

Since $F(t,\cdot)$ is absolutely continuous for every $t$ and $y(t,\xi)$ is continuous with respect to both space and time, \eqref{prop:lim:F} follows.

\subsection{$p$ and $p_x$ satisfy Definition~\ref{def:dissol2} \eqref{cond:dissol2:4}}
Comparing \eqref{def:p}, \eqref{def:Pb}, and \eqref{def:Qb}, we observe that 
\begin{equation}\label{rel:pPpxQ}
p(t,y(t,\xi))=P(t,\xi) \quad \text{ and } \quad p_x(t,y(t,\xi))=Q(t,\xi),
\end{equation} 
which implies, since $y(t,\xi)$, $y_\xi(t,\xi)$, and $U(t,\xi)$ are continuous with respect to time and \eqref{lim:Vxi} holds, that 
\begin{equation}\label{lim:p2}
p(s,y(s,\xi))\to p(t,y(t,\xi)) \quad \text{and} \quad p_x(s,y(s,\xi))\to p_x(t,y(t,\xi))
\end{equation}
for all $s\to t$ such that $s\geq t$.

Furthermore, $y(t,\xi)$ is continuous with respect to time and space and hence \eqref{cond:limp} and \eqref{cond:limpx} will be satisfied, if we can show that for each $t\geq 0$ the functions $p(t,\cdot)$ and $p_x(t,\cdot)$ are continuous.

The function $p(t,\cdot)$, given by \eqref{def:p}, belongs to $H^1(\mathbb{R})$ and hence is continuous. As far as $p_x(t,\cdot)$ is concerned, observe that $u(t,\cdot)\in H^1(\Real)$ and \eqref{prop:lim:F} imply that the function
\begin{equation*}
\hat F(t, x)= \int_{-\infty}^x (2u^2+u_x^2) (t,y) dy= \int_{-\infty}^x u^2(t,y) dy + F(t,x)
\end{equation*}
is uniformly bounded and continuous with respect to $x$ for each fixed $t\geq 0$. Thus rewriting $p_x(t,x)$ as 
\begin{align*}
p_x(t,x)& = -\frac12 \int_{-\infty}^x e^{y-x}(2u^2+u_x^2) (t,y) dy+ p(t,x)\\
&= -\frac12 \hat F(t,x)+\frac12 \int_{-\infty}^x e^{y-x} \hat F(t,y)dy+p(t,x)
\end{align*}
finishes the proof of \eqref{cond:limpx}.

\subsection{$p$ satisfies Definition~\ref{def:dissol2} \eqref{cond:dissol2:5}}
Given a Lipschitz continuous curve $\sigma:[T_1,T_2]\to \Real$ with Lipschitz constant $\hat L$, we must show that 
\begin{equation*}
T.V. (p(t,\sigma(t))):= \sup \sum_{i} \vert p(t_i, \sigma(t_i))-p(t_{i-1}, \sigma(t_{i-1}))\vert,
\end{equation*}
where the supremum is taken over all finite partitions $\{t_i\}$ of $[T_1, T_2]$ such that $t_i<t_{i+1}$, is finite.

Let $\{t_i\}$ be a finite partition of $[T_1, T_2]$. Then to every $i$ there exists a $\xi_i\in \Real$, not necessarily unique, such that $(t_i, \sigma(t_i))=(t_i, y(t_i, \xi_i))$ and using the triangle inequality, we have
\begin{align*}
\sum_i \vert p(t_i, \sigma(t_i))-p(t_{i-1}, \sigma(t_{i-1}))\vert 
& \leq \sum_i \vert p(t_i, y(t_i, \xi_i))-p(t_{i-1}, y(t_{i-1}, \xi_i))\vert\\
& \quad  + \sum_i \vert p(t_{i-1}, y(t_{i-1}, \xi_i))-p(t_{i-1}, y(t_i, \xi_i))\vert\\
& \quad + \sum_{i} \vert p(t_{i-1}, \sigma(t_i))-p(t_{i-1}, \sigma(t_{i-1}))\vert\\
& = S_1+S_2+S_3.
\end{align*}

When deriving next an upper bound for each of the sums $S_1$, $S_2$, and $S_3$, which is independent of the partition $\{t_i\}$, \eqref{sys:diffeq}, \eqref{sys:diffeqxi}, and \eqref{Lag:C} will play a major role. In addition, we will from now on denote by $C$ positive constants, which are independent of $t$ and  which might change from line to line.

{\it Estimate for $S_1$:}  We split $p(t,y(t,\xi))$ as follows 
\begin{align*}
p(t,y(t,\xi))&= \frac14 \int_\Real e^{-\vert y(t,\xi)-y(t,\eta)\vert} U^2y_\xi(t,\eta) d\eta+ \frac14 \int_\Real e^{-\vert y(t,\xi)-y(t,\eta)\vert} V_\xi(t,\eta) d\eta\\
& =P_1(t,\xi)+P_2(t,\xi).
\end{align*}

To estimate $P_1$, observe that for $s\geq t$, by \eqref{sys:diffeq1} and \eqref{Lag:C},
\begin{equation}\label{est:yts}
\vert y(s,\xi)-y(t,\xi)\vert \leq \int_t^s \vert U(l,\xi)\vert dl\leq C\vert s-t\vert.
\end{equation}
Furthermore, Definition~\ref{def:Lagcoord} \eqref{cond:Lagcoord7} combined with \eqref{sys:diffeqxi} implies that 
\begin{equation*}
(y_\xi+H_\xi)_t(t,\xi)\leq C(y_\xi+H_\xi)(t,\xi),
\end{equation*}
and, applying Definition~\ref{def:Lagcoord} once more, we have for all $s\geq t$
\begin{equation}\label{est:growHxyx}
\vert U_\xi(s,\xi)\vert \leq(y_\xi+V_\xi)(s,\xi)\leq (y_\xi+H_\xi)(s,\xi)\leq e^{C(s-t)}(y_\xi+H_\xi)(t,\xi).
\end{equation}
Thus, using \eqref{Lag:C} and \eqref{est:growHxyx}, 
\begin{align*}
\vert U^2y_\xi(s,\xi)-U^2y_\xi(t,\xi)\vert &\leq \int_t^s \vert -2QUy_\xi+U^2U_\xi\vert (l,\xi) dl\\
& \leq C\int_t^s (y_\xi+H_\xi)(l, \xi )dl\\
& \leq Ce^{ C(s-t)} (y_\xi+H_\xi)(t, \xi) \vert s-t\vert.
\end{align*}
Recalling \eqref{est:yts}, we therefore end up with 
\begin{align}\nonumber
\vert P_1(s,\xi)-P_1(t,\xi)\vert& \leq \frac14 \int_\Real \vert e^{-\vert y(s,\xi)-y(s,\eta)\vert } - e^{-\vert y(t,\xi)-y(t,\eta)\vert}\vert U^2y_\xi(s,\eta) d\eta\\ \nonumber
& \quad + \frac14\int_\Real e^{-\vert y(t,\xi)-y(t,\eta)\vert} \vert U^2y_\xi(s,\eta)-U^2y_\xi(t,\eta)\vert d\eta\\ \nonumber
& \leq  C \int_\Real U^2y_\xi(s,\eta)d\eta\vert s-t\vert\\ \nonumber
& \quad +  C e^{ C(T_2-T_1)} \int_\Real e^{-\vert y(t,\xi)-y(t,\eta)\vert} (y_\xi+H_\xi)(t, \eta ) d\eta\vert s-t\vert\\ \nonumber
& \leq  C H_\infty \vert s-t\vert+ C e^{C(T_2-T_1)}\left(1+ H_\infty \right)\vert s-t\vert \\ \label{est:P1}
& \leq Ce^{C(T_2-T_1)}\vert s-t\vert.
\end{align}
Here we also used Definition~\ref{def:Lagcoord} \eqref{cond:Lagcoord7} and \eqref{H:ind}.

For $P_2$, the following inequality, which holds for any $s\geq t$, plays a key role
\begin{equation*}
\vert V_\xi(t,\xi)-V_\xi(s,\xi)\vert \leq \vert H_\xi(t,\xi)-H_\xi(s,\xi)\vert + (y_\xi+H_\xi)(s,\xi) \mathbbm{1}_{\{\zeta\mid t<\tau(\zeta)\leq s\}}(\xi).
\end{equation*}
Combined with \eqref{sys:diffeqxi3}, \eqref{Lag:C}, \eqref{est:yts}, and \eqref{est:growHxyx}, the last inequality implies that
\begin{align*}
\vert P_2(s,\xi)-P_2(t,\xi)\vert&\leq \frac14 \int_\Real \vert  e^{-\vert y(s,\xi)-y(s,\eta)\vert} - e^{-\vert y(t,\xi)-y(t,\eta)\vert} \vert H_\xi(s,\eta) d\eta\\
& \quad +\frac14\int_{\Real } e^{-\vert y(t,\xi)-y(t,\eta)\vert} \vert H_\xi(s,\eta)-H_\xi(t,\eta) \vert d\eta\\
& \quad +\frac14 \int_{\Real } e^{-\vert y(t,\xi)-y(t,\eta)\vert }(y_\xi+H_\xi)(s,\eta) \mathbbm{1}_{\{\zeta \mid t<\tau(\zeta)\leq s\}}(\eta) d\eta\\
& \leq  C \int_{\Real} H_\xi(s,\eta) d\eta\vert s-t\vert \\
& \quad + Ce^{C(s-t)} \int_\Real e^{-\vert y(t,\xi)-y(t,\eta)\vert} (y_\xi+H_\xi)(t,\eta)d\eta \vert s-t\vert \\
& \quad + e^{C(t-T_1)} \int_{\Real} e^{- \vert y(T_1, \xi)-y(T_1, \eta)\vert}(y_\xi+H_\xi)(s, \eta)\\
&\qquad  \qquad \quad \qquad \qquad \qquad\qquad   \times \mathbbm{1}_{\{\zeta \mid t<\tau(\zeta)\leq s\}}(\eta) d\eta\\
& \leq Ce^{C(T_2-T_1)}\vert s-t\vert \\
& \quad + e^{C(T_2-T_1)}\int_{\Real} e^{- \vert y(T_1, \xi)-y(T_1, \eta)\vert}(y_\xi+H_\xi)(T_1,\eta) \\
& \qquad \qquad \qquad \qquad \qquad \qquad \qquad \times \mathbbm{1}_{\{\zeta \mid t<\tau(\zeta)\leq s\}}(\eta) d\eta.
\end{align*}

Recalling \eqref{est:P1}, we end up with the following upper bound for $S_1$
\begin{align*}
S_1& = \sum_i \vert p(t_i, y(t_i, \xi_i))-p(t_{i-1}, y(t_{i-1}, \xi_i))\vert\\
& \leq \sum_i \vert P_1(t_i, \xi_i)-P_1(t_{i-1}, \xi_{i})\vert + \sum_{i} \vert P_2(t_i, \xi_i)-P_2(t_{i-1}, \xi_{i})\vert\\
& \leq C e^{ C(T_2-T_1)}\sum_i\vert t_i-t_{i-1}\vert \\
& \quad + e^{C(T_2-T_1)} \sum_i \int_{\Real} e^{-\vert y(T_1, \xi)-y(T_1, \eta)\vert} (y_\xi+H_\xi)(T_1, \eta) \mathbbm{1}_{\{\zeta \mid t_{i-1}<\tau(\zeta)\leq t_i\}}(\eta) d\eta\\
& \leq Ce^{ C(T_2-T_1)} \vert T_2-T_1\vert\\
& \quad + e^{ C(T_2-T_1)} \int_\Real e^{-\vert y(T_1, \xi)-y(T_1, \eta)\vert} (y_\xi+H_\xi) (T_1, \eta) \mathbbm{1}_{\{\zeta \mid T_1<\tau(\zeta)\leq T_2\}}(\eta) d\eta\\
& \leq Ce^{ C(T_2-T_1)} (1+\vert T_2-T_1\vert).
\end{align*}

{\it Estimate for $S_2$:}
Observe that 
\begin{align*}
\vert p(t, y(t,\xi))-p(t,y(s,\xi))\vert& \leq \norm{Q(t,\cdot)}_\infty \vert y(t,\xi)-y(s,\xi)\vert  \leq  C \vert t-s\vert,
\end{align*}
due to \eqref{rel:pPpxQ}, \eqref{Lag:C}, and \eqref{est:yts},
which implies 
\begin{equation*}
S_2= \sum_i \vert p(t_{i-1}, y(t_{i-1}, \xi_i))-p(t_{i-1}, y(t_i, \xi_i))\vert\leq C \sum_i\vert t_i-t_{i-1}\vert \leq C\vert T_2-T_1\vert.
\end{equation*}

{\it Estimate for $S_3$:} Recall that $\sigma(\cdot)$ is Lipschitz continuous with Lipschitz constant $\hat L$,  i.e.,  
\begin{equation*}
\vert \sigma(t)-\sigma(s)\vert \leq \hat L \vert t-s\vert \quad \text{ for all } s,t \in [T_1, T_2]. 
\end{equation*}
Thus following the same lines as for $S_2$ we have 
\begin{align*}
S_3= \sum_{i} \vert p(t_{i-1}, \sigma(t_i))-p(t_{i-1}, \sigma(t_{i-1}))\vert&\leq C \sum_i \vert \sigma(t_i)-\sigma(t_{i-1})\vert \leq C \hat L \vert T_2-T_1\vert.
\end{align*}

\section{Uniqueness of weak dissipative solutions $(u,\nu)$.}\label{sec:unique}

A closer look at \eqref{weak:u} and \eqref{weak:trans} suggests that the most natural approach for computing weak solutions $(u,\nu)$ is to apply the method of characteristics, which yields a unique solution whenever the differential equation for the characteristic is uniquely solvable. In our case, this equation reads
\begin{equation}\label{eq:char}
y_t(t,\xi)= u(t,y(t,\xi)),
\end{equation}
which might have more than one solution, since $u(t,x)$ is only H{\"o}lder continuous with exponent one-half. Nonetheless, we will prove in this section the following result.

\begin{theorem}\label{main}
For any initial data $(u_0, \nu_0)\in \D$ the Camassa--Holm equation has a unique global weak dissipative solution $(u,\nu)$ in the sense of Definition~\ref{def:dissol2}, which coincides with the dissipative solution constructed in \cite{GHR2}.
\end{theorem}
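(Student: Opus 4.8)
The strategy is to transform an arbitrary weak dissipative solution $(u,\nu)$ into Lagrangian coordinates by a change of variables dictated by $\nu$, to show that the resulting quadruple solves the semilinear system \eqref{sys:diffeq}--\eqref{sys:diffeqxi}, and then to invoke the unique solvability of that system in $\F$ together with the reconstruction map $M$. Since existence in the sense of Definition~\ref{def:dissol2} is already known, it suffices to prove that any such $(u,\nu)$ with $(u_0,\nu_0)=(u(0),\nu(0))\in\D$ coincides with the solution constructed in \cite{GHR2}.

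\emph{The change of variables.} Put $(y_0,U_0,V_0,H_0)=L((u_0,\nu_0))\in\F_0$, with $L$ as in Definition~\ref{def:EultoLag}; in particular $y_0$ is the generalized inverse of $x\mapsto x+\nu_0((-\infty,x))$ and $H_0=\id-y_0$. For $t>0$ let $y(t,\xi)$ be the characteristic through $y_0(\xi)$, set $U(t,\xi):=u(t,y(t,\xi))$ and $H(t,\xi):=\nu(t,(-\infty,y(t,\xi)))$, and let $P(t,\xi)=p(t,y(t,\xi))$, $Q(t,\xi)=p_x(t,y(t,\xi))$, $V(t,\xi)=F(t,y(t,\xi))=\int_{-\infty}^{y(t,\xi)}(u^2+u_x^2)(t,z)\,dz$ with $F$, $p$ as in \eqref{def:F}, \eqref{def:p}. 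The reason for working with the label $\xi$ (position plus accumulated $\nu$-mass) rather than with the velocity field alone is that, once $U$ and $H$ are tied to $\nu$ as above, the Cauchy--Schwarz inequality combined with $d\nu_{ac}(t)=(u^2+u_x^2)(t)\,dx$ yields $\abs{U(t,\xi_2)-U(t,\xi_1)}^2\le\big(H(t,\xi_2)-H(t,\xi_1)\big)\big(y(t,\xi_2)-y(t,\xi_1)\big)$, so that $U(t,\cdot)$ is Lipschitz in $\xi$, uniformly on compact time intervals; consequently \eqref{eq:char}, read in the variable $\xi$, is uniquely solvable, which pins down $y(t,\xi)$, the one-sided Lipschitz bound \eqref{oneside:Lip} and the transport equation \eqref{weak:trans} for $\nu$ being used, in the spirit of \cite{BJ,BCZ}, to match the two descriptions. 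This is also where the auxiliary measure $\nu$ is essential: unlike $(u^2+u_x^2)\,dx$ it is transported without loss, so the change of variables stays non-degenerate through wave breaking, at the price of introducing the equivalence classes of Definition~\ref{def:equivEuler}.

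\emph{Verification of the Lagrangian system.} One first checks that $(y,U,V,H)(t,\cdot)\in\F$ for every $t\ge0$: the membership and regularity conditions of Definition~\ref{def:Lagcoord} come from $(u(t),\nu(t))\in\D$, \eqref{oneside:Lip} and the Lipschitz estimates above; the algebraic identity $U^2y_\xi^2+U_\xi^2=y_\xi V_\xi$ follows from $d\nu_{ac}(t)=(u^2+u_x^2)(t)\,dx$ and $U_\xi=u_x(t,y)\,y_\xi$ on $\{y_\xi>0\}$; and $y_\xi(\xi)=0\Rightarrow V_\xi(\xi)=0$, more generally $V(t,\xi)=\int_{-\infty}^\xi\mathbbm{1}_{\{y_\xi\neq0\}}H_\xi\,d\eta$, follows from \eqref{oneside:Lip} together with the one-sided continuity \eqref{prop:lim:F} of $F$ — this is exactly where the fastest-dissipation property is encoded, namely that the absolutely continuous energy carried by a particle vanishes from the instant $\tau(\xi)$ at which $y_\xi(\cdot,\xi)$ first hits zero. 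A change of variables in the integrals \eqref{def:p}, \eqref{def:Pb}, \eqref{def:Qb}, using the directly verifiable identity $y(t)_{\#}(V_\xi(t)\,d\xi)=(u^2+u_x^2)(t)\,dx$, identifies $P$ and $Q$ with the Lagrangian operators \eqref{def:Pb}--\eqref{def:Qb}. It remains to establish the four equations of \eqref{sys:diffeq}: $y_t=U$ holds by the choice of characteristics; $H_t=U^3-2PU$ is obtained by evaluating the transport equation \eqref{weak:trans} on test functions concentrating on $(-\infty,y(t,\xi))$ and riding the characteristic, the source term producing exactly $(u^3-2pu)(t,y(t,\xi))$; the representation of $V$ has just been shown; and $U_t=-Q$ follows from the weak formulation \eqref{weak:u}, using that $t\mapsto u(t,\cdot)$ is Lipschitz into $L^2(\Real)$ (Definition~\ref{def:dissol2}\,(\ref{cond:dissol2:2})) and that $t\mapsto p_x(t,y(t,\xi))$ is of bounded variation along Lipschitz curves with the one-sided continuity \eqref{cond:limpx} (Definition~\ref{def:dissol2}\,(\ref{cond:dissol2:5})), so that $\tfrac{d}{dt}u(t,y(t,\xi))=-p_x(t,y(t,\xi))$ for a.e.\ $\xi$.

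\emph{Conclusion and main obstacle.} Thus $(y,U,V,H)$ solves \eqref{sys:diffeq}--\eqref{sys:diffeqxi} with initial datum $L((u_0,\nu_0))\in\F_0$; by the unique solvability of this system in $\F$ from \cite{GHR2}, it is precisely the Lagrangian solution underlying the dissipative solution constructed there, so $M((y,U,V,H)(t))$ is that solution. On the other hand $M((y,U,V,H)(t))=(u(t),\nu(t))$ directly from the construction, since $U(t,\cdot)=u(t,\cdot)\circ y(t,\cdot)$, the map $y(t,\cdot)$ is onto $\Real$, and $H(t,\xi)=\nu(t,(-\infty,y(t,\xi)))$ gives $\nu(t)=y(t)_{\#}(H_\xi(t)\,d\xi)$. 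Hence every weak dissipative solution coincides with the one of \cite{GHR2}, proving Theorem~\ref{main}. I expect the main difficulty to lie in making the change of variables of the second paragraph rigorous for the merely H\"older-continuous field $u$ — i.e.\ singling out a canonical characteristic family and showing $y_t=U$ — together with the verification of the $V$-equation, i.e.\ that a particle stops carrying absolutely continuous energy exactly at its wave-breaking time; this is precisely where the one-sided Lipschitz bound \eqref{oneside:Lip} and the one-sided limit properties \eqref{prop:lim:F}, \eqref{cond:limpx}, rather than the plain Eulerian formulation, do the work.
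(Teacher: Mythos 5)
Your overall architecture is the same as the paper's: pass to Lagrangian coordinates via a change of variables dictated by $\nu$, show the resulting quadruple solves \eqref{sys:diffeq}--\eqref{sys:diffeqxi}, and invoke the unique solvability of that system from \cite{GHR2}. However, the step you defer as ``the main difficulty'' --- singling out a canonical characteristic family and proving $y_t=U$ --- is not a technical afterthought; it is the core of the proof, and the mechanism you sketch for it is circular. You argue that $U(t,\cdot)$ is Lipschitz in $\xi$ (via Cauchy--Schwarz and $d\nu_{ac}=(u^2+u_x^2)dx$), ``so that \eqref{eq:char}, read in the variable $\xi$, is uniquely solvable, which pins down $y(t,\xi)$.'' But $U(t,\xi)=u(t,y(t,\xi))$ and $H(t,\xi)$ are themselves defined through the unknown $y(t,\xi)$, so the Lipschitz bound in $\xi$ presupposes exactly the identification of the time-$t$ label with the time-$0$ label that you are trying to construct. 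The paper's resolution is different: it never solves the characteristic ODE directly. It first extracts from the weak transport equation \eqref{weak:trans}, by a careful choice of test functions transported by a H\"older majorant of $u$ (Lemma~\ref{lem:fineG}), the two-sided estimate \eqref{nest:Lip} showing that $\zeta\mapsto\zeta+I(s,t,\tilde y(t,\zeta))+\tilde U(t,\zeta)(s-t)$ conjugates $\tilde y(t,\cdot)$ to $\tilde y(s,\cdot)$ up to $O(|s-t|^{3/2})$; it then solves the ODE \eqref{Car:ODE} for the \emph{label} $k(t,\xi)$, whose vector field $(\tilde U^3-2\tilde P\tilde U+\tilde U)(t,\cdot)$ genuinely is Lipschitz in $\zeta$, and only afterwards deduces the integral equation \eqref{inteq:y} for $y(t,\xi)=\tilde y(t,k(t,\xi))$ by summing the $O(\Delta t^{3/2})$ errors over a fine partition. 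Without Lemma~\ref{lem:fineG} (or an equivalent a priori estimate derived from \eqref{weak:trans}) there is no non-circular route to the characteristic equation, and this lemma is entirely absent from your proposal.

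Two further points. First, your definition $H(t,\xi)=\nu(t,(-\infty,y(t,\xi)))$ is constant on the $\xi$-interval corresponding to an atom of $\nu(t)$, whereas the correct object $\tilde H(t,k(t,\xi))=k(t,\xi)-y(t,\xi)$ interpolates between $G(t,\tilde y)$ and $G(t,\tilde y+)$ and increases linearly across such an interval; your version breaks the identity $y+H=k$ on which the derivation of $H_t=U^3-2PU$ rests. Second, the claim that the $V$-equation ``follows from \eqref{oneside:Lip} together with \eqref{prop:lim:F}'' understates a substantial portion of the paper's argument: one must prove that $y_\xi(t,\xi)=0$ implies $y_\xi(s,\xi)=0$ for all $s\ge t$, which in the paper requires the differentiated integral equations for $y_\xi$, $U_\xi$, $H_\xi$, the lower bound \eqref{cont:cond2} on $U_\xi$ over $\Omega(t)$, the dichotomy \eqref{cont:cond3} coming from \eqref{rel:yxi} and the one-sided Lipschitz bound, and a smallness condition on $T$ that forces the final iteration over overlapping time intervals. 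None of this is dispensable, since it is precisely the statement that the solution dissipates at the maximal rate rather than at some intermediate $\alpha$-dissipative rate.
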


Given a weak, dissipative solution $(u,\nu)$, let
\begin{equation}\label{def:tiy}
\tilde y(t,\zeta)=\sup\{x\mid x+G(t,x)<\zeta\},
\end{equation}
where $G(t,x)=\nu(t,(-\infty,x))$. By construction, $\tilde y(t,\cdot):\mathbb{R}\to \mathbb{R}$ is non-decreasing and Lipschitz continuous with Lipschitz constant at most one, cf. the proof of \cite[Theorem 3.8]{HR2}. Furthermore, introduce
\begin{equation}\label{def:tiH}
\tilde H(t,\zeta)=\zeta-\tilde y(t,\zeta).
\end{equation}
Since $\tilde y(t,\cdot)$ is non-decreasing and Lipschitz continuous with Lipschitz constant at most one, it follows that also $\tilde H(t,\cdot):\mathbb{R}\to \mathbb{R}$ is non-decreasing and Lipschitz continuous with Lipschitz constant at most one. For later use, we define
\begin{align}\label{def:tiU}
\tilde U(t,\zeta)&=u(t,\tilde y(t,\zeta)),\\ 
\tilde V(t,\zeta)&= F(t, \tilde y(t,\zeta)), \\ \label{def:tiP}
\tilde P(t,\zeta)&= p(t, \tilde y(t,\zeta)),\\ \label{def:tiQ}
\tilde Q(t,\zeta)&=p_x(t, \tilde y(t,\zeta)).
\end{align}

\vspace{0.1cm}
{\it Notation:} Let $T>0$ be a number to be specified at the end of this section. Furthermore, we will for the remainder of this section denote by $C$ positive, real constants, which are only dependent on $T$ and $H_\infty(0)$ and which might change from line to line.

\subsection{The integral equation for the characteristics.}

The starting point for deriving \eqref{eq:char} is the study of the inhomogeneous transport equation satisfied by $\nu$, i.e., 
\begin{equation*}
\nu_t+(u\nu)_x= (u^3-2pu)_x.
\end{equation*}
Clever choices of the test function $\phi$ in \eqref{weak:trans}, the weak formulation of the above transport equation, make it possible to prove the following result.

\begin{lemma}\label{lem:fineG}
Let 
\begin{equation}\label{def:G}
G(t,x)=\nu(t,(-\infty,x)),
\end{equation}
then for any $s,t\in [0,T]$ with $t\leq s$
\begin{align*}
G(s, x+  u(t,x)(s-t)&-M(s-t)^{3/2})-N(s-t)^{3/2}\\
& \leq G(t,x)+ \int_t^s (u^3-2pu)(l,  x+ u(t,x)(l-t))dl\\
& \leq G(s, x+u(t,x)(s-t)+M(s-t)^{3/2})+ N(s-t)^{3/2},
\end{align*}
where $M$ and $N$ denote positive constants which are independent of $s$ and $t$.
\end{lemma}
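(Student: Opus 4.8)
Lemma~\ref{lem:fineG} is essentially a quantitative version of the fact that $G(t,\cdot)$ is the distribution function of $\nu(t)$ and that $\{\nu(t)\}$ is a weak solution of the transport equation, expressed through the test‑function identity \eqref{weak:trans}. The plan is to insert into \eqref{weak:trans}, with $T_1=t$ and $T_2=s$, a test function that approximates the indicator of the half-plane lying to the left of a suitably chosen curve $\Gamma$ emanating from $(t,x)$, and to pick $\Gamma$ so that it strictly outruns (for the upper bound) or lags behind (for the lower bound) the flow; then mass can cross $\Gamma$ only in one direction, which gives the transport term a sign. Because $u(t,\cdot)$ is merely H{\"o}lder-$\tfrac12$, one cannot follow exact characteristics, so $\Gamma$ will be an explicit "barrier'' built around the frozen-velocity line $\gamma(l)=x+u(t,x)(l-t)$.

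For the upper bound I would fix constants $K,K'$ (to be chosen) and put
$$\Gamma_\delta(l)=x+u(t,x)(l-t)+\tfrac{2K}{3}(l-t)^{3/2}+K'\delta^{1/2}(l-t),\qquad l\in[t,s],$$
so that $\Gamma_\delta(t)=x$ and $\Gamma_\delta'(l)=u(t,x)+K(l-t)^{1/2}+K'\delta^{1/2}$. Since $\|u(l,\cdot)\|_\infty$ is bounded by a constant depending only on $\nu(0,\Real)$ (cf.\ \eqref{est:uinf}), one has $|\Gamma_\delta(l)-x|\le C(l-t)$ for $\delta$ and $l-t$ bounded, and then the joint one-half H{\"o}lder continuity from Definition~\ref{def:dissol2}~\eqref{cond:dissol2:2} yields, for every $z$ with $|z-\Gamma_\delta(l)|\le\delta$,
$$u(l,z)\le u(t,x)+C(l-t)^{1/2}+C\delta^{1/2}.$$
Choosing $K,K'\ge C$ therefore forces $\Gamma_\delta'(l)-u(l,z)\ge0$ on the whole strip $\{z:\Gamma_\delta(l)-\delta\le z\le\Gamma_\delta(l)\}$. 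Now take $\phi(l,z)=\Psi_\delta(\Gamma_\delta(l)-z)\,\eta_R(z)$, where $\Psi_\delta$ is smooth, non-decreasing, equal to $0$ on $(-\infty,0]$ and to $1$ on $[\delta,\infty)$, and $\eta_R\in C_c^\infty$ equals $1$ on $[-R,R]$ with $|\eta_R'|\le C/R$; the cut-off $\eta_R$ is only needed because $G$ is a half-line indicator, and since $\nu$ is finite and $u\in L^2\cap L^\infty$ with $p$ bounded, the error terms carrying $\eta_R'$ vanish as $R\to\infty$. (This $\phi$ is $C^1_c$ rather than $C^\infty_c$ because of the $(l-t)^{3/2}$ term; \eqref{weak:trans} extends to Lipschitz test functions by a routine mollification argument.) Since $\phi_z\le0$ and $\phi_l+u\phi_z=\Psi_\delta'(\Gamma_\delta(l)-z)(\Gamma_\delta'(l)-u(l,z))\eta_R\ge0$ on its support (up to the vanishing $\eta_R'$-piece), \eqref{weak:trans} rearranges to $\int\phi(t,\cdot)\,d\nu(t)\le\int\phi(s,\cdot)\,d\nu(s)+\int_t^s\!\!\int(u^3-2pu)\phi_z\,dz\,dl+o(1)$ as $R\to\infty$. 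Letting $R\to\infty$ and then $\delta\to0$ (dominated convergence for the finite measures $\nu(t),\nu(s)$, and $\Psi_\delta'(\Gamma_\delta(l)-\cdot)\,dz\rightharpoonup\delta_{\Gamma_0(l)}$ with $\Gamma_0(l)=x+u(t,x)(l-t)+\tfrac{2K}{3}(l-t)^{3/2}$) gives $G(t,x)\le G(s,\Gamma_0(s)^+)-\int_t^s(u^3-2pu)(l,\Gamma_0(l))\,dl$. Finally I would swap $\Gamma_0$ for the frozen line in the source integral: $z\mapsto u^3-2pu$ is H{\"o}lder-$\tfrac12$ in $z$ with constant controlled by $\nu(0,\Real)$ (using $u\in H^1$ and \eqref{est:pxpinf}), and $|\Gamma_0(l)-x-u(t,x)(l-t)|=\tfrac{2K}{3}(l-t)^{3/2}$, so the swap costs at most $C(s-t)^{7/4}\le CT^{1/4}(s-t)^{3/2}$; and since $\Gamma_0(s)=x+u(t,x)(s-t)+\tfrac{2K}{3}(s-t)^{3/2}$, monotonicity of $G(s,\cdot)$ gives $G(s,\Gamma_0(s)^+)\le G(s,x+u(t,x)(s-t)+M(s-t)^{3/2})$ for any $M>\tfrac{2K}{3}$. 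This is the claimed inequality with, say, $N=CT^{1/4}$ and $M=\tfrac{2K}{3}+1$.

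The lower bound is entirely symmetric: one uses the barrier $\Gamma_\delta^-(l)=x+u(t,x)(l-t)-\tfrac{2K}{3}(l-t)^{3/2}-K'\delta^{1/2}(l-t)$, which now lags behind the flow, so the same H{\"o}lder estimate gives $\Gamma_\delta^-{}'(l)-u(l,z)\le0$ on the relevant strip, hence $\phi_l+u\phi_z\le0$, the inequality in \eqref{weak:trans} flips, and after the same limits and comparison one arrives at $G(t,x)\ge G(s,x+u(t,x)(s-t)-M(s-t)^{3/2})-N(s-t)^{3/2}$; the case $s=t$ is trivial.

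The main obstacle is precisely the feature the barrier is designed to handle: since the velocity is only H{\"o}lder-$\tfrac12$, the frozen line need not lie on a definite side of the true flow, and the delicate point is to arrange that $\Gamma_\delta'(l)-u(l,z)$ keeps a fixed sign not just along $\Gamma_\delta$ but on the full $\delta$-wide strip where $\phi_z\neq0$, and uniformly as $\delta\to0$ — this is what dictates both the $(l-t)^{1/2}$-drift (hence the exponent $3/2$) and the $\delta^{1/2}$ correction term. A lesser technical point is that $\nu(s)$ may have atoms, so $G(s,\cdot)$ is only one-sidedly continuous; this is harmless here exactly because the statement already builds in the $M(s-t)^{3/2}$ slack on the spatial argument.
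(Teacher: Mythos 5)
Your proof is correct, and it rests on the same core mechanism as the paper's: test the weak transport identity \eqref{weak:trans} against a monotone (smoothed step) function whose level set moves strictly faster, respectively slower, than $u$ by a margin dictated by the H\"older-$\tfrac12$ modulus, so that the term $\int\!\!\int(\phi_t+u\phi_x)\,d\nu\,dl$ acquires a sign; the $(s-t)^{3/2}$ slack then comes from integrating the $(l-t)^{1/2}$ drift, and the final swap to the frozen line in the source integral costs $O((s-t)^{7/4})$ in both arguments. The implementations differ in how the test function is built. The paper fixes the H\"older majorant $g(l,z)=u(t,x)+\bar D\bigl((l-t)+|z-x|\bigr)^{1/2}$ of $u$ on all of $[t,s]\times\Real$, solves $\phi_l+g\phi_z=0$ exactly by transporting a monotone profile $\psi$ along the flow $\xi(l,z)$ of $g$, and gets the sign from $(u-g)\phi_z\le 0$ globally; it then handles the non-compact support of $\psi$ via a tightness estimate \eqref{claim:step1} and a correction $\tilde\phi$. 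You instead use a single explicit barrier $\Gamma_\delta$ and a smoothed step $\Psi_\delta(\Gamma_\delta(l)-z)\eta_R(z)$, verifying the sign of $\Gamma_\delta'-u$ only on the $\delta$-strip carrying the gradient (which is exactly what forces your $K'\delta^{1/2}(l-t)$ correction, absent from the paper since its comparison is global), and your $\eta_R$-cutoff disposes of the non-compactness more cheaply, with errors that are manifestly $O(1/R)$ by finiteness of $\nu$ and $u\in L^2\cap L^\infty$. Your version is arguably more elementary in that it avoids solving an auxiliary ODE/transport problem and the attendant uniqueness discussion for $\xi_l=g(l,\xi)$; the paper's version avoids the extra $\delta$-layer of limits. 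Two minor points: your assertion $\phi_z\le 0$ fails on the part of the cutoff where $\eta_R'>0$, but you never actually use it, so this is harmless; and the extension of \eqref{weak:trans} to $C^1_c$ test functions, which you invoke as routine, is genuinely needed (your $\phi$ is only $C^1$ in $l$ because of the $(l-t)^{3/2}$ term) but is the same extension the paper also performs.
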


\begin{proof}
We will only prove that there exist positive constants $M$ and $N$ such that 
\begin{align}\label{claim:G2}
G(0, x)\leq G(t, x+ u(0, x)t+ Mt^{3/2})- \int_0^t (u^3-2pu)(l, x+ u(0, x)l) dl+ Nt^{3/2},
\end{align}
since the other inequalities follow using the same argument with slight modifications.

{\it Step 1: Enlarge the set of admissible test functions $\phi$ in \eqref{weak:trans}.}
Note that $\nu(t,\mathbb{R})=D$ for all $t\in \Real$, since $u(t,\cdot)$ and $p(t,\cdot)$ belong to $H^1(\Real)$. In fact, as we will see next, given $\varepsilon>0$, there exists $K>0$ such that 
\begin{equation}\label{claim:step1}
\nu(t,(-K,K))\geq D-\varepsilon \quad \text{ for all }t\in [0,T].
\end{equation}

Let $\delta >0$ be a small number specified later, then there exists $x_\delta>0$, such that 
\begin{equation}\label{decay:u}
\vert u(t,x)\vert <\delta \quad \text{ for all } (t,x)\in [0,T]\times ((-\infty, -x_\delta) \cup (x_\delta, \infty)),
\end{equation}
since $u$ is H{\"o}lder continuous and satisfies $u(t,\cdot)\in H^1(\mathbb{R})$ for all $t\in [0,T]$. Pick $\psi\in C^\infty(\mathbb{R})$ such that 
\begin{equation*}
\psi(-x)=\psi(x), \quad \psi\mid_{[0,x_\delta]}=1, \quad \psi\mid_{[x_\delta+1, \infty)}=0, \quad\text{and}\quad \psi'\mid_{[0,\infty)}\leq 0.
\end{equation*}
Then 
\begin{equation*}
G(t, x_{\delta})-G(t, -x_{\delta}+)\leq \int_{\mathbb{R}} \psi(x)d\nu(t)\leq G(t, x_{\delta}+1)-G(t,- x_{\delta}-1+),
\end{equation*}
which, combined with \eqref{weak:trans}, yields
\begin{align*}
G(t, x_{\delta}+1)-G(t, -x_{\delta}-1+)& \geq \int_{\mathbb{R}}\psi(x)d\nu(t)\\
& =\int_{\mathbb{R}} \psi(x) d\nu(0)+ \int_0^t \int_{\mathbb{R}} u(l,x) \psi'(x) d\nu(l) dl\\
&  \quad -\int_0^t \int_{\mathbb{R}} (u^3-2pu)(l,x)\psi'(x) dx dl\\
& \geq G(0, x_{\delta})- G(0, -x_{\delta} +)\\ 
& \quad + \int_0^t \int_{\mathbb{R}} u(l,x) \psi'(x) d\nu(l) dl\\
&  \quad -\int_0^t \int_{\mathbb{R}} (u^3-2pu)(l,x)\psi'(x) dx dl.
\end{align*}
Since $\supp(\psi')\subset (-1-x_{\delta}, -x_{\delta})\cup (x_{\delta}, x_{\delta}+1)$, \eqref{decay:u}, \eqref{est:uinf}, and \eqref{est:pxpinf} imply
\begin{equation*}
\left\vert \int_0^t \int_{\mathbb{R}} u(l,x) \psi'(x) d\nu(l) dl\right\vert\leq \delta TD \|\psi'\|_\infty
\end{equation*}
and 
\begin{equation*}
\left\vert \int_0^t \int_{\mathbb{R}} (u^3-2pu)(l,x)\psi'(x) dx dl\right\vert \leq \delta T(\delta^2+D)\|\psi'\|_\infty.
\end{equation*}
Thus, choosing first $\delta>0$ such that $\delta T(\delta^2+2D)\|\psi'\|_\infty\leq \frac12 \varepsilon$ and thereafter $K>x_{\delta}+1>0$ such that 
\begin{equation*}
\nu(0,(-(K-1),K-1))=G(0, K-1)-G(0,-(K-1)+)\geq D-\frac12 \varepsilon
\end{equation*}
yields \eqref{claim:step1}.

Let $\psi\in C^\infty(\Real)$ be a monotone increasing function such that $\psi'\in C_c^\infty(\Real)$ and $\psi(x)\to 0$ as $x\to \infty$. Furthermore, denote by $\phi(t,x)\in C^1([0,T]\times \Real)$ the unique solution to $\phi_t+g\phi_x=0$ with initial data $\phi(0,x)=\psi(x)$ for some given continuous function $g(t,x)$ to be specified later. Then, the function $\hat \phi=\phi-\tilde \phi$ belongs to $C^1([0,T]\times \Real)$ and $\hat\phi(t,\cdot)$, $\hat\phi_x(t,\cdot)\in C_0(\Real)$ for all $t\geq 0$, if $\tilde \phi\in C^\infty([0,T]\times\Real)$ such that 
\begin{equation*}
\tilde \phi_x\geq 0
\quad \text{ and } \quad  \tilde \phi(t,x)=\begin{cases} \psi(-\infty) &\quad \text{ for } x\leq -1-K,\\ 0  & \quad \text{ for } -K\leq x,
\end{cases}
\end{equation*}
Moreover it can be shown that \eqref{weak:trans} also holds for $\hat\phi$, since $C_c^\infty(\Real)$ is a dense subset of $C_0(\Real)$, and hence 
\begin{align*}
\int_\Real \phi(t,x) d\nu(t)&= \int_\Real \tilde\phi(t,x)d\nu(t)+ \int_\Real \hat \phi(t,x) d\nu(t)\\
& = \int_\Real \tilde \phi(t,x) d\nu(t)+ \int_\Real \hat \phi(0,x)d\nu(0)\\
& \quad + \int_0^t \int_\Real (\hat \phi_t+ u\hat\phi_x)(l,x) d\nu(l) dl - \int_0^t\int_\Real (u^3-2pu)\hat \phi_x(l,x) dxdl\\
& = \int_\Real \tilde \phi(t,x)d\nu(t)-\int_\Real \tilde \phi(0,x) d\nu(0)\\
& \quad - \int_0^t \int_\Real (\tilde \phi_t+u\tilde \phi_x)(l,x) d\nu(l)dl + \int_0^t \int_\Real(u^3-2pu)\tilde \phi_x(l,x) dx dl\\
& \quad + \int_\Real \phi(0,x) d\nu(0)\\
& \quad + \int_0^t \int_\Real (\phi_t+u\phi_x)(l,x) d\nu(l) dl - \int_0^t \int_\Real (u^3-2pu)\phi_x(l,x) dx dl.
\end{align*}
By \eqref{claim:step1} and \eqref{decay:u}, the terms depending on $\tilde \phi(t,x)$ can be made arbitrarily small by increasing $K$, so that 
\begin{align}\nonumber
\int_\Real \phi(t,x) d\nu(t)& = \int_\Real \phi(0,x) d\nu(0)\\ \label{ext:weaktransp}
& + \int_0^t \int_\Real (u-g)\phi_x(l,x) d\nu(l) dl- \int_0^t \int_\Real (u^3-2pu)\phi_x(l,x) dxdl,
\end{align}
where we also used $\phi_t+g\phi_x=0$.

{\it Step 2: Proof of \eqref{claim:G2}}
Let $\bar x\in \Real$.
Since $u$ is H{\"o}lder continuous with exponent one-half, there exists a constant $\bar D$ such that 
\begin{equation}\label{u:Hol}
\vert u(t,x)-u(s,y)\vert \leq \bar D(\vert t-s\vert+ \vert x-y\vert )^{1/2} \quad \text{ for all } (t,x), (s,y)\in [0,T]\times \Real,
\end{equation}
and, in particular 
\begin{equation*}
u(t,x)\leq u(0,\bar x)+ \bar D(t+\vert x-\bar x\vert )^{1/2} \quad \text{ for all } (t,x) \in [0,T]\times \Real.
\end{equation*}

Choose $g(t,x)$ to be the H{\"o}lder continuous function
\begin{equation*}
g(t,x)= u(0,\bar x)+ \bar D(t+\vert x-\bar x\vert )^{1/2}.
\end{equation*}
Then $g\geq u$ on all of $[0,T]\times \Real$ and $g(0,\bar x)=u(0,\bar x)$. Moreover, let $\psi\in C^\infty(\Real)$ be a monotone increasing function such that $\psi'\in C^\infty_c(\Real)$ and $\psi(x)\to 0$ as $x\to \infty$. Then, it has been shown in the proof of \cite[Lemma 3.3]{GH}, that $\phi(t,x)$, the unique solution to $\phi_t+g\phi_x=0$ with initial data $\phi(0,x)=\psi(x)$, is given implicitly through
\begin{equation*}
\phi(t, \xi(t,z))=\phi(0,\xi(0,z))=\psi(z),
\end{equation*}
where $\xi(t,z)$ is the unique solution to 
\begin{equation*}
\xi_t= g(t,\xi) , \quad \xi(0,z)=z.
\end{equation*}
In addition, the function $\xi(t,\cdot)$ is strictly increasing, continuous, and satisfies for $t\geq 0$ and $z=\bar x$
\begin{equation}\label{est:xibx}
\bar x+ u(0, \bar x)t\leq \xi(t,\bar x)\leq  \bar x+ u(0,\bar x)t+ Mt^{3/2}
\end{equation}
for some positive constant $M$, which does not depend on the particular choice of $\bar x$. Furthermore, 
\begin{align*}
\int_{\Real} \phi(t,x) d\nu(t)& = -\int_\Real \phi_x(t,x) G(t,x) dx\\
& = -\int_\Real \phi_x(t, \xi(t,z)) G(t, \xi(t,z)) \xi_z(t,z)dz\\
& = - \int_\Real \frac{d}{dz} (\phi(t,\xi(t,z))) G(t, \xi(t,z)) dz\\
&=- \int_\Real \frac{d}{dz} (\phi(0, \xi(0,z))) G(t,\xi(t,z)) dz\\
& =- \int_\Real \psi'(z) G(t, \xi(t,z)) dz,
\end{align*}
and 
\begin{align*}
\int_0^t \int_\Real (u^3-2pu)\phi_x(l,x) dxdl=\int_0^t \int_\Real (u^3-2pu)(l, \xi(l, z))\psi'(z) dz.
\end{align*}
In addition, 
$\phi_x(t,x)\geq 0$, which, together with $g \geq u$ on all of $[0,T] \times \Real$, implies 
\begin{equation*}
\int_0^t \int_\Real (u-g)\phi_x(l,x)d\nu(l)dl\leq 0.
\end{equation*}
Thus \eqref{ext:weaktransp} turns into 
 \begin{align*}
 \int_\Real (G(0,z)-G(t, \xi(t,z)))\psi'(z)dz \leq -\int_0^t \int_\Real (u^3-2pu)(l, \xi(l,z))\psi'(z) dzdl.
 \end{align*}
 Since $\psi'$ can be any positive function in $C_c^\infty(\Real)$, we have 
 \begin{equation*}
 G(0,z\pm)\leq G(t, \xi(t,z)\pm)- \int_0^t (u^3-2pu)(l, \xi(l,z)) dl,
 \end{equation*}
 where the integral on the right hand side is well-defined, since $u$ is H{\"o}lder continuous and $p$ is a function of bounded variation along the Lipschitz continuous path $t\mapsto (t, \xi(t,z))$ in the sense of Definition~\ref{def:dissol2} \eqref{cond:dissol2:5}.
Choosing $z=\bar x$ and recalling \eqref{est:xibx}, we obtain
\begin{equation*}
G(0, \bar x)\leq G(t, \bar x+ u(0, \bar x)t+ Mt^{3/2})- \int_0^t (u^3-2pu)(l, \xi(l, \bar x))dl.
\end{equation*}
Using \eqref{est:uinf}, \eqref{est:pxpinf}, \eqref{u:Hol}, \eqref{est:xibx}, and that $p(t,\cdot)$ is Lipschitz continuous once more, it follows that there exists a constant $N>0$, independent of $\bar x$ such that 
\begin{equation*}
-\int_0^t(u^3-2pu)(l, \xi(l, \bar x))dl\leq- \int_0^t (u^3-2pu)(l, \bar x+ u(0, \bar x)l) dl + Nt^{3/2},
\end{equation*}
and hence
\begin{align*}
G(0, \bar x)\leq G(t, \bar x+ u(0, \bar x)t+ Mt^{3/2})- \int_0^t (u^3-2pu)(l, \bar x+ u(0, \bar x)l) dl+ Nt^{3/2}.
\end{align*}
\end{proof}

Given $T>0$, we are now ready to show that Lemma~\ref{lem:fineG} implies that $\tilde y(t,\zeta)$ is Lipschitz continuous on $[0,T]\times\mathbb{R}$ and hence differentiable a.e. on $[0,T]\times \mathbb{R}$.

Let 
\begin{equation}\label{def:I}
I(s,t,x)= \int_t^s (u^3-2pu)(l, x+u(t,x)(l-t))dl,
\end{equation}
which satisfies, cf. \eqref{est:uinf} and \eqref{est:pxpinf}, 
\begin{equation}\label{est:I}
\vert I(s,t,x)\vert \leq C\vert s-t\vert,
\end{equation}
for some $C>0$. 
Furthermore, for any $t\geq 0$, introduce the strictly increasing function $L(t,\cdot):\Real\to \Real$ given by 
\begin{equation}\label{def:L}
L(t,x)=x+G(t,x),
\end{equation}
which satisfies, according to Lemma~\ref{lem:fineG}
\begin{align}\nonumber
L(s, x+u(t,x)(s-t)&-M(s-t)^{3/2}) -N(s-t)^{3/2}\\ \nonumber
&\leq L(t,x)+I(s,t,x) +u(t,x)(s-t)\\ \label{LL:est}
& \leq L(s, x+u(t,x)(s-t)+M(s-t)^{3/2}) + N(s-t)^{3/2}.
\end{align}
In addition, by \eqref{def:tiy},
\begin{equation}\label{prop:L}
L(t,\tilde y(t,\zeta))\leq \zeta \leq L(t, \tilde y(t,\zeta)+), \quad \text{ for all } (t, \zeta)\in \Real^+\times\Real,
\end{equation}
so that choosing $x=\tilde y(t,\zeta)$ in the first inequality in \eqref{LL:est} and using \eqref{def:tiU} yields 
\begin{align*}
L(s, \tilde y(t,\zeta)& +\tilde U(t,\zeta)(s-t)-M(s-t)^{3/2})\\
& \leq \zeta +I(s,t, \tilde y(t,\zeta))+\tilde U(t, \zeta)(s-t)+N(s-t)^{3/2}\\
& \leq L(s, \tilde y(s, \zeta +I(s,t,\tilde y(t,\zeta)) +\tilde U(t,\zeta)(s-t)+N(s-t)^{3/2})+), 
\end{align*}
which implies
\begin{align*}
\tilde y(t,\zeta)& +\tilde U(t,\zeta)(s-t)-M(s-t)^{3/2}\\
& \leq \tilde y(s,\zeta+I(s,t,\tilde y(t,\zeta))+\tilde U(t,\zeta)(s-t)+N(s-t)^{3/2})\\
& \leq \tilde y(s,\zeta+I(s,t,\tilde y(t,\zeta)) +\tilde U(t,\zeta)(s-t))+ N (s-t)^{3/2},
\end{align*}
since $L(t,\cdot):\Real\to \Real$ is strictly increasing and $\tilde y(t,\cdot)$ is Lipschitz continuous with Lipschitz constant at most one.

Following the same lines, but using this time the second inequality in \eqref{LL:est}, yields
\begin{align*}
\tilde y(s,\zeta +I(s,t, \tilde y(t,\zeta))+&\tilde U(t,\zeta)(s-t))- N (s-t)^{3/2}\\
& \qquad \leq \tilde y(t,\zeta) +\tilde U(t,\zeta)(s-t)+M(s-t)^{3/2}.
\end{align*}
Thus we have shown that there exists a constant $C>0$ such that 
\begin{equation}\label{nest:Lip}
\vert \tilde y(t,\zeta)+\tilde U(t,\zeta)(s-t)-\tilde y\big(s,\zeta +I(s,t, \tilde y(t,\zeta))+ \tilde U(t,\zeta)(s-t)\big)\vert \leq C\vert s-t\vert ^{3/2}.
\end{equation}
An immediate consequence of this estimate is that $\tilde y(t,\zeta)$ is Lipschitz continuous on $[0,T]\times \Real$ and hence differentiable a.e. on $[0,T]\times \Real$. Indeed, using \eqref{est:I} and \eqref{nest:Lip} together with $\tilde y(t,\cdot)$ being Lipschitz continuous with Lipschitz constant at most one and $u$ satisfying \eqref{est:uinf} and \eqref{def:tiU}, we have 
that there exists another constant $C>0$, such that 
\begin{align*}
\vert \tilde y(t,\zeta)-\tilde y(s, \eta)\vert & \leq \vert \tilde y(t, \zeta)+\tilde U(t,\zeta)(s-t) -\tilde y\big(s,\zeta +I(s,t, \tilde y(t,\zeta))+\tilde U(t,\zeta)(s-t)\big)\vert \\
& \quad + \vert \tilde y\big(s,\zeta +I(s,t, \tilde y(t,\zeta))+\tilde U(t,\zeta)(s-t)\big)-\tilde y(s, \eta)\vert \\
& \quad + \vert \tilde U(t,\zeta)(s-t)\vert\\
& \leq C(\vert s-t\vert +\vert \zeta-\eta\vert).
\end{align*}

Instead of identifying a differential equation for $\tilde y(t,\zeta)$, the remainder of this section will be devoted to identifying an integral equation of a relabeled version of $\tilde y(t,\zeta)$. To be more precise, 
we show that there exists a mapping $k(t,\xi)$ such that 
\begin{equation}\label{def:yUH}
(y,U, V,H)(t,\xi)= (\tilde y, \tilde U, \tilde V,\tilde H)(t, k(t,\xi))
\end{equation}
satisfies
\begin{equation*}
y(s,\xi)=y(t, \xi)+ \int_t^s U(l, \xi) dl.
\end{equation*}

To identify a suitable function $k(t, \xi)$, we have a closer look at \eqref{nest:Lip} and in particular, at the expression 
\begin{equation}\label{closer:look}
\zeta +I(s,t, \tilde y(t,\zeta))+\tilde U(t,\zeta)(s-t),
\end{equation}
which is an approximation of the solution to the ordinary differential equation
\begin{equation*}
f_l(l,\xi)=(\tilde U^3-2\tilde P\tilde U+ \tilde U)(l, f(l,\xi)), \quad l\geq t
\end{equation*}
with $f(t,\xi)=\xi$. Motivated by this observation, we choose $k(t,\xi)$ to be, if it exists, the solution to 
\begin{equation}\label{Car:ODE}
k_t(t,\xi)=(\tilde U^3-2\tilde P\tilde U+ \tilde U)(t, k(t,\xi)) \quad \text{ with }\quad k(0,\xi)=\xi.
\end{equation}

\begin{lemma}\label{lem:k}
The ordinary differential equation \eqref{Car:ODE}  has a unique solution $k(t,\xi)$ for $t\in [0,T]$. Moreover, the function $k(t,\xi)$ is Lipschitz continuous on $[0,T]\times \Real$. In addition, for any $t\in [0,T]$, 
the function $k(t, \cdot):\Real\to \Real$ is strictly increasing and there exists  a constant $C>0$ such that for any $\xi_1<\xi_2$
\begin{equation}\label{Gron:k}
e^{-Ct} ( \xi_2-\xi_1) \leq k(t,\xi_2)-k(t,\xi_1)\leq e^{Ct} (\xi_2-\xi_1).
\end{equation}
\end{lemma}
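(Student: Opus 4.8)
The statement of Lemma~\ref{lem:k} asks for existence, uniqueness, Lipschitz continuity, and two-sided exponential control of the increments of the solution $k(t,\xi)$ to the ODE \eqref{Car:ODE}. The natural strategy is a Picard-type argument built on the fact that the right-hand side $b(t,z):=(\tilde U^3-2\tilde P\tilde U+\tilde U)(t,z)$ is, although not Lipschitz in $z$ globally, \emph{one-sided Lipschitz} in $z$ uniformly in $t$; this is exactly the ingredient that forces backward (here: forward) uniqueness even though $\tilde U(t,\cdot)=u(t,\tilde y(t,\cdot))$ is only H\"older. I would proceed in four steps.

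\emph{Step 1: boundedness and one-sided Lipschitz bound for $b$.} From \eqref{est:uinf}, \eqref{est:pxpinf} and \eqref{def:tiU}, \eqref{def:tiP} we get $\|\tilde U(t,\cdot)\|_\infty,\|\tilde P(t,\cdot)\|_\infty\le C$, so $|b(t,z)|\le C$ for all $(t,z)\in[0,T]\times\Real$; this already gives that any solution satisfies $|k(t,\xi)-\xi|\le Ct$ and, once we have a solution, its Lipschitz continuity in $t$. For the $z$-direction, the key point is that $z\mapsto b(t,z)$ is bounded above in its derivative in the distributional sense: writing $\partial_z b=(3\tilde U^2-2\tilde P+1)\partial_z\tilde U-2\tilde U(\partial_z\tilde P)$ informally, and using Definition~\ref{def:dissol2}\eqref{cond:dissol2:22} (the one-sided Lipschitz bound $u_x\le D$) together with $\tilde y(t,\cdot)$ being nondecreasing and $1$-Lipschitz, one sees that for $z_1<z_2$,
\begin{equation*}
b(t,z_2)-b(t,z_1)\le C(z_2-z_1),
\end{equation*}
where $C$ depends only on $T$ and $H_\infty(0)$. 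The term coming from $\tilde U$ is controlled by $u_x\le D$ (it can only contribute a one-sided bound), while the $\tilde P$-term is genuinely Lipschitz since $p(t,\cdot)\in H^1(\Real)$ with $\|p_x(t,\cdot)\|_\infty\le D$ by \eqref{est:pxpinf}. This one-sided bound is the heart of the matter.

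\emph{Step 2: existence.} Since $b$ is bounded and, for fixed $t$, can be approximated (it is measurable in $t$, continuous in $z$ away from the H\"older issue), I would invoke a standard Carath\'eodory/Peano existence theorem on $[0,T]$: the map $t\mapsto b(t,z)$ is bounded and, for each fixed $\xi$, one builds a solution by the usual Arzel\`a--Ascoli compactness argument on Euler polygons, using $|b|\le C$ for equicontinuity. One could alternatively reuse the already-established Lipschitz continuity of $\tilde y$ and the estimate \eqref{nest:Lip}, which says precisely that $\zeta\mapsto\zeta+I(s,t,\tilde y(t,\zeta))+\tilde U(t,\zeta)(s-t)$ is an $O((s-t)^{3/2})$ approximation of the flow; this makes a direct construction of $k$ as a limit of discretized flows natural and keeps constants tracked in terms of $T$ and $H_\infty(0)$.

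\emph{Step 3: uniqueness and the two-sided increment bound \eqref{Gron:k}.} Here is where the one-sided Lipschitz bound does double duty. Let $k_1,k_2$ be two solutions with $k_1(0,\xi)\le k_2(0,\xi)$ (for uniqueness take equal initial data; for \eqref{Gron:k} take $k_i(0,\cdot)=\xi_i$ with $\xi_1<\xi_2$). Set $\delta(t)=k_2(t)-k_1(t)$. As long as $\delta(t)\ge 0$ we have $\dot\delta=b(t,k_2)-b(t,k_1)\le C\delta$, so $\delta(t)\le e^{Ct}\delta(0)$, which is the upper bound in \eqref{Gron:k}; in particular two solutions started from the same point cannot separate in the forward direction of increasing order. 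For the lower bound one argues that $\delta$ cannot reach $0$: if $\delta(t_0)=0$ at some first time $t_0>0$ then on $[0,t_0]$ we have $\dot\delta\ge -C\delta$ (again by the one-sided bound applied with the roles of $k_1,k_2$ reversed — note $\delta\ge0$ there), hence $\delta(t)\ge e^{-Ct}\delta(0)>0$, a contradiction; this simultaneously yields uniqueness (equal initial data forces $\delta\equiv 0$) and the left inequality in \eqref{Gron:k}, and shows $k(t,\cdot)$ is strictly increasing.

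\emph{Step 4: joint Lipschitz continuity.} Finally, $|k(t,\xi)-k(s,\eta)|\le|k(t,\xi)-k(s,\xi)|+|k(s,\xi)-k(s,\eta)|$; the first term is $\le C|t-s|$ from $|b|\le C$ (Step 1), and the second is $\le e^{CT}|\xi-\eta|$ from \eqref{Gron:k} (Step 3), giving Lipschitz continuity on $[0,T]\times\Real$ and hence a.e.\ differentiability.

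\emph{Main obstacle.} The one genuinely delicate point is Step 1, establishing the one-sided Lipschitz estimate $b(t,z_2)-b(t,z_1)\le C(z_2-z_1)$ rigorously, since $\tilde U(t,\cdot)$ is only H\"older and $\tilde y(t,\cdot)$ is merely Lipschitz and nondecreasing (not strictly increasing). One cannot differentiate naively; instead one must write $\tilde U(t,z_2)-\tilde U(t,z_1)=u(t,\tilde y(t,z_2))-u(t,\tilde y(t,z_1))$ and exploit that $u_x(t,\cdot)\le D$ a.e.\ together with $\tilde y(t,z_2)\ge\tilde y(t,z_1)$ to bound this difference above by $D(\tilde y(t,z_2)-\tilde y(t,z_1))\le D(z_2-z_1)$, while handling the (possibly large negative) part harmlessly. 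Coupling this with the boundedness of $\tilde U,\tilde P$ and the true Lipschitz bound on $\tilde P$ in the product $\tilde U^3-2\tilde P\tilde U+\tilde U$ — each factor monotone-controlled from above — is the crux; once this is in place, Steps 2--4 are routine Gr\"onwall-type arguments.
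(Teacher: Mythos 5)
Your overall architecture (Carath\'eodory existence, Gronwall for uniqueness and for \eqref{Gron:k}, joint Lipschitz continuity by splitting the increment) matches the paper, but the core analytic ingredient you propose --- a \emph{one-sided} Lipschitz bound on $b(t,\cdot)$ derived from $u_x\le D$ --- is both insufficient for the conclusions and not actually established by your argument. Insufficient: a one-sided bound $b(t,z_2)-b(t,z_1)\le C(z_2-z_1)$ for $z_1<z_2$ does give forward uniqueness and the upper half of \eqref{Gron:k} (via $\frac{d}{dt}\delta^2\le 2C\delta^2$), but it gives no control of $\dot\delta$ from below; your claim that ``reversing the roles of $k_1,k_2$'' yields $\dot\delta\ge-C\delta$ is precisely the inequality a one-sided condition does not provide, so the lower bound in \eqref{Gron:k} and the strict monotonicity of $k(t,\cdot)$ do not follow. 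Not established: in $b=\tilde U^3-2\tilde P\tilde U+\tilde U$ the contribution $-2\tilde P(t,z_2)\bigl(\tilde U(t,z_2)-\tilde U(t,z_1)\bigr)$ has $\tilde P\ge0$, so a large \emph{negative} excursion of $\tilde U(t,z_2)-\tilde U(t,z_1)$ --- which $u_x\le D$ permits and which H\"older continuity only bounds by $-C\sqrt{z_2-z_1}$ --- produces a large \emph{positive} term of order $\sqrt{z_2-z_1}$, destroying the claimed one-sided upper bound for $b$.

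The idea you are missing, and the raison d'\^etre of the change of variables $\zeta=x+G(t,x)$, is that $\tilde U(t,\cdot)$ is genuinely (two-sided) Lipschitz with constant $1$ in the $\zeta$ variable. The paper obtains this from Cauchy--Schwarz,
\begin{equation*}
\vert \tilde U(t,\zeta_2)-\tilde U(t,\zeta_1)\vert \le \sqrt{\tilde y(t,\zeta_2)-\tilde y(t,\zeta_1)}\,\sqrt{\int_{\tilde y(t,\zeta_1)}^{\tilde y(t,\zeta_2)} u_x^2(t,z)\,dz},
\end{equation*}
combined with the sandwich $G(t,\tilde y(t,\zeta))\le \tilde H(t,\zeta)\le G(t,\tilde y(t,\zeta)+)$ (handled carefully at jumps via the points $\zeta^{\pm}$) and the fact that \emph{both} $\tilde y(t,\cdot)$ and $\tilde H(t,\cdot)$ are nondecreasing and $1$-Lipschitz: each square root is then bounded by $\sqrt{\zeta_2-\zeta_1}$, so the product is $\le\zeta_2-\zeta_1$. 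Together with the true Lipschitz bound on $\tilde P$ coming from $\norm{p_x(t,\cdot)}_\infty\le C$, this yields the full two-sided estimate \eqref{Lip:f}, after which uniqueness, both inequalities in \eqref{Gron:k}, and strict monotonicity all follow from forward and backward Gronwall arguments. The one-sided condition $u_x\le D$ plays no role in this lemma; it enters only much later in the paper. Your Steps 2 and 4 are fine once \eqref{Lip:f} is in place.
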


\begin{proof}
Definition~\ref{def:dissol2} together with the Lipschitz continuity of $\tilde y(t,\zeta)$ guarantees that the assumptions of Carath{\'e}odory's existence theorem, see e.g. \cite[Chapter 2, Theorem 1.1]{CL}, are satisfied and hence, \eqref{Car:ODE} has for each fixed $\xi\in \Real$ at least one solution, which is absolutely continuous. It therefore remains to show that there exists at most one solution to \eqref{Car:ODE}. Our contradiction argument is based on the fact that for fixed $\xi \in \Real$ solving \eqref{Car:ODE} is equivalent to solving the integral equation
\begin{equation}\label{int:ode}
k(t, \xi)= \xi+\int_0^t (\tilde U^3-2\tilde P\tilde U+ \tilde U)(l, k(l, \xi))dl.
\end{equation}

Given $\xi\in \Real$, assume that there exist two solutions $k_1(t, \xi)$ and $k_2(t, \xi)$ to \eqref{int:ode} with $k_1(0, \xi)=k_2(0, \xi)$. If we can show that the function 
\begin{equation}\label{def:f}
f(t, \zeta)= (\tilde U^3-2\tilde P\tilde U+ \tilde U)(t,\zeta)
\end{equation}
satisfies 
\begin{equation}\label{Lip:f}
\vert f(t,\zeta)-f(t,\eta)\vert \leq C\vert \zeta-\eta\vert \quad \text{ for all } \zeta,\eta\in \Real \text{ and } t\geq 0,
\end{equation}
then we have for fixed $\xi\in \Real$, 
\begin{align*}
\vert k_2-k_1\vert (t, \xi)& \leq \int_0^t \vert f(l, k_2(l, \xi))-f(l, k_1(l, \xi))\vert dl \leq C\int_0^t \vert k_2-k_1\vert (l, \xi) dl,
\end{align*}
and using Gronwall's lemma, we can deduce that $k_2(t,\xi)=k_1(t,\xi)$ for every $t\in [0,T]$.

Since $\tilde U$ and $\tilde P$, given by \eqref{def:tiU} and \eqref{def:tiP}, are uniformly bounded due to \eqref{est:uinf} and \eqref{est:pxpinf}, it suffices to prove \eqref{Lip:f} for $f$ replaced by $\tilde U$ and $\tilde P$. A closer look at  \eqref{def:tiy}, \eqref{def:tiH}, \eqref{def:L}, and \eqref{prop:L} reveals that 
\begin{equation*}
\tilde y(t,\zeta)+G(t, \tilde y(t,\zeta))\leq \tilde y(t,\zeta)+\tilde H(t,\zeta)\leq \tilde y(t,\zeta)+ G(t,\tilde y(t,\zeta)+), 
\end{equation*}
which is equivalent to 
\begin{equation*}
G(t, \tilde y(t,\zeta))\leq \tilde H(t,\zeta)\leq G(t, \tilde y(t,\zeta)+)
\end{equation*}
and hence
\begin{equation*}
\tilde H(t,\zeta)= \sigma G(t, \tilde y(t, \zeta))+ (1-\sigma) G(t, \tilde y(t,\zeta)+) \quad \text{ for some } \sigma\in [0,1].
\end{equation*}
This means especially, given $\zeta \in \Real$ there exist $\zeta^-\leq\zeta \leq\zeta^+$ unique such that 
\begin{equation*}
\tilde y(t, \zeta^-)=\tilde y(t,\zeta)=\tilde y(t, \zeta^+),
\end{equation*}
\begin{equation*}
\tilde H(t, \zeta^-)= G(t, \tilde y(t, \zeta)) \quad \text{ and } \quad \tilde H(t, \zeta^+)=G(t, \tilde y(t, \zeta)+).
\end{equation*}
By the definition of $\D$, we then have for any $\zeta_1<\zeta_2$ such that $\tilde y(t, \zeta_1)\not= \tilde y(t, \zeta_2)$,
\begin{align*}
\vert \tilde U(t, \zeta_2)-\tilde U(t, \zeta_1)\vert 
& = \vert u(t, \tilde y(t, \zeta_2^-))-u(t, \tilde y(t, \zeta_1^+))\vert \\
& \leq \sqrt{\tilde y(t, \zeta_2^-)-\tilde y(t, \zeta_1^+)}\sqrt {\int_{\tilde y(t, \zeta_1^+)}^{\tilde y(t, \zeta_2^-)} u_x^2(t, z) dz}\\
& \leq \sqrt{\tilde y(t, \zeta_2^-)-\tilde y(t, \zeta_1^+)}\sqrt{G(t, \tilde y(t, \zeta_2^-))-G(t,\tilde y(t, \zeta_1^+))}\\
& \leq \sqrt{\tilde y(t, \zeta_2^-)-\tilde y(t, \zeta_1^+)}\sqrt{\tilde H(t, \zeta_2^-)-\tilde H(t, \zeta_1^+)}\\
& \leq \vert \zeta_2^--\zeta_1^+\vert \leq \zeta_2-\zeta_1,
\end{align*}
since both $\tilde y(t,\cdot)$ and $\tilde H(t, \cdot)$ are Lipschitz continuous with Lipschitz constant at most one. 

For $\tilde P(t, \cdot)$ we can use \eqref{def:tiP} combined with \eqref{est:pxpinf} as follows
\begin{align}\label{P:Lip}
\vert \tilde P(t, \zeta_2)- \tilde P(t, \zeta_1)\vert &  \leq \norm{p_x(t,\cdot)}_\infty\vert \tilde y(t, \zeta_2)-\tilde y(t, \zeta_1)\vert  \leq C\vert \zeta_2-\zeta_1\vert,
\end{align}
where $C$ denotes a positive constant. This finishes the proof of \eqref{Lip:f} and hence \eqref{int:ode} has exactly one solution. 

Next, note that for any $\xi$ and $\eta \in \Real$, by \eqref{Lip:f},
\begin{equation}\label{lb:kxi2}
\vert k(t, \xi)-k(t, \eta)\vert \leq \vert \xi-\eta\vert + C\int_0^t \vert k(l, \xi)-k(l, \eta)\vert dl,
\end{equation}
and, by Gronwall's inequality,
\begin{equation}\label{k:Lip1}
 \vert k(t,\xi)-k(t,\eta)\vert  \leq e^{Ct}\vert \xi-\eta\vert \text{ for any } t\in [0,T].  
\end{equation}
Furthermore, $\tilde P$ and $\tilde U$ are uniformly bounded due to \eqref{est:uinf} and \eqref{est:pxpinf}, and hence there exists a constant $C$ such that 
\begin{equation}\label{k:Lip2}
\vert k(t,\xi)-k(s, \xi)\vert \leq C\vert t-s\vert \quad \text{ for all } s, t\in [0,T] \text{ and }\zeta \in \Real.
\end{equation}
Thus \eqref{k:Lip1} and \eqref{k:Lip2} imply that $k$ is Lipschitz continuous and hence differentiable almost everywhere on $[0,T]\times \Real$.

We show by contradiction that for each $t\in [0,T]$, the function $k(t, \cdot)$ is strictly increasing. Assume the opposite, i.e., there exists $\hat t\in [0,T]$ and $\xi_1<\xi_2$ such that  
\begin{equation*}
k(t, \xi_1)< k(t, \xi_2) \quad \text{ for all } t\in [0, \hat t) \quad \text{ and } \quad k(\hat t, \xi_1)=k(\hat t, \xi_2).
\end{equation*}
By \eqref{Lip:f},
\begin{equation*}
-C\int_s^{\hat t} k(l, \xi_2)-k(l, \xi_1)dl + k(s, \xi_2)-k(s, \xi_1)\leq k(\hat t, \xi_2)-k(\hat t, \xi_1),
\end{equation*}
for all $0\leq s\leq \hat t$ and defining $\tilde k(l,\xi_2)=k(\hat t-l,\xi_2)$, it follows that 
\begin{equation*}
\tilde k(s, \xi_2)-\tilde k(s, \xi_1)\leq \tilde k(0, \xi_2)-\tilde k(0, \xi_1)+ C\int_0^s \tilde k(l, \xi_2)-\tilde k(l, \xi_1),
\end{equation*}
for all $0\leq s\leq \hat t$. Applying Gronwalls inequality then yields 
\begin{align}\label{lb:kxi}
\xi_2-\xi_1& = \tilde k(\hat t, \xi_2)-\tilde k(\hat t, \xi_1)\\ \nonumber
& \leq e^{C\hat t} (\tilde k(0, \xi_2)-\tilde k(0,\xi_1)) = e^{C\hat t} (k(\hat t, \xi_2)-k(\hat t, \xi_1))=0, 
\end{align}
which contradicts the assumption $\xi_1<\xi_2$. Thus, for each $t\in [0,T]$, $k(t, \cdot)$ is a strictly increasing function and \eqref{Gron:k} follows from \eqref{k:Lip1} and \eqref{lb:kxi}.
\end{proof}

Thus, we have proven that $k(t,\xi)$ defines a change of variables on $[0,T] \times \Real$ through the mapping
\begin{equation*}
(t, \xi) \to (t, \zeta)=(t,k(t,\xi)),
\end{equation*}
which means that $y$, $U$, $V$, and $H$ given by \eqref{def:yUH} are well-defined. Furthermore, we have, by \eqref{int:ode} and \eqref{def:I}, for $s\geq t$,
\begin{align*}
k(t,\xi)& + I(s,t, y(t,\xi))+ U(t,\xi)(s-t)\\
&=k(s, \xi)- \int_t^s (\tilde U^3-2\tilde P\tilde U+ \tilde U)(l, k(l, \xi)) dl\\
& \quad + \int_t^s (u^3-2pu)(l, y(t,\xi)+U(t,\xi)(l-t))dl + U(t,\xi)(s-t)\\
& = k(s, \xi) - \int_t^s u(l, y(l, \xi))-u(t, y(t,\xi))dl\\
& \quad -\int_t^s (u^3-2pu)(l, y(l, \xi))-(u^3-2pu)(l, y(t,\xi)+ U(t,\xi)(l-t))dl.
\end{align*}
Recalling \eqref{est:uinf}, \eqref{est:pxpinf}, and \eqref{u:Hol} and observing that $y(t, \xi)$ is Lipschitz continuous, since $\tilde y(t,\zeta)$ and $k(t,\xi)$ are, we have 
\begin{equation}\label{err:k}
\vert k(t,\xi)-k(s, \xi) + I(s,t, y(t,\xi))+ U(t,\xi)(s-t)\vert  \leq C ( s-t)^{3/2},
\end{equation}
where  $C$ denotes a positive constant. Keeping \eqref{err:k} in mind and recalling \eqref{nest:Lip} with $\zeta=k(t,\xi)$ we end up with 
\begin{align}\nonumber
\vert y(s,\xi)&-y(t, \xi)  -\int_t^s U(l, \xi) dl\vert \\ \nonumber
& \leq\vert y(s, \xi)-y(t, \xi) - U(t, \xi) (s-t)\vert + \vert \int_t^s U(l, \xi)-U(t, \xi) dl\vert \\ \nonumber
& \leq \vert \tilde y(s, k(s, \xi))- \tilde y(s, k(t,\xi)+ I(s,t,y(t, \xi))+ U(t, \xi) (s-t))\vert \\ \nonumber
& \quad + \vert \tilde y(s, k(t,\xi)+ I(s,t,y(t, \xi))+ U(t, \xi) (s-t))- y(t,\xi)-U(t,\xi)(s-t)\vert \\ \nonumber
& \quad + \vert \int_t^s u(l, y(l, \xi))-u(t,y(t,\xi)) dl\vert \\ \label{last:est}
& \leq C (s-t)^{3/2}.
\end{align}

Thus for any $\Delta t>0$ such that $N\Delta t= s-t$, we have, using \eqref{last:est},
\begin{align*}
\vert y(s,\xi)-y(t,\xi)& -\int_t^s U(l, \xi) dl\vert \\
& \leq \sum_{n=1}^N \vert y(t+ n \Delta t, \xi)-y(t+ (n-1) \Delta t, \xi)- \int_{t+(n-1)\Delta t}^{t+n\Delta t} U(l, \xi) dl\vert \\
& \leq C \sum_{n=1}^N \Delta t^{3/2}= C T \Delta t^{1/2}.
\end{align*}
Since we can choose any $\Delta t>0$, it follows that 
\begin{equation*}
\vert y(s,\xi)-y(t,\xi)-\int_t^s U(l, \xi) dl\vert=0,
\end{equation*}
which means that $y(t,\xi)$ satisfies for any $\xi\in \Real$ the integral equation 
\begin{equation}\label{inteq:y}
y(s, \xi)= y(t,\xi)+ \int_t^s U(l, \xi) dl=  y(t, \xi)+ \int_t^s u(l, y(l, \xi)) dl.
\end{equation}

Here some observations are important. First, $y(t, \xi)$ is Lipschitz continuous and $U(t,\xi)$ is H{\"o}lder continuous with exponent one-half on $[0,T]\times \Real$. Thus we can write
\begin{equation*}
y_t(t,\xi)= U(t,\xi)=u(t,y(t,\xi)),
\end{equation*}
and in particular, $y(t,\xi)$ is a characteristic. 

Second, introduce
\begin{equation} \label{def:P}
P(t,\xi)= \tilde P(t, k(t,\xi)),
\end{equation}
so that $k(t,\xi)$, given by \eqref{int:ode} reads
\begin{equation*}
k(t,\xi)= \xi + \int_0^t (U^3-2PU+U)(l, \xi) dl.
\end{equation*}
Then we can write, 
combining \eqref{def:tiH} and \eqref{def:yUH}, 
\begin{equation*}
y(t,\xi)+H(t,\xi)= k(t,\xi),
\end{equation*}
which yields, using \eqref{inteq:y} the integral equation,
\begin{equation}\label{inteq:H}
H(s, \xi) = H(t, \xi)+ \int_t^s (U^3-2PU) (l, \xi)dl.
\end{equation}

\subsection{The integral equation for $U$.} Introduce 
\begin{equation}\label{def:Q}
Q(t,\xi)=\tilde Q(t,k(t,\xi)),
\end{equation}
where $\tilde Q(t,\zeta)$ and $k(t,\xi)$ are given by \eqref{def:tiQ} and \eqref{Car:ODE}, respectively.
The goal of this section is to show that 
\begin{equation}\label{inteq:U}
U(s, \xi)= U(t,\xi)-\int_t^s Q(l, \xi) dl.
\end{equation}

According to Definition~\ref{def:dissol2} \eqref{cond:dissol2:7}, one has for all $\phi\in C^\infty_c([t,s]\times \Real)$ with $t<s$  
\begin{equation*}
\int_t^s \int_\Real (u\phi_t+ \frac12 u^2\phi_x-p_x\phi)(l,x)dx dl= \int_\Real u\phi(s,x) dx-\int_\Real u\phi(t,x) dx,
\end{equation*}
but also here the set of admissible test functions can be enlarged. For example, the above equality remains valid for \begin{equation*}
\phi_\varepsilon(t,x)= \frac1{\varepsilon}\psi\left(\frac{y(t,\xi)-x}{\varepsilon}\right),
\end{equation*}
where $\varepsilon>0$ and $\psi$ is a standard Friedrichs mollifier and hence belongs to $C^\infty_c(\Real)$. Furthermore,
\begin{equation*}
\lim_{\varepsilon\to 0} \int_\Real u\phi_\varepsilon(t,x)dx= u(t, y(t,\xi))= U(t,\xi),
\end{equation*}
and hence 
\begin{align}\nonumber
U(s,\xi)-U(t,\xi)& =\lim_{\varepsilon\to 0} \int_\Real (u\phi_\varepsilon(s,x)-u\phi_\varepsilon(t,x)) dx\\ \label{dif:U}
& = \lim_{\varepsilon\to 0} \int_t^s \int_\Real \big(u\phi_{\varepsilon,t}+ \frac12 u^2\phi_{\varepsilon,x}-p_x\phi_\varepsilon\big)(l,x) dxdl,
\end{align}
where the above limit can be evaluated using the dominated convergence theorem. 

Indeed, given $\varepsilon>0$, introduce the function $I_{\varepsilon}:[0,T]\to \Real$ given by 
\begin{align*}
I_\varepsilon(l)&=\int_\Real (u\phi_{\varepsilon,t}+\frac12 u^2\phi_{\varepsilon,x}-p_x\phi_\varepsilon)(l,x) dx\\
& = \int_\Real (u\phi_{\varepsilon,t}+\frac12 u^2\phi_{\varepsilon,x})(l,x)dx+\int_\Real p\phi_{\varepsilon,x}(l,x) dx\\
& = I_{\varepsilon,1}(l)+ I_{\varepsilon,2}(l).
\end{align*}
Since $\psi$ is a standard Friedrichs mollifier, \eqref{inteq:y} and \eqref{est:uinf} imply that there exists a constant $M$, independent of $\varepsilon$, such that 
\begin{equation*}
\supp\phi_\varepsilon(t,\cdot)\in [-M,M] \quad \text{ for all }t\in [0,T].
\end{equation*}
Recalling Definition~\ref{def:dissol2} \eqref{cond:dissol2:2}, \eqref{est:uinf}, and \eqref{inteq:y}, we can therefore conclude that $I_{\varepsilon,1}(t)$ is continuous and hence measurable. 

For $I_{\varepsilon,2}(t)$, on the other hand, we will show that it is a function of bounded variation on $[0,T]$ and hence measurable. Using Definition~\ref{def:dissol2} \eqref{cond:dissol2:5}, \eqref{est:pxpinf}, \eqref{est:uinf}, and \eqref{inteq:y}, we have for any finite partition $\{t_i\}$ of $[0,T]$ with $t_i<t_{i+1}$, 
\begin{align*}
\sum_{i} \vert I_{\varepsilon,2}(t_i)-I_{\varepsilon,2}(t_{i-1})\vert & \leq \sum_i \vert \int_{-M}^M (p(t_i,x)-p(t_{i-1},x))\phi_{\varepsilon,x}(t_i,x)dx \vert \\
& \quad + \sum_i \vert \int_{-M}^M p(t_{i-1},x)(\phi_{\varepsilon,x}(t_i,x)-\phi_{\varepsilon,x}(t_{i-1},x))dx \vert \\
& \leq \frac{2M}{\varepsilon^2} \norm{\psi'}_\infty \sup_{x\in[-M,M]} T.V.(p(\cdot,x))\\
& \quad + \frac{2M C}{\varepsilon^3}\norm{\psi''}_\infty \sum_i \vert y(t_i, \xi)-y(t_{i-1}, \xi)\vert\\
& \leq C (1+ T).
\end{align*}
Thus, $I_{\varepsilon}$ is a measurable function on [0,T].

Next we compute the limit of $I_\varepsilon(l)$ as $\varepsilon\to 0$. Write
\begin{align*}
I_\varepsilon(l)&= \int_\Real  (u\phi_{\varepsilon,t}+ \frac12 u^2\phi_{\varepsilon,x}-p_x\phi_\varepsilon)(l,x) dx\\
& \qquad \qquad = \frac12 u(l, y(l, \xi))^2 \int_\Real \frac1{\varepsilon^2}\psi'\left(\frac{y(l,\xi)-x}{\varepsilon}\right)dx\\
& \quad \qquad \qquad - \frac12 \int_\Real (u(l,x)-u(l,y(l, \xi)))^2\frac1{\varepsilon^2} \psi'\left(\frac{y(l,\xi)-x}{\varepsilon}\right)dx\\
&\quad  \qquad \qquad -\int_\Real (p_x(l,x)-p_x(l, y(l, \xi)))\frac{1}{\varepsilon} \psi\left(\frac{y(l,\xi)-x}{\varepsilon}\right) dx\\
& \quad \qquad \qquad - p_x(l, y(l, \xi))\\
& \qquad \qquad = - \frac12 \int_\Real (u(l,x)-u(l,y(l, \xi)))^2\frac1{\varepsilon^2} \psi'\left(\frac{y(l,\xi)-x}{\varepsilon}\right)dx\\
& \quad \qquad \qquad -\int_\Real (p_x(l,x)-p_x(l, y(l, \xi)))\frac{1}{\varepsilon} \psi\left(\frac{y(l,\xi)-x}{\varepsilon}\right) dx\\
& \quad \qquad \qquad - p_x(l, y(l, \xi)).
\end{align*}

For the first integral on the right hand side observe that $F(l,\cdot)$, given by \eqref{def:F}, is absolutely continuous and satisfies
\begin{equation*}
\vert u(l,x)-u(l,y)\vert\leq \vert x-y\vert^{1/2}\vert F(l,x)-F(l,y)\vert^{1/2},
\end{equation*}
which implies
\begin{align*}
& \left\vert \int_\Real (u(l,x)-u(l,y(l, \xi)))^2\frac1{\varepsilon^2} \psi'\left(\frac{y(l,\xi)-x}{\varepsilon}\right)dx\right\vert\\
& \qquad \qquad \qquad = \frac{1}{\varepsilon} \left\vert \int_{-1}^1 (u(l, y(l, \xi)-\varepsilon \eta)-u(l, y(l, \xi)))^2 \psi'(\eta) d\eta\right\vert \\
&\qquad \qquad \qquad  \leq \vert F(l, y(l, \xi)+\varepsilon)-F(l, y(l, \xi)-\varepsilon)\vert. 
\end{align*}
Since $F(l, \cdot)$ is absolutely continuous, 
\begin{equation*}
\lim_{\varepsilon\to 0}\left\vert \int_\Real (u(l,x)-u(l,y(l, \xi)))^2\frac1{\varepsilon^2} \psi'\left(\frac{y(l,\xi)-x}{\varepsilon}\right)dx\right\vert=0.
\end{equation*}

 For the second integral term on the right hand side, we can write 
 \begin{align*}
 & \left\vert \int_\Real (p_x(l, x)-p_x(l, y(t,\xi)))\frac1{\varepsilon} \psi\left(\frac{y(t,\xi)-x}{\varepsilon}\right)dx \right\vert\\
 & \qquad \qquad \qquad = \left\vert \int_{-1}^1 (p_x(l, y(l, \xi)-\varepsilon\eta)-p_x(l, y(l, \xi)))\psi(\eta) d\eta\right\vert\\
 & \qquad \qquad \qquad \leq \sup_{\eta\in [-1,1]}\vert p_x(l, y(l, \xi)+\varepsilon\eta)-p_x(l, y(l, \xi))\vert ,
 \end{align*}
 and, by  \eqref{cond:limpx}, 
 \begin{equation*}
 \lim_{\varepsilon\to 0}  \left\vert \int_\Real (p_x(l, x)-p_x(l, y(t,\xi)))\frac1{\varepsilon} \psi\left(\frac{y(t,\xi)-x}{\varepsilon}\right)dx \right\vert=0.
 \end{equation*}
 Thus, recalling  \eqref{def:Q},
 \begin{equation*}
 \lim_{\varepsilon\to 0 } I_\varepsilon(l)= -p_x(l, y(l, \xi))=-Q(l, \xi).
 \end{equation*}
 
 A closer look at the above estimates reveals that we have shown in addition, that there exists a constant $\tilde M$ such that 
 \begin{equation*}
\vert  I_\varepsilon(l)\vert \leq \tilde M \quad \text{ for all } \varepsilon>0 \text{ and } l\in [0,T].
\end{equation*}
Since $T$ is finite, $\tilde M$ can be seen as a dominating function and hence all assumptions of the dominated convergence theorem are satisfied. Thus, by \eqref{dif:U},
 \begin{equation*}
 U(s,\xi)-U(t,\xi)=\lim_{\varepsilon\to 0} \int_t^s I_{\varepsilon}(l)dl =-\int_t^s Q(l, \xi) dl,
 \end{equation*}
 and the function $Q(\cdot, \xi)$ is integrable. 
 Furthermore, we have due to \eqref{est:pxpinf} 
 \begin{equation*}
 \vert U(t,\xi)-U(s,\xi)\vert \leq C\vert t-s\vert, 
 \end{equation*}
 where $C$ is a positive constant and recalling that $\tilde U(t,\cdot)$ and $k(t, \cdot)$ are Lipschitz continuous, it follows that $U(t,\xi)$ is Lipschitz continuous on $[0,T]\times \Real$.
 
 \subsection{The integral equation for $y_\xi$}
 So far we have shown that $y$, $U$, and $H$ are solutions to the following system of integral equations
 \begin{subequations}\label{sys:intyUH}
 \begin{align}\label{sys:intyUH1}
 y(t,\xi)&=y(0, \xi)+\int_0^t  U(l,\xi)dl,\\ \label{sys:intyUH2}
 U(t,\xi)&=U(0, \xi)-\int_0^t Q(l,\xi)dl,\\ \label{sys:intyUH3}
 H(t,\xi)&=H(0, \xi)+ \int_0^t  (U^3-2PU)(l,\xi)dl.
 \end{align}
 \end{subequations}
In addition, we have 
 \begin{equation}\label{ident:relcl}
 y(t,\xi)+H(t,\xi)= k(t,\xi),
 \end{equation}
 where $k(t,\xi)$ is the unique solution to \eqref{Car:ODE}. As we will see next $k(t,\xi)$ does not only define a change of variables on $[0,T]\times \Real$. In fact, for each $t\geq 0$ the function $k(t,\cdot)$ is a relabelling function. Recalling Lemma~\ref{rellem} and Lemma~\ref{lem:k} the claim follows if $k_\xi(t,\cdot)-1 $ belongs to $L^2(\Real)$ for all $t\in [0,T]$.
 
 For $x= (x_1, x_2, x_3)\in \mathbb{R}^3$, denote by $\norm{\cdot}$ the following norm on $\Real^3$,
 \begin{equation*}
 \norm{x}=\vert x_1\vert + \vert x_2\vert + \vert x_3\vert.
 \end{equation*}
  Then, the vector $Z=(y-\id,U, H)$ satisfies the following lemma.

 \begin{lemma}
 Given $Z=(y-\id,U,H)$, $\xi<\bar \xi$, and $0\leq t<s\leq T$, then 
 \begin{align} \nonumber
 \|(Z(s,\bar \xi)&-Z(s,\xi))-(Z(t,\bar \xi)-Z(t,\xi))\|\\ \nonumber
& \leq (e^{C(s-t)}-1)\norm{Z(t,\bar \xi)-Z(t,\xi)}\\ \label{help:est}
& \phantom{a}+ Ce^{C(s-t)}\int_t^s \vert U(l,  \xi)\vert + \max_{\hat \xi\in [\xi, \bar\xi]} U^2(l,\hat \xi) +\hat P(l, \xi)+\hat P(l, \bar\xi)dl \vert \bar\xi-\xi\vert,
\end{align}
where 
 \begin{equation}\label{def:hatP}
 \hat P(t, \xi)= \frac14 \int_\Real e^{-\vert y(t,\xi)-y(t,\eta)\vert} (U^2y_\xi+H_\xi)(t,\eta) d\eta.
 \end{equation} 
 \end{lemma}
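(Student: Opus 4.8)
The plan is to take the $\xi$-increment of the three integral equations \eqref{sys:intyUH}, reduce the assertion to a Gronwall argument for $\Delta(l):=Z(l,\bar\xi)-Z(l,\xi)\in\Real^3$, and carry out the required pointwise-in-$l$ bounds. Since the components of $Z$ solve integral equations with integrands $U$, $-Q$ and $U^3-2PU$, one has componentwise
\begin{equation*}
\Delta(s)-\Delta(t)=\int_t^s\bigl(U(l,\bar\xi)-U(l,\xi),\ -(Q(l,\bar\xi)-Q(l,\xi)),\ (U^3-2PU)(l,\bar\xi)-(U^3-2PU)(l,\xi)\bigr)\,dl,
\end{equation*}
hence $\|\Delta(s)-\Delta(t)\|\le\int_t^s\|R(l)\|\,dl$ with $\|R(l)\|$ the sum of the moduli of the three entries. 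The whole task is to bound $\|R(l)\|$ by $C\|\Delta(l)\|$ plus a term proportional to $\bar\xi-\xi$ with the integrand of \eqref{help:est}, and then to invoke Gronwall's inequality. Two of the three entries are immediate: $|U(l,\bar\xi)-U(l,\xi)|$ is the second component of $\Delta(l)$, hence $\le\|\Delta(l)\|$; and, factoring $U^3-2PU=U(U^2-2P)$ and using that $U=u(l,y(l,\cdot))$ and $P=p(l,y(l,\cdot))$ are uniformly bounded by $C$ (by \eqref{est:uinf}, \eqref{est:pxpinf}), all terms are $\le C\|\Delta(l)\|$ except $|P(l,\bar\xi)-P(l,\xi)|\,|U(l,\xi)|$, which I would control by inserting the convolution formula for $p$ and using $\bigl||y(l,\bar\xi)-z|-|y(l,\xi)-z|\bigr|\le|y(l,\bar\xi)-y(l,\xi)|\le C(\bar\xi-\xi)$ (here $\tilde y(l,\cdot)$ is $1$-Lipschitz and $y=\tilde y(l,k(l,\cdot))$, so \eqref{Gron:k} applies), giving a contribution $\le C\|\Delta(l)\|+C|U(l,\xi)|(\bar\xi-\xi)$.

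The core is the estimate for $|Q(l,\bar\xi)-Q(l,\xi)|$. I would use the Lagrangian representation
\begin{equation*}
Q(l,\xi)=p_x(l,y(l,\xi))=-\frac14\int_\Real\sgn(\xi-\eta)\,e^{-|y(l,\xi)-y(l,\eta)|}\,(U^2y_\xi+V_\xi)(l,\eta)\,d\eta,
\end{equation*}
obtained by pushing the convolution formula for $p_x$ through $z=y(l,\eta)$, using $(u^2+u_x^2)\,dx=y_\#(V_\xi\,d\xi)$ and that $\sgn(y(l,\xi)-y(l,\eta))=\sgn(\xi-\eta)$ wherever this sign is nonzero (since $y(l,\cdot)$ is nondecreasing). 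Splitting the $\eta$-integral of $Q(l,\bar\xi)-Q(l,\xi)$ over $\{\eta<\xi\}$, $\{\xi\le\eta\le\bar\xi\}$ and $\{\eta>\bar\xi\}$: on the two outer sets the two sign factors agree, and monotonicity of $y(l,\cdot)$ gives $\bigl|e^{-|y(l,\bar\xi)-y(l,\eta)|}-e^{-|y(l,\xi)-y(l,\eta)|}\bigr|\le e^{-|y(l,\xi)-y(l,\eta)|}(y(l,\bar\xi)-y(l,\xi))$ on $\{\eta<\xi\}$ (and its $\bar\xi$-analogue on $\{\eta>\bar\xi\}$), so, combined with $P\le\hat P$ (a consequence of $\mu_{ac}(l)\le\nu(l)$), the outer pieces are $\le(y(l,\bar\xi)-y(l,\xi))\bigl(\hat P(l,\xi)+\hat P(l,\bar\xi)\bigr)$. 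On the middle set the sign factors disagree, so only $|\,\cdot\,|\le2$ is available and this piece is $\le\tfrac12\max_{\hat\xi\in[\xi,\bar\xi]}U^2(l,\hat\xi)\,(y(l,\bar\xi)-y(l,\xi))+\tfrac12\int_\xi^{\bar\xi}V_\xi(l,\eta)\,d\eta$; here the key point is $\int_\xi^{\bar\xi}V_\xi(l,\eta)\,d\eta=\mu_{ac}\bigl(l,(y(l,\xi),y(l,\bar\xi))\bigr)\le\nu\bigl(l,(y(l,\xi),y(l,\bar\xi))\bigr)\le H(l,\bar\xi)-H(l,\xi)$, the last inequality because $\{\eta:y(l,\eta)\in(y(l,\xi),y(l,\bar\xi))\}\subseteq(\xi,\bar\xi)$ and $H_\xi\ge0$. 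Using $y(l,\bar\xi)-y(l,\xi)\le C(\bar\xi-\xi)$, $H(l,\bar\xi)-H(l,\xi)\le\|\Delta(l)\|$ and the uniform bounds $\hat P(l,\cdot),U^2(l,\cdot)\le C$, this collapses to $|Q(l,\bar\xi)-Q(l,\xi)|\le C\|\Delta(l)\|+C\bigl(\hat P(l,\xi)+\hat P(l,\bar\xi)+\max_{\hat\xi\in[\xi,\bar\xi]}U^2(l,\hat\xi)\bigr)(\bar\xi-\xi)$.

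Collecting the three entries gives $\|R(l)\|\le C\|\Delta(l)\|+C\,m(l)\,(\bar\xi-\xi)$ with $m(l)=|U(l,\xi)|+\max_{\hat\xi\in[\xi,\bar\xi]}U^2(l,\hat\xi)+\hat P(l,\xi)+\hat P(l,\bar\xi)$, so that, writing $\phi(l)=\|\Delta(l)-\Delta(t)\|$ and using $\|\Delta(l)\|\le\|\Delta(t)\|+\phi(l)$, one gets $\phi(s)\le C(s-t)\|\Delta(t)\|+C(\bar\xi-\xi)\int_t^s m(l)\,dl+C\int_t^s\phi(l)\,dl$; Gronwall's inequality, iterated once more so that the coefficient of $\|\Delta(t)\|$ sharpens from $C(s-t)e^{C(s-t)}$ to $e^{C(s-t)}-1$ and the source integral picks up the factor $e^{C(s-t)}$, then yields \eqref{help:est}. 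The main obstacle is the $Q$-estimate: establishing the Lagrangian representation of $Q$, and above all handling the middle region $[\xi,\bar\xi]$, where the loss of the $\sgn$-cancellation forces one to spend the full increment $y(l,\bar\xi)-y(l,\xi)$ against $\max_{[\xi,\bar\xi]}U^2$ and to control $\int_\xi^{\bar\xi}V_\xi$ by $H(l,\bar\xi)-H(l,\xi)$ via the inclusion $\{\eta:y(l,\eta)\in(y(l,\xi),y(l,\bar\xi))\}\subseteq(\xi,\bar\xi)$; everything else is routine bookkeeping with the uniform bounds from \eqref{est:uinf}, \eqref{est:pxpinf} and the Lipschitz estimate \eqref{Gron:k}.
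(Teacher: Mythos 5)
Your proposal is correct and follows essentially the same route as the paper: take $\xi$-increments of the integral equations \eqref{sys:intyUH}, split the $\eta$-integral in $Q(l,\bar\xi)-Q(l,\xi)$ over $(-\infty,\xi)$, $(\xi,\bar\xi)$, $(\bar\xi,\infty)$, absorb the middle region into $\max_{[\xi,\bar\xi]}U^2$ and the $H$-increment, bound the $U^3-2PU$ difference via the Lipschitz continuity of $P$ along $y$, and close with Gronwall. The only differences are cosmetic: you justify the Lagrangian representation of $Q$ and the inequality $\int_\xi^{\bar\xi}V_\xi\,d\eta\le H(l,\bar\xi)-H(l,\xi)$ explicitly (the paper takes both for granted, the latter via $V_\xi\le H_\xi$), which is fine.
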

 
 \begin{proof}
 Writing 
\begin{align*}
Q(t,\bar \xi)-Q(t,\xi)& =- \frac14 \int_{-\infty}^{\xi} (e^{-y(t, \bar \xi)}-e^{-y(t,\xi)})e^{y(t,\eta)} (U^2y_\xi+V_\xi)(t,\eta) d\eta\\
& \quad +\frac14 \int_{\bar \xi}^\infty (e^{y(t,\bar \xi)}-e^{y(t,\xi)})e^{-y(t,\eta)}(U^2y_\xi+V_\xi)(t,\eta) d\eta\\
& \quad - \frac14 \int_{\xi}^{\bar \xi} (e^{y(t,\eta)-y(t,\bar \xi)}+e^{y(t,\xi)-y(t,\eta)})(U^2y_\xi+V_\xi)(t,\eta)d \eta, 
\end{align*}
and applying $e^{b}-e^{a}\leq e^{b}(b-a) $ for $a<b$, yields
\begin{align*}
\vert Q(t,\bar \xi)-Q(t,\xi)\vert &\leq  (P(t, \xi)+P(t,\bar \xi))\vert y(t,\bar\xi)-y(t,\xi)\vert \\
& \quad + \frac12 \left(\max_{\hat\xi\in [\xi, \bar \xi] }U^2(t,\hat \xi)\vert y(t,\bar\xi)-y(t,\xi)\vert+ \vert H(t,\bar\xi)-H(t,\xi)\vert\right).
\end{align*}
Furthermore, using \eqref{def:P} \eqref{P:Lip}, and \eqref{def:yUH}, there exists a constant $C>0$ such that 
\begin{align*}
\vert (U^3-2PU)(t,\bar\xi)& -(U^3-2PU)(t,\xi)\vert \\
&  \leq  C\vert U(t,\xi)\vert  \vert y(t,\bar\xi)-y(t,\xi)\vert \\
& + (2U^2(t,\xi)+2U^2(t,\bar\xi)+P(t,\bar\xi))\vert U(t,\bar\xi)-U(t,\xi)\vert\\
& \leq C(\vert U(t,\xi)\vert \vert y(t,\bar\xi)-y(t,\xi)\vert+ \vert U(t,\bar\xi)-U(t,\xi)\vert)
\end{align*}
and hence, by \eqref{sys:intyUH},
\begin{align*}
& \norm{ (Z(s,\bar\xi)-Z(s,\xi))-(Z(t,\bar\xi)-Z(t,\xi))}\\
 & \qquad \qquad \qquad  \leq \int_t^s \vert U(l,\bar\xi)-U(l,\xi)\vert + \vert Q(l,\bar\xi)-Q(l,\xi)\vert \\
& \qquad \qquad \qquad \qquad \qquad \quad  + \vert (U^3-2PU)(l,\bar\xi)-(U^3-2PU)(l, \xi)\vert dl\\
& \qquad \qquad \qquad  \leq C\int_t^s \norm {Z(l, \bar\xi)-Z(l, \xi)} dl \\
& \qquad \qquad \qquad \quad+ C\int_t^s \vert U(l, \xi)\vert + \max_{\hat \xi\in [\xi, \bar\xi]} U^2(l,\hat\xi) +P(l, \xi)+P(l, \bar\xi)dl \vert \bar\xi-\xi\vert\\
& \qquad \qquad \qquad \leq C\norm{Z(t,\bar\xi)-Z(t,\xi)}(s-t)\\
& \qquad \qquad \qquad \quad  + C\int_t^s \norm{(Z(l,\bar\xi)-Z(l,\xi))-(Z(t,\bar\xi)-Z(t,\xi))}dl\\
& \qquad \qquad \qquad \quad+ C\int_t^s \vert U(l, \xi)\vert + \max_{\hat \xi\in [\xi, \bar\xi]} U^2(l,\hat \xi) +\hat P(l, \xi)+\hat P(l,\bar \xi)dl \vert \bar\xi-\xi\vert,
\end{align*}
since 
\begin{equation*}
\vert y(t,\bar\xi)-y(t,\xi)\vert \leq \vert (y(t,\bar\xi)-\bar\xi)-(y(t,\xi)-\xi)\vert + \vert \bar\xi-\xi\vert.
\end{equation*}
Applying to the above inequality a Gronwall type argument yields \eqref{help:est}.
 \end{proof}
 
 In fact we have shown that each component of $Z(s,\cdot)- Z(t, \cdot)$ is Lipschitz continuous, and hence we obtain as an immediate consequence the following corollary.
 
 \begin{corollary}\label{l2:k}
 Given $Z=(y-\id, U, H)$ and $0\leq t<s\leq T$, then for almost every $\xi\in \Real$, 
  \begin{align*}
\norm{Z_\xi(s,\xi)-Z_\xi(t,\xi)}&\leq (e^{C(s-t)}-1)\norm{Z_\xi(t,\xi)}\\
& \quad + Ce^{C(s-t)}\int_t^s \vert U(l, \xi)\vert +U^2(l, \xi)+2\hat P(l, \xi) dl,
\end{align*}
where $\hat P$ is given by \eqref{def:hatP}.
 \end{corollary}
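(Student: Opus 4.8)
The plan is to obtain the pointwise bound by differentiating the increment estimate \eqref{help:est} in the variable $\xi$. The crucial preliminary observation is that for every fixed $t\in[0,T]$ each component of $Z(t,\cdot)=(y-\id,U,H)(t,\cdot)$ is Lipschitz continuous in $\xi$, with Lipschitz constant depending only on $T$ and $H_\infty(0)$: this has been established above for $y$ and for $U$, while for $H$ it follows from the identity $H(t,\xi)=k(t,\xi)-y(t,\xi)$ together with Lemma~\ref{lem:k}. Consequently $Z_\xi(t,\cdot)$ exists almost everywhere and belongs to $L^\infty(\Real)$, and the same holds at time $s$. Moreover, the map $\xi\mapsto Z(s,\xi)-Z(t,\xi)$ is Lipschitz, hence differentiable at a.e.\ $\xi$, and its derivative there equals $Z_\xi(s,\xi)-Z_\xi(t,\xi)$.

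First I would fix $\xi$ in the full-measure set on which, simultaneously, $Z(s,\cdot)-Z(t,\cdot)$ is differentiable, the difference quotients $(Z(t,\bar\xi)-Z(t,\xi))/(\bar\xi-\xi)$ converge to $Z_\xi(t,\xi)$ as $\bar\xi\downarrow\xi$, and $\xi$ is a Lebesgue point of $Z_\xi(t,\cdot)$. For $\bar\xi>\xi$ I would then divide \eqref{help:est} by $\bar\xi-\xi>0$ and let $\bar\xi\downarrow\xi$. The left-hand side tends to $\norm{Z_\xi(s,\xi)-Z_\xi(t,\xi)}$; since $\norm{\cdot}$ is the continuous $\ell^1$-norm on $\Real^3$, the first term on the right tends to $(e^{C(s-t)}-1)\norm{Z_\xi(t,\xi)}$.

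It then remains to pass to the limit in the integral term, where the factor $\vert\bar\xi-\xi\vert$ cancels against the denominator. For each fixed $l\in[t,s]$, continuity of $U(l,\cdot)$ yields $\max_{\hat\xi\in[\xi,\bar\xi]}U^2(l,\hat\xi)\to U^2(l,\xi)$, and continuity of $\hat P(l,\cdot)$ — which follows from the continuity of $y(l,\cdot)$ and dominated convergence in the defining integral \eqref{def:hatP} — yields $\hat P(l,\bar\xi)\to\hat P(l,\xi)$; thus the integrand converges pointwise in $l$ to $\vert U(l,\xi)\vert+U^2(l,\xi)+2\hat P(l,\xi)$. By \eqref{est:uinf}, \eqref{est:pxpinf}, and the uniform boundedness of $\hat P$ on $[0,T]\times\Real$ (which holds since $\int_\Real(U^2y_\xi+H_\xi)(l,\cdot)$ is controlled by $\nu(0,\Real)$), this integrand is dominated by a constant independent of $\bar\xi$ and of $l\in[t,s]$; as $[t,s]$ is a bounded interval, the dominated convergence theorem gives that the integral converges to $\int_t^s(\vert U(l,\xi)\vert+U^2(l,\xi)+2\hat P(l,\xi))\,dl$. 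Combining the three limits yields the claimed inequality for a.e.\ $\xi$.

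The only slightly delicate point is the first limiting step — converting the finite-difference estimate \eqref{help:est} into a statement about $Z_\xi$ — which hinges on choosing the exceptional null set appropriately and on the a.e.\ differentiability of Lipschitz functions with matching one-sided derivatives; the remaining limits are routine, so I do not expect a serious obstacle.
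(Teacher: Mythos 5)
Your proposal is correct and follows essentially the same route as the paper, which simply remarks that each component of $Z(s,\cdot)-Z(t,\cdot)$ is Lipschitz continuous and obtains the corollary as an immediate consequence of \eqref{help:est} by dividing by $\bar\xi-\xi$ and letting $\bar\xi\downarrow\xi$. Your additional care with the choice of the full-measure set and the dominated-convergence passage in the integral term (turning $\hat P(l,\xi)+\hat P(l,\bar\xi)$ into $2\hat P(l,\xi)$ and $\max_{\hat\xi\in[\xi,\bar\xi]}U^2(l,\hat\xi)$ into $U^2(l,\xi)$) is exactly the detail the paper leaves implicit.
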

 
Since the above estimate holds pointwise almost everywhere, we obtain the following norm estimates as a  consequence.

\begin{lemma}\label{lem:finest}
For $p=2$ and $p=\infty$, $0\leq t<s\leq T$, and $f$ equal to $y_\xi-1$, $U_\xi$, or $H_\xi$, there exists a constant $C>0$ such that 
\begin{equation*}
\norm{f(s,\cdot)-f(t, \cdot)}_p \leq Ce^{C(s-t)}(\norm{y_\xi(t,\cdot)-1}_p+\norm{U_\xi(t,\cdot)}_p+ \norm{H_\xi(t,\cdot)}_p+1)(s-t)
\end{equation*}
and 
\begin{equation*}
\norm{f(t,\cdot)}_p \leq C(\norm{y_\xi(0,\cdot)-1}_p+\norm{U_\xi(0,\cdot)}_p+ \norm{H_\xi(0,\cdot)}_p+t).
\end{equation*}
\end{lemma}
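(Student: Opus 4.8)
The plan is to derive both estimates by taking $L^p$-norms, $p\in\{2,\infty\}$, of the pointwise bound in Corollary~\ref{l2:k}. Since the norm $\norm{\cdot}$ on $\Real^3$ is the $\ell^1$-norm, for $f$ equal to any of the three components $y_\xi-1$, $U_\xi$, $H_\xi$ of $Z_\xi$ one has $|f(s,\xi)-f(t,\xi)|\le\norm{Z_\xi(s,\xi)-Z_\xi(t,\xi)}$, so it suffices to bound $\norm{\,\norm{Z_\xi(s,\cdot)-Z_\xi(t,\cdot)}\,}_p$. Applying $\norm{\cdot}_p$ to the estimate of Corollary~\ref{l2:k}, the triangle inequality in $L^p$ gives $\norm{\,\norm{Z_\xi(t,\cdot)}\,}_p\le\norm{y_\xi(t,\cdot)-1}_p+\norm{U_\xi(t,\cdot)}_p+\norm{H_\xi(t,\cdot)}_p$ for the first term, and Minkowski's integral inequality bounds the second term by $Ce^{C(s-t)}\int_t^s\norm{\,|U(l,\cdot)|+U^2(l,\cdot)+2\hat P(l,\cdot)\,}_p\,dl$.

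The crux is therefore a uniform bound $\norm{\,|U(l,\cdot)|+U^2(l,\cdot)+2\hat P(l,\cdot)\,}_p\le C$ for $l\in[0,T]$. For $p=\infty$ this is immediate: $\norm{U(l,\cdot)}_\infty\le C$ by \eqref{est:uinf}, so $\norm{U^2(l,\cdot)}_\infty\le C$, while $\hat P(l,\xi)\le\frac14\int_\Real(U^2y_\xi+H_\xi)(l,\eta)\,d\eta\le C$, because $\int_\Real(U^2y_\xi)(l,\eta)\,d\eta\le\norm{u(l,\cdot)}^2_{L^2(\Real)}\le H_\infty(0)$ by a change of variables and \eqref{est:uinf}, and, using the relation $y(l,\xi)+H(l,\xi)=k(l,\xi)$ from \eqref{ident:relcl} together with $k(l,\xi)\to\pm\infty$ as $\xi\to\pm\infty$ (immediate from \eqref{int:ode}) and $\tilde H(l,\cdot)$ having limits $0$ and $H_\infty(0)$ at $\mp\infty$, $\int_\Real H_\xi(l,\eta)\,d\eta=H_\infty(0)$. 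For $p=2$ one writes $\norm{U^2(l,\cdot)}_2\le\norm{U(l,\cdot)}_\infty\norm{U(l,\cdot)}_2$ and controls $\norm{U(l,\cdot)}_2$ through \eqref{sys:intyUH2}, namely $\norm{U(l,\cdot)}_2\le\norm{U(0,\cdot)}_2+\int_0^l\norm{Q(r,\cdot)}_2\,dr$; here $\norm{U(0,\cdot)}_2\le C$ because $(y,U,V,H)(0,\cdot)=L((u_0,\nu_0))\in\F_0$, so the $\F_0$-relation $y_\xi+H_\xi=1$ together with the constraints \eqref{cond:Lagcoord7} and $H_\xi\ge V_\xi\ge0$ yields $U(0,\cdot)^2\le(1+\norm{u_0}^2_\infty)H_\xi(0,\cdot)$, while $\norm{Q(r,\cdot)}_2\le C$ follows by splitting $\int_\Real p_x(r,y(r,\xi))^2\,d\xi$ according to $d\xi=d\big(y(r,\cdot)\big)+d\big(H(r,\cdot)\big)$ into a piece bounded by $\norm{p_x(r,\cdot)}^2_{L^2(\Real)}$ (controlled by Young's inequality and $\norm{(2u^2+u_x^2)(r,\cdot)}_{L^1(\Real)}\le2H_\infty(0)$) and a piece bounded by $\norm{p_x(r,\cdot)}^2_\infty H_\infty(0)$ via \eqref{est:pxpinf}. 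Finally, interpolating $\norm{\hat P(l,\cdot)}^2_2\le\norm{\hat P(l,\cdot)}_\infty\norm{\hat P(l,\cdot)}_1$, the $L^1$-norm is bounded by Fubini, $\int_\Real\hat P(l,\xi)\,d\xi=\frac14\int_\Real(U^2y_\xi+H_\xi)(l,\eta)\big(\int_\Real e^{-|y(l,\xi)-y(l,\eta)|}\,d\xi\big)\,d\eta$, using $\int_\Real e^{-|y(l,\xi)-a|}\,d\xi\le 2+H_\infty(0)$.

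Putting these estimates together and using $e^{C(s-t)}-1\le C(s-t)e^{C(s-t)}$ yields the first displayed inequality. The second then follows from the first (applied on $[0,t]$) and the triangle inequality $\norm{f(t,\cdot)}_p\le\norm{f(0,\cdot)}_p+\norm{f(t,\cdot)-f(0,\cdot)}_p$, bounding $\norm{f(0,\cdot)}_p\le\norm{y_\xi(0,\cdot)-1}_p+\norm{U_\xi(0,\cdot)}_p+\norm{H_\xi(0,\cdot)}_p$ and replacing $e^{Ct}$ by the constant $e^{CT}$ before collecting terms. The main obstacle is the $p=2$ instance of the uniform bound above---in particular the $L^2$-control of $U(l,\cdot)$ and of $\hat P(l,\cdot)$---which forces one back to the integral equations \eqref{sys:intyUH} and to the change of variables $\xi=y(l,\xi)+H(l,\xi)$, all while tracking that every constant depends only on $T$ and $H_\infty(0)$; the remaining steps are routine.
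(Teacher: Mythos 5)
Your proposal is correct and follows the paper's overall strategy exactly: take $L^p$ norms of the pointwise estimate in Corollary~\ref{l2:k}, use Minkowski's integral inequality, and reduce everything to uniform $L^2\cap L^\infty$ bounds on $U$, $U^2$ and $\hat P$ over $[0,T]$. Where you diverge is in how you obtain the two $p=2$ bounds, and there the paper is more economical. For $\norm{U(l,\cdot)}_2$ the paper does not go through the integral equation \eqref{sys:intyUH2} and an $L^2$ bound on $Q$ at all; it simply writes $1\leq e^{Cl}k_\xi=e^{Cl}(y_\xi+H_\xi)$ using \eqref{ident:relcl} and \eqref{Gron:k}, so that $\norm{U(l,\cdot)}_2^2\leq e^{Cl}\big(\norm{U^2y_\xi(l,\cdot)}_1+\norm{U^2H_\xi(l,\cdot)}_1\big)\leq e^{Cl}(1+\norm{u(l,\cdot)}_\infty^2)H_\infty$ --- essentially your $t=0$ argument run at every time $l$, which makes the detour through $\norm{Q(r,\cdot)}_2$ unnecessary. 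For $\norm{\hat P(l,\cdot)}_2$ the paper bounds $\hat P(l,\xi)\leq C\int_\Real e^{-\vert\xi-\eta\vert}H_\xi(l,\eta)\,d\eta$ via $\vert y(l,\xi)-y(l,\eta)\vert\geq\vert\xi-\eta\vert-2\norm{y(l,\cdot)-\id}_\infty$ and then applies Young's inequality once for both $p=2$ and $p=\infty$, whereas you interpolate through $\norm{\hat P}_1$ by Fubini; both work. One point to tidy up: your identity ``$d\xi=d(y(r,\cdot))+d(H(r,\cdot))$'' (and likewise the change of variables ``$\xi=y(l,\xi)+H(l,\xi)$'' in your closing paragraph, and the bound $\int_\Real e^{-\vert y(l,\xi)-a\vert}d\xi\leq 2+H_\infty(0)$) is literally true only at $l=0$; for $l>0$ one has $y+H=k$ by \eqref{ident:relcl} and must insert the factor $e^{Cl}\leq e^{CT}$ from \eqref{Gron:k}, i.e.\ $d\xi\leq e^{CT}\big(dy+dH\big)$. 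Since all constants are allowed to depend on $T$ and $H_\infty(0)$, this is a harmless slip rather than a gap, but it should be stated correctly.
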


\begin{proof}
Without loss of generality we assume that $f=y_\xi-1$ and $t=0$.

Observe that by \eqref{sys:intyUH3} 
\begin{equation}\label{H:preser}
\lim_{\xi\to \infty} H(t,\xi)=\lim_{\xi\to \infty} H(0, \xi)=\nu(0,\Real),
\end{equation}
and hence, by \eqref{ident:relcl} and \eqref{int:ode},
\begin{align*}
0\leq \hat P(t,\xi)& \leq \frac{1}{2} \int_\Real e^{-\vert y(t,\xi)-y(t,\eta)\vert} H_\xi(t,\eta)d\eta\\
& \leq  e^{2\norm{y(t,\cdot)-\id}_\infty}\int_\Real e^{-\vert \xi-\eta\vert} H_\xi(t,\eta) d\eta\\
& \leq e^{2(\norm{k(t,\cdot)-\id} + H_\infty)} \int_\Real e^{-\vert \xi-\eta\vert }H_\xi(t, \eta) d\eta\\
& \leq C\int_\Real e^{-\vert \xi-\eta\vert }H_\xi(t, \eta) d\eta,
\end{align*}
which implies that there exists a constant $C>0$ such that 
\begin{equation}\label{est:Pp}
\norm{\hat P(t, \cdot)}_p\leq C, \quad \text{ for all } t\in [0,T] \text{ and } p=2, \infty,
\end{equation}
due to Young's inequality.
For $U$, \eqref{est:uinf}, \eqref{ident:relcl}, \eqref{Gron:k} and \eqref{H:preser} imply
\begin{equation*}
\norm{U(t,\cdot)}_2^2\leq e^{Ct} (\norm{U^2y_\xi(t,\cdot)}_1+ \norm{U^2H_\xi(t,\cdot)}_1) \leq e^{Ct} (1+ \norm{u(t,\cdot)}_\infty^2) H_\infty\leq C,
\end{equation*}
so that 
\begin{equation}\label{est:Up}
\norm{U(t,\cdot)}_p \leq C, \quad \text{ for all }t\in [0,T] \text{ and } p=2, \infty.
\end{equation}

Keeping these estimates in mind, Corollary~\ref{l2:k} implies 
\begin{align*}
\norm{y_\xi(s, \cdot)-1}_p&\leq 3e^{Cs}(\norm{y_\xi(0,\cdot)-1}_p+\norm{U_\xi(0,\cdot)}_p+ \norm{H_\xi(0,\cdot)}_p)\\
& \qquad +Ce^{Cs}\int_0^s \norm{U(l, \cdot)}_p+ \norm{U^2(l, \cdot)}_p + 2\norm{\hat P(l, \cdot)}_p  dl\\
&\leq Ce^{Cs}(\norm{y_\xi(0,\cdot)-1}_p+\norm{U_\xi(0,\cdot)}_p+ \norm{H_\xi(0,\cdot)}_p+s),
\end{align*}
and $y_\xi(s, \cdot)-1\in L^2(\Real)$.
The other inequality follows now immediately from Corollary~\ref{l2:k}.
 \end{proof}
 
 Turning our attention back to $k(t,\xi)$, we have that for $t=0$, $k(0, \xi)=\xi$ and $k_\xi(0,\cdot)-1$, $y_\xi(0,\cdot)-1$, $U_\xi(0,\cdot)$ and $H_\xi(0,\cdot)$ belong to $L^2(\Real)\cap L^\infty(\Real)$. For $t\in [0,T]$, it now follows, from Lemma~\ref{lem:finest} that $y_{\xi}(t,\cdot)-1$, $U_\xi(t,\cdot)$, and $H_\xi(t,\cdot)$ belong to $L^2(\Real)\cap L^\infty(\Real)$ and especially
\begin{equation*}
k_\xi(t,\cdot)-1= y_\xi(t,\cdot)+H_\xi(t,\cdot)-1\in L^2(\Real)\cap L^\infty(\Real) \quad \text{ for all } t\in[0,T].
\end{equation*}
Thus for every $t\in [0,T]$, $k(t,\cdot)$ is a relabeling function and hence $(y,U,V,H)(t, \cdot)$ belongs to $\F$.\

Finally, we can turn our attention towards the integral equation for $y_\xi(t,\xi)$. Since $y(t,\xi)$ is a solution to \eqref{sys:intyUH1}, we can write for any $t<s$ and any $\xi<\bar \xi$
\begin{align*}
y(s,\bar\xi)-y(s, \xi)& = y(t, \bar\xi)-y(t, \xi)+ (U(t, \bar\xi)-U(t, \xi))(s-t) \\
& \qquad \quad + \int_t^s (U(l, \bar\xi)-U(l,\xi))-(U(t, \bar\xi)-U(t,\xi)) dl,
\end{align*}
which yields the following inequality, when applying \eqref{help:est}, dividing the whole inequality by $\bar\xi-\xi$ and taking the limit as $\bar\xi\to\xi$, 
\begin{align*}
\vert y_\xi(s, \xi)& - y_\xi(t, \xi)- U_\xi(t, \xi)(s-t)\vert  \\
&\qquad  \leq  e^{C(s-t)} (s-t)^2 \|Z_\xi(t, \xi)\| \\
& \qquad \quad + Ce^{C(s-t)}(s-t) \int_t^s \vert U(l, \xi)\vert + U^2(l,\xi)+ 2\hat P(l, \xi)dl,
\end{align*}
which is valid for almost every $\xi\in \Real$.
We therefore have, using Minkowski's inequality for integrals, for $p=2$ and $p=\infty$, Lemma~\ref{lem:finest}, \eqref{est:Pp}, and \eqref{est:Up},
\begin{align}\label{finest:y}
\norm{y_\xi(s, \cdot)-y_\xi(t, \cdot)- U_\xi(t, \cdot)(s-t)}_{p}  \leq C(s-t)^2.
\end{align}
Using once more Minkowski's inequality for integrals, we can also conclude that for any $t<s$,
\begin{equation*}
y_\xi(s,\cdot)-y_\xi(t,\cdot)- \int_t^sU_\xi(l, \cdot) dl \in L^2(\Real)\cap L^\infty(\Real).
\end{equation*}
Thus for any $\Delta t$ such that $N\Delta t=s-t$, we have, by Lemma~\ref{lem:finest} and \eqref{finest:y},
\begin{align*}
& \norm{y_\xi(s,\cdot) -y_\xi(t,\cdot)- \int_t^s U_\xi(l, \cdot) dl}_p\\ &\qquad\qquad   \leq \sum_{n=1}^N \norm{y_\xi(t+n\Delta t,\cdot)-y_\xi(t+(n-1)\Delta t,\cdot)- \int_{(n-1)\Delta t}^{n\Delta t} U_\xi(t+l, \cdot) dl}_p\\
&\qquad \qquad  \leq \sum_{n=1}^N \norm{y_\xi(t+n\Delta t,\cdot)-y_\xi(t+(n-1)\Delta t,\cdot)- \Delta t U_\xi(t+ (n-1)\Delta t, \cdot)}_p\\
& \qquad \qquad \quad + \sum_{n=1}^N \norm{\int_{(n-1)\Delta t}^{n\Delta t} U_\xi(t+l, \cdot)-U_\xi(s+(n-1)\Delta t, \cdot) dl}_{p}\\
& \qquad \qquad \leq CT\Delta t+ \sum_{n=1}^N \int_{(n-1)\Delta t}^{n\Delta t} \norm{U_\xi (t+l, \cdot)-U_\xi (t+(n-1)\Delta t, \cdot)}_p dl\\
& \qquad \qquad \leq C\Delta t. 
\end{align*}
Since we can choose any $\Delta t>0$, it follows that 
\begin{equation*}
\norm{y_\xi(s,\cdot) -y_\xi(t,\cdot)- \int_t^s U_\xi(l, \cdot) dl}_p=0,
\end{equation*}
which means that $y_\xi(t,\cdot)-1$ satisfies the following integral equation in $L^2(\Real)\cap L^\infty(\Real)$
\begin{equation}\label{inteq:yxi}
y_\xi(s,\xi)=y_\xi(t,\xi)+ \int_t^s U_\xi(l, \xi) dl.
\end{equation}
 
 \subsection{The integral equation for $U_\xi$} Since $U(t,\xi)$ is a solution to \eqref{sys:intyUH2}, we can write for any $t<s$ and any $\xi<\bar\xi$, 
\begin{align}\label{diffn:U}
U(s,\bar\xi)-U(s, \xi)& = U(t, \bar\xi)-U(t, \xi)- \int_t^s Q(l, \bar\xi)-Q(l, \xi)dl.
\end{align}
Since we want to follow the same lines as for deriving \eqref{inteq:yxi}, we need to have a closer look at the integrand. Using integration by parts, $Q(t, \xi)$ can be written as 
\begin{align}\nonumber
Q(t,\xi)&=-\frac12 V(t,\xi)+ \frac14 \int_\Real e^{-\vert y(t,\xi)-y(t,\eta)\vert} (2UU_\xi+Vy_\xi)(t,\eta)d\eta\\ \label{def:hQ}
& = -\frac12 V(t,\xi)+\check P(t,\xi).
\end{align}

We have shown earlier that the functions $y$, $U$, and $H$ are Lipschitz continuous on $[0,T]\times \Real$, which implies that $y_\xi$, $U_\xi$, and $H_\xi$ are measurable and belong to $L^\infty([0,T]\times \Real)$. Thus, the set 
\begin{equation*}
S=\{(t,\xi)\in [0,T]\times \Real\mid y_\xi(t,\xi)=0\}
\end{equation*}
is Lebesgue measurable, which implies that $\mathbbm{1}_{S}(t,\xi)$, the indicator function of $S$, is measurable. Thus also the function
\begin{equation*}
V_\xi(t,\xi)=\mathbbm{1}_{S}H_\xi(t,\xi)
\end{equation*}
is measurable on $[0,T]\times \Real$. Moreover, according to Tonelli's theorem, see e.g. \cite[Proposition 5.2.1]{C}, the function 
\begin{equation*}
\xi \mapsto \int_{t}^s V_\xi(l, \xi)dl 
\end{equation*}
is measurable and for all $\xi \in \Real$
\begin{equation*}
 \int_{-\infty}^\xi \int_t^s V_\xi(l, \eta) dl d\eta= \int_t^s \int_{-\infty}^\xi V_\xi(l, \eta)d\eta dl= \int_t^s V(l, \xi) dl. 
\end{equation*} 

Consider now the function 
\begin{equation*}
g(\xi)= \int_{-\infty}^\xi \int_t^s V_\xi(l, \eta) dl d\eta= \int_t^s V(l, \xi)dl,
\end{equation*}
 which is increasing and Lipschitz continuous, and hence differentiable almost everywhere with derivative
 \begin{equation*}
 g_\xi(\xi)= \int_t^s V_\xi(l, \xi)dl \in L^2(\Real) \cap L^\infty(\Real),
 \end{equation*}
so that we can write 
\begin{align}\nonumber
\int_t^s Q(l, \bar\xi)-Q(l, \xi)dl &= -\frac12 \int_t^s V(l,\bar\xi)-V(l,\xi)dl+ \int_t^s \check P(l,\bar\xi)-\check P(l,\xi)dl\\ \label{diffn:U2}
& = -\frac12 (g(\bar\xi)-g(\xi))+ \int_t^s \check P(l,\bar\xi)-\check P(l,\xi)dl.
\end{align}

For the difference involving $\check P$, we use the following splitting
\begin{align}\nonumber
\check P(l, \bar\xi)-\check P(l, \xi)&= \frac14  (e^{y(t,\xi)-y(t,\bar\xi)}-1)\int_{-\infty}^{\xi}e^{y(l,\eta)-y(l,\xi)} (2UU_\xi+Vy_\xi)(l,\eta) d\eta\\ \nonumber
& \quad + \frac14(e^{y(t, \bar\xi)-y(t,\xi)}-1) \int_{\bar\xi}^\infty  e^{y(l, \xi)-y(l, \eta)} (2UU_\xi+Vy_\xi)(l, \eta) d\eta\\ \nonumber
& \quad + \frac14 \int_{\xi}^{\bar\xi} (e^{2y(l, \eta)-y(l, \xi)-y(l, \bar\xi)}-1)e^{y(l, \xi)-y(l, \eta)}(2UU_\xi+Vy_\xi)(l, \eta) d\eta\\ \nonumber
& \quad + \frac14 \int_{-\infty}^{\xi} (e^{y(l, \xi)-y(l, \bar\xi)}-e^{y(t, \xi)-y(t, \bar\xi)})e^{y(l, \eta) -y(l, \xi)} (2UU_\xi+Vy_\xi)(l, \eta) d\eta\\ \label{split:tiQ}
& \quad + \frac14 \int_{\bar\xi}^\infty (e^{y(l, \bar\xi)-y(l, \xi)}-e^{y(t, \bar\xi)-y(t, \xi)})e^{y(l, \xi)-y(l, \eta) } (2UU_\xi+Vy_\xi)(l, \eta) d\eta.
\end{align}
Furthermore, for any $t\in [0,T]$, Definition~\ref{def:Lagcoord} \eqref{cond:Lagcoord7} implies 
\begin{equation}\label{ne:est}
\vert UU_\xi(t,\xi)\vert \leq V_\xi(t,\xi)\leq H_\xi(t,\xi) \quad \text{ for almost every } \xi\in \Real
\end{equation}
and hence there exists $C>0$ such that 
\begin{align*}
4\vert \check P(t,\xi)\vert &\leq \int_\Real e^{-\vert y(t,\xi)-y(t,\eta)\vert} (2H_\xi+2Hy_\xi)(t,\eta) d\eta\leq C,
\end{align*}
so that, by \eqref{diffn:U}, \eqref{diffn:U2}, \eqref{split:tiQ}, \eqref{ident:relcl}, \eqref{Gron:k}, and \eqref{help:est},
\begin{align*}
\Big\vert (U&(s,\bar\xi)-U(s,\xi)) -(U(t,\bar\xi)-U(t, \xi))-\frac12(g(\bar\xi)-g(\xi))\\
& +\frac14  (e^{y(t,\xi)-y(t,\bar\xi)}-1)\int_t^s\int_{-\infty}^{\xi}e^{y(l,\eta)-y(l,\xi)} (2UU_\xi+Vy_\xi)(l,\eta) d\eta dl\\
& +\frac14(e^{y(t, \bar\xi)-y(t,\xi)}-1) \int_t^s \int_{\bar\xi}^\infty  e^{y(l, \xi)-y(l, \eta)} (2UU_\xi+Vy_\xi)(l, \eta) d\eta dl\Big\vert\\
 & \leq \frac14\int_t^s \int_{\xi}^{\bar\xi} \vert e^{2y(l, \eta)-y(l, \xi)-y(l, \bar\xi)}-1\vert e^{y(l, \xi)-y(l, \eta)}(2V_\xi+Vy_\xi)(l, \eta) d\eta dl\\
& \quad + \frac14 \int_t^s\int_{-\infty}^{\xi} \vert e^{y(l, \xi)-y(l, \bar\xi)}-e^{y(t, \xi)-y(t, \bar\xi)}\vert e^{y(l, \eta) -y(l, \xi)} (2H_\xi+Hy_\xi)(l, \eta) d\eta dl\\
& \quad + \frac14 \int_t^s\int_{\bar\xi}^\infty \vert e^{y(l, \bar\xi)-y(l, \xi)}-e^{y(t, \bar\xi)-y(t, \xi)}\vert e^{y(l, \xi)-y(l, \eta) } (2H_\xi+Hy_\xi)(l, \eta) d\eta dl\\
& \leq \frac14\int_t^s  \int_{\xi}^{\bar\xi} (y(l, \bar\xi)-y(l, \xi))e^{y(l, \bar\xi)-y(l, \eta)}(2V_\xi+Vy_\xi)(l, \eta) d\eta dl\\
&\quad + C \int_t^s  \vert (y(l, \bar\xi)-y(l, \xi))-(y(t, \bar\xi)-y(t, \xi))\vert e^{\max_{\bar l\in [t,s]}(k(\bar l, \bar\xi)-k(\bar l, \xi))}dl\\
& \leq C \int_t^s  (k(l, \bar\xi)-k(l, \xi) )e^{k(l, \bar\xi)-k(l, \xi)}dl\vert \bar\xi-\xi\vert\\
&\quad  + C(s-t) (e^{C(s-t)}-1) \| Z(t, \bar\xi)-Z(t, \xi)\|e^{C(\bar\xi-\xi)}\\
&\quad +C(s-t)e^{C(s-t)}\int_t^s \vert U(l, \xi)\vert + \max_{ \hat \xi\in [\xi,\bar\xi]} U^2(l,\hat \xi)+ \hat P(l, \xi)+\hat P(l, \bar\xi)dl e^{C(\bar\xi-\xi)}\vert \bar\xi-\xi\vert.
\end{align*}
Dividing now both sides by $\bar\xi-\xi$ and taking the limit as $\bar\xi\to\xi$, we end up with 
\begin{align*}
\vert U_\xi(s, \xi)&-U_\xi(t, \xi)-\frac12 \int_t^s V_\xi(l, \xi) dl\\
& \quad - \frac14\int_t^s \int_\Real \sign(\xi-\eta) e^{-\vert y(l, \xi)-y(l, \eta)\vert} (2UU_\xi+Vy_\xi)(l, \eta) d\eta dl y_\xi(t, \xi)\vert \\
& \leq C(s-t)^2 e^{C(s-t)} \|Z_\xi(t, \xi)\|\\
& \quad + C(s-t) e^{C(s-t)} \int_t^s \vert U(l, \xi)|+ U^2(l, \xi)+ 2\hat P(l, \xi)dl.
\end{align*}
Furthermore, integration by parts yields 
\begin{equation*}
\int_\Real \sign(\xi-\eta) e^{-\vert y(l, \xi)-y(l, \eta)\vert} (2UU_\xi+Vy_\xi)(l, \eta) d\eta= 2(U^2-2P)(l, \xi), 
\end{equation*}
and we can write
\begin{align*}
\vert U_\xi(s, \xi)&-U_\xi(t, \xi)-\frac12 \int_t^s V_\xi(l, \xi) dl- \frac12\int_t^s (U^2-2P)(l, \xi) dl y_\xi(t, \xi)\vert \\
& \leq C(s-t)^2 e^{C(s-t)} \|Z_\xi(t, \xi)\|\\
& \quad + C(s-t)e^{C(s-t)} \int_t^s \vert U(l, \xi)|+ U^2(l, \xi)+ 2\hat P(l, \xi)dl .
\end{align*}
Taking the $L^p$ norm for $p=2$ or $p=\infty$ on both sides, applying the Minkowski inequality for integrals and recalling Lemma~\ref{lem:finest}, \eqref{est:Pp}, and \eqref{est:Up}, we finally have 
\begin{align*}
 \norm{U_\xi(s, \cdot)-U_\xi(t, \cdot)-\frac12 \int_t^s V_\xi(l, \cdot) dl- \frac12\int_t^s(U^2-2P)(l, \cdot) dl y_\xi(t, \cdot)}_p \leq C(s-t)^2.
\end{align*}
Using once more Minkowski's inequality for integrals, we can also conclude that for any $t<s$,
\begin{equation*}
U_\xi(s,\cdot)-U_\xi(t, \cdot)-\frac12\int_t^s V_\xi(l, \cdot)dl- \frac12 \int_t^s  (U^2-2P)y_\xi(l, \cdot) dl \in L^2(\Real)\cap L^\infty(\Real).
\end{equation*}

Thus for any $\Delta t$ such that $N\Delta t= s-t$, we have using Lemma~\ref{lem:finest} once more
\begin{align*}
& \norm{U_\xi(s,\cdot)-U_\xi(t, \cdot)-\frac12\int_t^s V_\xi(l, \cdot)dl- \frac12 \int_t^s  (U^2-2P)y_\xi(l, \cdot) dl}_p\\
& \leq \sum_{n=1}^N \|U_\xi(t+n\Delta t,\cdot)-U_\xi(t+(n-1)\Delta t, \cdot)\\
& \qquad -\frac12\int_{(n-1)\Delta t}^{n\Delta t} V_\xi(t+l, \cdot)dl- \frac12 \int_{(n-1)\Delta t}^{n\Delta t}  (U^2-2P)y_\xi(t+l, \cdot) dl\|_p\\
& \leq \sum_{n=1}^N\|U_\xi(t+n\Delta t,\cdot)-U_\xi(t+(n-1)\Delta t, \cdot)\\
& \qquad -\frac12\int_{(n-1)\Delta t}^{n\Delta t} V_\xi(t+l, \cdot)dl - \frac12 \int_{(n-1)\Delta t}^{n\Delta t}  (U^2-2P)(t+l, \cdot) dly_\xi(t+ (n-1)\Delta t, \cdot)\|_p\\
& \quad + \sum_{n=1}^N \norm{\frac12 \int_{(n-1)\Delta t}^{n\Delta t}  (U^2-2P)(t+l, \cdot)(y_\xi(t+l, \cdot)-y_\xi(t+(n-1)\Delta t, \cdot)) dl}_p\\
& \leq CT\Delta t + C\sum_{n=1}^N \int_{(n-1)\Delta t}^{n\Delta t} \norm{y_\xi(t+l, \cdot)-y_\xi(t+(n-1)\Delta t, \cdot)}_p dl\\
& \leq C\Delta t.
\end{align*}
Since we can choose any $\Delta t>0$, it follows that 
\begin{equation*}
 \norm{U_\xi(s,\cdot)-U_\xi(t, \cdot)-\frac12 \int_t^s V_\xi(l, \cdot)dl- \frac12 \int_t^s  (U^2-2P)y_\xi(l, \cdot) dl}_p=0,
 \end{equation*}
 which means that $U_\xi(t, \cdot)$ satisfies the following integral equation in $L^2(\Real)\cap L^\infty(\Real)$
 \begin{equation*}
 U_\xi(s,\xi)=U_\xi(t, \xi)+\frac12 \int_t^s V_\xi(l, \xi)dl+ \frac12 \int_t^s  (U^2-2P)y_\xi(l, \xi) dl.
\end{equation*}

\subsection{The integral equation for $H_\xi$} Since $H(t,\xi)$ is a solution to \eqref{sys:intyUH3}, we can write for any $s<t$ and any $\xi<\bar\xi$,
\begin{align*}
H(s, \bar\xi)-H(s,\xi)&= H(t,\bar\xi)-H(t, \xi)+ \int_t^s (U^3-2PU)(l, \bar\xi)-(U^3-2PU)(l, \xi)dl\\
& = H(t, \bar\xi)-H(t, \xi)\\
& \quad + (U(t, \bar\xi)-U(t, \xi))\\
& \qquad \qquad \qquad \times \int_t^s (U^2(l, \bar\xi)+ U(l, \bar\xi)U(l, \xi)+U^2(l, \xi)- 2 P(l, \bar\xi) dl\\
& \quad -2\int_t^s (P(l, \bar\xi)-P(l, \xi)) U(l, \xi) dl\\
& \quad +\int_t^s (U^2(l, \bar\xi)+U(l, \bar\xi)U(l, \xi) + U^2(l, \xi)-2P(l, \bar\xi))\\
& \qquad \qquad \qquad \times ((U(l, \bar\xi)-U(l, \xi))-(U(t, \bar\xi)-U(t, \xi)))dl.
\end{align*}
Observing that $P$ and $\check P$, given by \eqref{def:P} and \eqref{def:hQ}, have the same structure, we can use a similar splitting to the one for $\check P$ in \eqref{split:tiQ} and obtain, using \eqref{est:uinf}, \eqref{est:pxpinf}, Definition~\ref{def:Lagcoord} \eqref{cond:Lagcoord7}, \eqref{Gron:k}, and \eqref{help:est},
\begin{align*}
 & \vert (H(s,\bar\xi)-H(s,\xi)) -(H(t, \bar\xi)- H(t, \xi))\\
& \qquad - (U(t, \bar\xi)-U(t, \xi))\int_t^s (U^2(l, \bar\xi)+ U(l, \bar\xi)U(l, \xi)+ U^2(l, \xi)-2P(l, \bar\xi)) dl \\
& \qquad +\frac12 ( e^{y(t, \xi)-y(t, \bar\xi)}-1)\int_t^s U(l, \xi)\int_{-\infty}^{\xi} e^{y(l, \eta) -y(l, \xi)}(U^2y_\xi+V_\xi)(l, \eta) d\eta dl\\
&\qquad +\frac12 (e^{y(t, \bar\xi)-y(t, \xi)}-1)\int_t^s U(l, \xi) \int_{\bar\xi}^\infty e^{y(l, \xi)- y(l, \eta)}(U^2y_\xi+V_\xi)(l, \eta) d\eta dl\vert \\
&\quad  \leq  \int _t^s 2(U^2(l, \bar\xi)+U^2(l, \xi)+ P(l, \bar\xi)) \vert (U(l, \bar\xi)-U(l, \xi))-(U(t, \bar\xi)- U(t, \xi))\vert dl \\
& \qquad +C \int_t^s   \int_{\xi}^{\bar\xi} (y(l, \bar\xi)-y(l, \xi)) e^{y(l, \bar\xi)-y(l, \xi)} (U^2y_\xi+V_\xi)(l, \eta) d\eta dl\\
& \qquad + C \int_t^s \vert (y(l, \bar\xi)-y(l, \xi))-(y(t, \bar\xi)- y(t, \xi))\vert e^{\max _{\bar l\in [s,t]}(k(\bar l, \bar\xi)-k(\bar l, \xi))} dl\\
& \quad \leq C\int_t^s \norm{(Z(l, \bar \xi)-Z(l, \xi))-(Z(t, \bar\xi)-Z(t, \xi))} dl \\
& \qquad + C \int_t^s (k(l, \bar\xi)-k(l, \xi))\int_{\xi}^{\bar\xi} 2V_\xi (l, \eta) d\eta dl\\
& \quad \leq C(e^{C(s-t)}-1) (s-t) \norm{Z(t, \bar\xi)-Z(t, \xi)}\\
& \qquad + Ce^{C(s-t)} (s-t)\int_t^s \vert U(l, \xi)\vert+ \max_{\hat \xi \in [\xi,\bar\xi]} U^2(l, \hat \xi)+ \hat P(l, \xi)+ \hat P(l, \bar\xi) dl \vert \bar\xi-\xi \vert \\
& \qquad + C \int_t^s (k(l, \bar\xi)-k(l, \xi))\int_{\xi}^{\bar\xi} 2V_\xi (l, \eta) d\eta dl.
\end{align*}
Dividing now both sides by $\bar\xi-\xi$ and taking the limit as $\bar\xi\to\xi$, we end up with 
\begin{align*}
\vert H_\xi(s, \xi)-H_\xi(t, \xi)& -\int_t^s 3U^2(l, \xi)-2P(l, \xi) dl U_\xi(t,\xi)+2\int_t^s QU(l, \xi) dl y_\xi(t, \xi) \vert \\
& \qquad \leq C(s-t)^2 \norm{Z_\xi(t, \xi)}\\
& \qquad \quad + Ce^{C(s-t)}(s-t) \int_t^s \vert U(l, \xi)\vert + U^2(l, \xi)+ 2\hat P(l, \xi)dl.
\end{align*}
Taking the $L^p$ norm for $p=2$ or $p=\infty$ on both sides, applying the Minkowski's inequality for integrals, recalling Lemma~\ref{lem:finest}, \eqref{est:Pp}, and \eqref{est:Up}, we finally have 
\begin{align*}
\Big\|H_\xi(s, \xi) -H_\xi(t, \xi) & - \int_t^s 3U^2(l, \xi)-2P(l, \xi) dlU_\xi(t,\xi)\\
& \qquad \qquad \qquad +2\int_t^s QU(l, \xi) dl y_\xi(t, \xi) \Big\|_p \leq C(s-t)^2.
\end{align*}
Using once more Minkowski's inequality for integrals, we can also conclude that for any $t<s$,
\begin{equation*}
H_\xi(s, \cdot)-H_\xi(t, \cdot)-\int_t^s (3U^2U_\xi-2PU_\xi-2QUy_\xi)(l, \cdot)dl \in L^2(\Real)\cap L^\infty(\Real).
\end{equation*}
Thus, for any $\Delta t$ such that $N\Delta t= t-s$, we have using Lemma~\ref{lem:finest}, \eqref{est:uinf}, and \eqref{est:pxpinf} once more,
\begin{align*}
& \norm{ H_\xi(s, \cdot)-H_\xi(t, \cdot)-\int_t^s (3U^2U_\xi-2PU_\xi-2QUy_\xi)(l, \cdot)dl}_p\\
& = \sum_{n=1}^N \|H_\xi(t+ n\Delta t, \cdot)- H_\xi(t+(n-1)\Delta t, \cdot) \\
& \qquad \qquad  - \int_{(n-1)\Delta t}^{n\Delta t} (3U^2-2P)U_\xi(t+l, \cdot) dl+ 2 \int_{(n-1)\Delta t}^{n\Delta t} QUy_\xi(t+l, \cdot)dl \|_p\\
& \leq \sum_{n=1}^N \|H_\xi(t+ n\Delta t, \cdot)-H_\xi(t+ (n-1)\Delta t, \cdot)\\
& \qquad \qquad - \int_{(n-1)\Delta t}^{n\Delta t} (3U^2-2P)(t+l, \cdot)dl U_\xi(t+ (n-1)\Delta t, \cdot)\\
& \qquad \qquad + 2\int_{(n-1)\Delta t}^{n\Delta t} QU(t+l, \cdot)dl y_\xi(t+(n-1)\Delta t, \cdot) \|_p\\
& \qquad \quad + \sum_{n=1}^N  \norm{\int_{(n-1)\Delta t}^{n\Delta t} (3U^2-2P)(t+l, \cdot)(U_\xi(t+l, \cdot)- U(t+(n-1)\Delta t, \cdot)) dl}_p\\
& \qquad \qquad + \sum_{n=1}^N 2 \norm{\int_{(n-1)\Delta t}^{n\Delta t} QU(t+l, \cdot) (y_\xi(t+l, \cdot)-y_\xi(t+(n-1)\Delta t, \cdot))dl}_p\\
& \leq CT\Delta t + C\sum_{n=1}^N \int_{(n-1)\Delta t}^{n\Delta t} \norm{y_\xi(t+l, \cdot)-y(t+ (n-1)\Delta t, \cdot)}_p dl\\
& \qquad \qquad + C\sum_{n=1}^N \int_{(n-1)\Delta t} ^{n\Delta t} \norm{U_\xi(t+l, \cdot)-U(t+ (n-1) \Delta t, \cdot)}_p dl\\
& \leq C\Delta t.
\end{align*}
Since we can choose any $\Delta t>0$, it follows that 
\begin{equation}
\norm{H_\xi(s, \cdot)-H_\xi(t, \cdot)- \int_t^s (3U^2U_\xi-2PU_\xi-2QUy_\xi)(l, \cdot)dl }_p =0, 
\end{equation}
which means that $H_\xi(t, \cdot)$ satisfies the following integral equation in $L^2(\Real)\cap L^\infty(\Real)$
\begin{equation*}
H_\xi(s, \xi)=H_\xi(t, \xi)+ \int_t^s (3U^2U_\xi-2PU_\xi-2QUy_\xi)(l, \xi ) dl.
\end{equation*}

\subsection{The final step in the proof of Theorem~\ref{main}} So far we have seen that $(y,U,V,H)$ are solutions to the system of integral equations corresponding to \eqref{sys:diffeq1}--\eqref{sys:diffeq3}, \eqref{def:Pb}, \eqref{def:Qb}, and \eqref{sys:diffeqxi1}--\eqref{sys:diffeqxi3} with 
\begin{equation*}
V(t,\xi)= \int_{-\infty}^\xi \mathbbm{1}_{\{ \zeta \in \Real\mid y_\xi(t,\zeta)\not=0\}}(\eta) H_\xi(t,\eta) d\eta.
\end{equation*}
Thus it remains to show that $V(t,\xi)$ satisfies \eqref{sys:diffeq4} on $[0,T]\times \Real$, i.e., 
\begin{equation}\label{equiv:V}
V(t,\xi)= \int_{-\infty}^\xi (1-\mathbbm{1}_{\{\zeta \mid \tau(\zeta)\leq t\}}(\eta))H_\xi(t,\eta) d\eta.
\end{equation}

Introduce
\begin{equation*}
\Omega(t)= \{\xi\in \Real\mid y_\xi(t,\xi)=0  \text{ or } y(t,\xi) \text{ is not differentiable}\},
\end{equation*}
 then \eqref{equiv:V} holds if and only if 
\begin{equation*}
\Omega(t)=\{\xi \mid \tau(\xi)\leq t \text{ or } y(t,\xi) \text{ is not differentiable}\} \quad \text{ for all } t\in [0,T].
\end{equation*}
Furthermore, by \eqref{def:tau}, it suffices to show that for any $s,t\in[0,T]$ with $t<s$
\begin{equation*}
\xi \in\Omega(s) \quad \text{ for almost every } \xi \in \Omega(t).
\end{equation*}

The function $\check P(t,\xi)$, given by \eqref{def:hQ}, can be written as $\check p(t,y(t,\xi))$, where 
\begin{equation*}
\check p(t,x)=\frac14  \int_{\Real } e^{-\vert x-y\vert } (2uu_x+ F)(t, y) dy,
\end{equation*}
with derivative 
\begin{equation*}
\check p_x(t,x)= -\frac14 \int_{\Real} \mathrm{sign}(x-y) e^{-\vert x-y\vert} (2uu_x+ F)(t, y) dy.
\end{equation*}
Moreover, 
\begin{align*}
\norm{\check p_x(t,\cdot)}_\infty\leq \nu(t, \Real) \leq \nu(0,\Real)= C \quad \text{ for all } t\in [0,T],
\end{align*} 
which implies that for any $\xi<\bar\xi$
\begin{equation*}
\vert \check P(t, \bar \xi)-\check P(t, \xi)\vert =\vert \check p(t, y(t, \bar \xi))-\check p(t, y(t, \xi))\vert \leq C \vert y(t, \bar \xi)-y(t,\xi)\vert.
\end{equation*}
Thus, by \eqref{def:hQ}
\begin{equation}\label{est:Qcrit}
\vert Q(t,\bar\xi)-Q(t,\xi)+ \frac12 (V(t,\bar\xi)-V(t,\xi))\vert \leq C\vert y(t,\bar\xi)-y(t,\xi)\vert.
\end{equation}

Next we establish that for any $s>t$, 
\begin{equation*}
U_\xi(s,\xi)\geq 0 \quad \text{ for almost every }\xi \in \Omega(t).
\end{equation*}
 Let $\xi<\bar\xi$ and recall the integral equation \eqref{sys:intyUH2}, which combined with \eqref{est:Qcrit} yields 
\begin{align}\nonumber
-(U(s,\bar\xi)-U(s,\xi))
& \leq -(U(t, \bar\xi)-U(t, \xi)) -\frac12 \int_t^s V(l, \bar\xi)-V(l, \xi)dl \\ \label{differ:U}
& \quad + C\int_t^s y(l, \bar\xi)-y(l, \xi)dl,
\end{align}
where the first integral on the right hand side is negative. 

Using once more \eqref{sys:intyUH1}--\eqref{sys:intyUH2} and \eqref{est:Qcrit}, we obtain for any $t\leq l\leq s$
\begin{align*}
y(l,\bar\xi)-y(l,\xi)
& = y(t, \bar\xi)-y(t,\xi)+ (l-t)(U(t, \bar\xi)-U(t, \xi))\\
& \quad -\int_t^l \int_t^m Q(n, \bar\xi)-Q(n, \xi) dn dm\\
& \leq y(t, \bar\xi)-y(t, \xi)+(l-t) (U(t, \bar\xi)-U(t, \xi))\\
& \quad +\frac12 \int_t^l \int_t^m V(n, \bar\xi)-V(n, \xi) dn dm\\
& \quad + C\int_t^l\int_t^m y(n, \bar\xi)-y(n, \xi)dn dm\\
& \leq y(t, \bar\xi)-y(t, \xi)+(s-t )\vert U(t, \bar\xi)-U(t, \xi)\vert\\
& \quad + \frac12 (s-t) \int_t^s V(n, \bar\xi)-V(n, \xi)dl \\
& \quad + C(s-t) \int_t^l y(n, \bar\xi)-y(n, \xi)dn,
\end{align*}
which implies, using Gronwall's inequality,
\begin{align*}
y(l, \bar\xi)-y(l, \xi)&\leq e^{C(s-t)(l-t)}\Big(y(t, \bar\xi)-y(t, \xi)+ (s-t)\vert U(t, \bar\xi)-U(t, \xi)\vert\\
&\qquad\qquad  \qquad  \quad +\frac12(s-t)\int_t^s V(n, \bar\xi)-V(n, \xi)dn\Big).
\end{align*}
Plugging now this very last inequality into \eqref{differ:U}, we end up with
\begin{align*}
-(U(s,\bar\xi)-U(s,\xi))&\leq -(U(t, \bar\xi)-U(t,\xi))+Ce^{C(s-t)^2}(s-t)^2 \vert U(t, \bar\xi)-U(t, \xi)\vert \\
& \qquad + Ce^{C(s-t)^2} (s-t) (y(t, \bar\xi)-y(t,\xi))\\
& \quad -\frac12(1-Ce^{C(s-t)^2}(s-t)^2) \int_t^s V(l, \bar\xi)-V(l, \xi)dl\\
& \leq  -(U(t, \bar \xi)-U(t, \xi))+Ce^{C(s-t)^2}(s-t)^2 \vert U(t, \bar\xi)-U(t, \xi)\vert \\
& \qquad + Ce^{C(s-t)^2} (s-t) (y(t, \bar\xi)-y(t,\xi)),
\end{align*}
if $Ce^{C(s-t)^2}(s-t)^2<1$.
Dividing now both sides by $\bar\xi-\xi$ and taking the limit as $\bar\xi\to\xi$, we finally have for almost every $\xi\in \Real$, 
\begin{equation*}
-U_\xi(s, \xi)\leq -U_\xi(t, \xi)+ Ce^{C(s-t)^2}(s-t) \big((s-t) \vert U_\xi(t,\xi)\vert + y_\xi(t, \xi)\big).
\end{equation*}
For $\xi\in \Omega(t)$, the right hand side of the above inequality equals $0$, and hence we have shown that 
\begin{equation}\label{cont:cond2}
U_\xi(s, \xi)\geq 0  \quad \text{ for almost every } \xi \in \Omega(t).
\end{equation} 

Furthermore, by \eqref{sys:intyUH1}, Definiton \ref{def:Lagcoord} \eqref{cond:Lagcoord7}, \eqref{ident:relcl}, and \eqref{Gron:k}, there exists a constant $M>0$ such that 
\begin{align*}
y(s,\bar\xi)-y(s,\xi)&= y(t, \bar\xi)-y(t,\xi)+ \int_t^s U(l, \bar\xi)-U(l, \xi)dl\\
& \leq y(t, \bar\xi)-y(t,\xi)+ \int_t^s k(l, \bar\xi)-k(l, \xi)dl\\
& \leq y(t, \bar\xi)-y(t,\xi)+ \int_t^s e^{Ml} dl (\bar\xi-\xi)\\
& = y(t, \bar\xi)-y(t, \xi)+ \frac1{M} (e^{Ms}-e^{Mt}) (\bar\xi-\xi)\\
& \leq y(t, \bar\xi)-y(t, \xi)+ (s-t)e^{MT} (\bar\xi-\xi).
\end{align*}
Dividing now both sides by $\bar\xi-\xi$ and taking the limit as $\bar\xi\to\xi$, we finally have for almost every $\xi\in \Real$,
\begin{equation*}
y_\xi(s, \xi)\leq y_\xi(t,\xi)+ (s-t)e^{MT}.
\end{equation*}
Choosing now $\xi\in \Omega(t)$ in the above inequality, we end up with 
\begin{equation}\label{cont:cond}
y_\xi(s,\xi)\leq (s-t) e^{MT} \quad \text{ for almost every } \xi\in \Omega(t).
\end{equation}

Finally, we relate $y_\xi(t,\xi)$ to the pair $(u,\nu)(t)$. Therefore pick a relabeling function $g$, then the mapping $L_g: \D\to \F$ given by 
\begin{align*}
 \hat y(\xi)&= \sup\{x\mid x+ \nu((-\infty,x))<g(\xi)\},\\
 \hat H(\xi)& =g(\xi)-\hat y(\xi),\\
 \hat U(\xi)&= u(\hat  y(\xi)),\\
 \hat V(\xi)&= \int_{-\infty}^{\hat y(\xi)} (u^2+u_x^2)(z)dz = \int_{-\infty}^{\xi} \hat H_\xi(\eta) \mathbbm{1}_{\{\zeta\mid \hat y_\xi(\zeta)\not=0\}}(\eta) d\eta
\end{align*}
 associates to each element $(u,\nu)$ a quadraple $(\hat y, \hat U,\hat V,\hat H)$. Furthermore, a closer look at Definition~\ref{def:EultoLag} reveals that 
\begin{equation*}
L_g((u,\nu))=L((u,\nu))\bullet g, 
\end{equation*}
i.e., $L$ and $L_g$ map to the same equivalence class, but different representatives, in Lagrangian coordinates. In addition, most of the properties of $L$ carry over to $L_g$. In particular, choosing $g(\xi)=k(t,\xi)$, we have $L_{k(t,\cdot)}((u(t), \nu(t))=(y,U,V,H)(t)$. Furthermore, one can establish as in the proof of  \cite[Theorem 3.8]{HR2} that for almost every $\xi$ either $y_\xi(t,\xi)=U_\xi(t,\xi)=0$ and $y(t,\xi)\in \mathrm{supp}(\nu_{\mathrm{sing}}(t))$ or $ U_\xi(t,\xi)= u_x(t,y(t,\xi)) y_\xi(t,\xi)$ and 
\begin{equation}\label{rel:yxi}
(1+u_x^2(t, y(t,\xi))) y_\xi(t,\xi)=k_\xi(t,\xi).
\end{equation}
Moreover, one has for almost every $\xi$ that $u_x(t,y(t,\xi))>0$ implies $U_\xi(t,\xi)\geq 0$, while $u_x(t,y(t,\xi))<0$ yields $U_\xi(t,\xi)\leq 0$. Recalling \eqref{rel:yxi}, Lemma~\ref{lem:k}, and \eqref{oneside:Lip}, it follows that if $U_\xi(t,\xi)\geq 0$, then either
\begin{equation}\label{cont:cond3}
y_\xi(t, \xi)=0 \quad \text{ or } \quad y_\xi(t,\xi)\geq \frac{e^{-Mt}}{1+D^2}>0. 
\end{equation}
Comparing now \eqref{cont:cond} and \eqref{cont:cond2} with \eqref{cont:cond3} yields that all of them can only be satisfied if for all $t\leq  s$
\begin{equation*}
y_\xi(t,\xi)=0 \quad \text{ implies }\quad  y_\xi(s,\xi)=0 \quad \text{ for almost every } \xi \in \Real,
\end{equation*}
which is equivalent to $\xi \in \Omega(s)$ for almost every $\xi\in\Omega(t)$. 

\vspace{0.2cm}
To complete the proof of Theorem~\ref{main} one observation is important. When deriving a lower bound on $U_\xi(t,\xi)$ in this section, we encountered for the first time a condition on the choice of $T$, namely $CT^2 e^{CT^2}<1$, where $C=\nu(0, \Real)$ and hence independent of time. Thus we have so far only shown that on a small time interval $[0,T]$ the solution $(u,\nu)$ coincides with the dissipative solution constructed in \cite{GHR2} and hence is unique. But the above argument can be carried out on any interval $[T_1, T_2]$, whose length $T_2-T_1$ satisfies $C(T_2-T_1)^2 e^{C(T_2-T_1)^2}<1$, so that   considering the chain of intervals $[0, T]$, $[\frac12 T, \frac32 T]$, $[T, 2T]$, $[\frac32T, \frac52T]$, $\dots$, finishes the proof of Theorem~\ref{main}.

\section{Uniqueness of weak dissipative solutions $u$} \label{subsec:equivcl}

The set $\D$ of Eulerian coordinates equipped with the equivalence relation given by Definition~\ref{def:equivEuler} allows to identify each $u\in H^1_u(\Real)$ with an equivalence class in $\D$. If the solution operator from \cite{GHR2}, which associates to each pair $(u_0,\nu_0)\in \D$ the corresponding unique weak dissipative solution $(u,\nu)$, see Theorem~\ref{main}, respects this equivalence relation, the following result holds.

\begin{theorem}\label{main2}
For any initial data $u_0\in H^1_u(\Real)$ the Camassa--Holm equation has a unique global weak dissipative solution $u$ in the sense of Definition~\ref{def:dissol}.
\end{theorem}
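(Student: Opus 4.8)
The plan is to obtain Theorem~\ref{main2} from Theorem~\ref{main} by showing that the wave profile delivered by the solution operator of \cite{GHR2} is insensitive to the auxiliary measure; this independence is the content of Lemma~\ref{lem:main2}. Given $u_0\in H^1_u(\Real)$, I would single out the minimal admissible datum $\nu_*(0)=(u_0^2+u_{0,x}^2)\,dx$ and let $(u_*,\nu_*)$ be the corresponding weak dissipative solution provided by Theorem~\ref{main}. Every $(u_0,\nu_0)\in\D$ satisfies $\nu_0\ge\nu_*(0)$, with $\mu_0:=\nu_0-\nu_*(0)=(\nu_0)_{\mathrm{sing}}$ a positive measure that is singular with respect to Lebesgue measure, so it is enough to prove that the weak dissipative solution $(u,\nu)$ with initial data $(u_0,\nu_0)$ always has $u=u_*$. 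Granting this: $u_*$ satisfies the first condition of Definition~\ref{def:dissol} by the first condition of Definition~\ref{def:dissol2}, and it satisfies Definition~\ref{def:dissol}~\eqref{cond:dissol:2} by the independence just stated, so $u_*$ is a weak dissipative solution; and if $u'$ is any other weak dissipative solution with $u'(0)=u_0$, then applying Definition~\ref{def:dissol}~\eqref{cond:dissol:2} to $u'$ together with the completion $(u_*,\nu_*)$ forces $u'=u_*$. Hence both existence and uniqueness follow.

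To establish $u=u_*$ I would manufacture, out of the data $(u_0,\nu_0)$, a weak dissipative solution whose wave profile is $u_*$, and then invoke the uniqueness half of Theorem~\ref{main}. The structural input is that $u_*$ obeys the one-sided Lipschitz bound $u_{*,x}(t,\cdot)\le D$ of Definition~\ref{def:dissol2}~\eqref{cond:dissol2:22} uniformly in $t$ and is bounded and H{\"o}lder continuous. A Gronwall estimate for $w=|z_1-z_2|$, exploiting $u_*(t,z_2)-u_*(t,z_1)\le D(z_2-z_1)$ for $z_1<z_2$, shows that the characteristic equation $\dot z=u_*(t,z)$ has a unique \emph{forward} solution from each starting point, even through times at which characteristics coincide. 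Thus the forward flow $\Phi(t,\cdot):\Real\to\Real$ is well defined; it is nondecreasing, surjective, and Lipschitz with constant $e^{Dt}$, whence it maps Lebesgue-null sets to Lebesgue-null sets. Consequently $\Phi(t,\cdot)_{\#}\mu_0$ is, for each $t$, a positive finite singular Radon measure with total mass $\mu_0(\Real)$, it depends weakly continuously on $t$, and $t\mapsto\Phi(t,\cdot)_{\#}\mu_0$ is a weak solution of the homogeneous continuity equation $w_t+(u_*w)_x=0$ with datum $\mu_0$ (push-forward along the flow of a bounded continuous drift always solves the continuity equation; no injectivity of $\Phi$ is needed).

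I would then set $\hat\nu(t):=\nu_*(t)+\Phi(t,\cdot)_{\#}\mu_0$ and check that $(u_*,\hat\nu)$ is a weak dissipative solution in the sense of Definition~\ref{def:dissol2} with initial data $(u_0,\nu_0)$. Indeed $\hat\nu(0)=\nu_*(0)+\mu_0=\nu_0$; $\hat\nu(t)\in\mathcal{M}^+(\Real)$; the absolutely continuous part of $\hat\nu(t)$ is that of $\nu_*(t)$, namely $(u_*^2+u_{*,x}^2)(t)\,dx$, because $\Phi(t,\cdot)_{\#}\mu_0$ is singular; $\hat\nu$ is weakly continuous in $t$; and since $\nu_*$ satisfies \eqref{weak:trans} with $u=u_*$, $p=p_*$ while $\Phi(t,\cdot)_{\#}\mu_0$ satisfies the homogeneous counterpart, $\hat\nu$ satisfies \eqref{weak:trans} with $u=u_*$, $p=p_*$. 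All remaining clauses of Definition~\ref{def:dissol2}, namely \eqref{cond:dissol2:2}--\eqref{cond:dissol2:7} together with the definitions of $F$ and $p$, involve only $u_*$, $p_*$ and $F_*$ and therefore hold because $(u_*,\nu_*)$ is itself a weak dissipative solution. By the uniqueness statement of Theorem~\ref{main}, $(u_*,\hat\nu)$ must be \emph{the} weak dissipative solution with data $(u_0,\nu_0)$, so $u=u_*$, as required.

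I expect the only real work to be in justifying the flow-theoretic ingredients: forward uniqueness of $\dot z=u_*(t,z)$ from the one-sided bound (the argument has to accommodate characteristics that merge), the Lipschitz continuity---hence null-set preservation---of $\Phi(t,\cdot)$, and the fact that $\Phi(t,\cdot)_{\#}\mu_0$ solves the homogeneous continuity equation weakly; these are standard but are precisely the points where the hypothesis that $u_{0,x}$ is bounded above, propagated to all times via Definition~\ref{def:dissol2}~\eqref{cond:dissol2:22}, is indispensable. An alternative, more computational route would compare, for the choices $\nu_*(0)$ and $\nu_0$, the two Lagrangian quadruples built in the proof of Theorem~\ref{main}, and exhibit a relabeling-type map identifying their $(y,U,V)$-parts, using that $P$ and $Q$ in \eqref{def:Pb}--\eqref{def:Qb} depend on $H_\xi$ only through the combination $U^2y_\xi+V_\xi$.
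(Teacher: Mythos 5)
Your proposal is correct, and its logical skeleton (reduce to the minimal completion $\nu_*(0)=(u_0^2+u_{0,x}^2)\,dx$ and show every other completion returns the same wave profile) matches the paper's; but the mechanism you use for the key independence statement, Lemma~\ref{lem:main2}, is genuinely different. The paper stays in Lagrangian coordinates: it builds a degenerate relabeling $g$ with $g_\xi\in\{0,1\}$ a.e.\ (vanishing exactly where $y_{0,B,\xi}=0$) such that $(y_{0,A}\circ g, U_{0,A}\circ g, V_{0,A}\circ g)=(y_{0,B},U_{0,B},V_{0,B})$, propagates the identity $(y_A,U_A,V_A)(t,g(\xi))=(y_B,U_B,V_B)(t,\xi)$ in time using the structure of \eqref{sys:diffeq}--\eqref{sys:diffeqxi}, and then maps back to Eulerian variables --- essentially the ``alternative, more computational route'' you sketch at the end. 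You instead work in Eulerian variables: you exhibit the explicit competitor $\hat\nu(t)=\nu_*(t)+\Phi(t,\cdot)_{\#}(\nu_0)_{\mathrm{sing}}$, check all clauses of Definition~\ref{def:dissol2} for $(u_*,\hat\nu)$ with data $(u_0,\nu_0)$, and invoke the uniqueness half of Theorem~\ref{main}. This is sound: the one-sided bound \eqref{oneside:Lip} gives forward uniqueness of $\dot z=u_*(t,z)$ and the Lipschitz constant $e^{Dt}$ for $\Phi(t,\cdot)$; Lipschitz maps send null sets to null sets, so the transported surplus stays singular and $\hat\nu_{ac}(t)=(u_*^2+u_{*,x}^2)(t)\,dx$; the push-forward solves the homogeneous continuity equation by the usual test-function/Fubini computation, which added to \eqref{weak:trans} for $\nu_*$ gives \eqref{weak:trans} for $\hat\nu$; and clauses \eqref{cond:dissol2:2}--\eqref{cond:dissol2:7} involve only $u_*$ and $p_*$. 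What your route buys is brevity and a transparent explanation of why $\nu$ is a dummy variable (the singular surplus is merely advected by the forward flow and never feeds back into $u$), at the price of setting up the flow-theoretic facts from scratch; the paper's route needs no ODE theory for $u_*$ itself and produces, as a by-product, the explicit correspondence between the Lagrangian representatives of the two solutions, which it needs anyway to return to Eulerian coordinates.
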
 

It therefore remains to prove the following lemma.

\begin{lemma}\label{lem:main2}
Given two weak dissipative solutions $(u_A,\nu_A)$ and $(u_B, \nu_B)$ with initial data $(u_{0,A}, \nu_{0,A})$ and $(u_{0,B}, \nu_{0,B})$ in the sense of Definition~\ref{def:dissol2}. If 
\begin{equation}\label{ini:equivcla}
u_{0,A}=u_{0,B},
\end{equation} 
then 
\begin{equation*}
u_A(t)=u_B(t) \quad \text{ for all }t\geq 0.
\end{equation*}
\end{lemma}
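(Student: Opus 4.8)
The plan is to reduce the statement, via Theorem~\ref{main}, to a property of the Lagrangian formulation, and then to show directly that the singular part of the dummy measure never influences the wave profile. By Theorem~\ref{main}, both $(u_A,\nu_A)$ and $(u_B,\nu_B)$ coincide with the dissipative solutions of \cite{GHR2}, and the construction carried out in Section~\ref{sec:unique} produces, for a dissipative solution with initial data $(u_0,\nu_0)\in\D$, a curve $(y,U,V,H)(t,\cdot)\in\F$ which solves \eqref{sys:diffeq} with $(y,U,V,H)(0,\cdot)=L((u_0,\nu_0))$ (cf.\ Definition~\ref{def:EultoLag}) and satisfies $u(t,y(t,\xi))=U(t,\xi)$, i.e.\ $M((y,U,V,H)(t,\cdot))=(u(t),\nu(t))$. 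Since \eqref{sys:diffeq} is uniquely solvable in $\F$, it suffices to prove that the wave profile recovered from the solution of \eqref{sys:diffeq} started at $L((u_0,\nu_0))$ depends on $\nu_0$ only through its absolutely continuous part $(u_0^2+u_{0,x}^2)\dx$; applying this with $\nu_0=\nu_{0,A}$ and $\nu_0=\nu_{0,B}$ and using \eqref{ini:equivcla} then yields $u_A(t,\cdot)=u_B(t,\cdot)$ for all $t\ge0$.

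So fix an arbitrary $(u_0,\nu_0)\in\D$, set $\nu_0^\circ=(u_0^2+u_{0,x}^2)\dx\in\mathcal M^+(\Real)$ (its absolutely continuous part, so that $(u_0,\nu_0^\circ)\in\D$), and let $(y,U,V,H)$ and $X^\circ=(y^\circ,U^\circ,V^\circ,H^\circ)$ be the solutions of \eqref{sys:diffeq} with initial data $L((u_0,\nu_0))$ and $L((u_0,\nu_0^\circ))$, with associated wave profiles $u$ and $u^\circ$. Comparing the two instances of Definition~\ref{def:EultoLag} (concretely $\pi:=h^\circ\circ y(0,\cdot)$, where $h^\circ(x)=x+\int_{-\infty}^x(u_0^2+u_{0,x}^2)\,dz$), one produces a non-decreasing, absolutely continuous surjection $\pi:\Real\to\Real$ with $\pi-\id$ bounded, which is a translation away from the singular part of $\nu_0$ and constant on the intervals contributed to $\xi$-space by that singular part — so that in particular $\pi_\xi\in\{0,1\}$ a.e.\ — and which satisfies $(y,U,V)(0,\xi)=(y^\circ,U^\circ,V^\circ)(0,\pi(\xi))$ and $H(0,\xi)=\xi-\pi(\xi)+H^\circ(0,\pi(\xi))$ for all $\xi$.

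The heart of the argument is to show that this relation is preserved by the flow:
\begin{equation*}
(y,U,V)(t,\xi)=(y^\circ,U^\circ,V^\circ)(t,\pi(\xi)),\qquad H(t,\xi)=\xi-\pi(\xi)+H^\circ(t,\pi(\xi)),\qquad t\ge0.
\end{equation*}
I would prove this by setting $Y:=\big(y^\circ(\cdot,\pi(\cdot)),\,U^\circ(\cdot,\pi(\cdot)),\,V^\circ(\cdot,\pi(\cdot)),\,\id-\pi+H^\circ(\cdot,\pi(\cdot))\big)$ and verifying that $Y(t,\cdot)\in\F$ for every $t$ and that $Y$ solves \eqref{sys:diffeq} with initial data $L((u_0,\nu_0))$; uniqueness of solutions of \eqref{sys:diffeq} in $\F$ then forces $(y,U,V,H)=Y$. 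The change of variables $\zeta=\pi(\eta)$ (legitimate since $\pi$ is monotone and absolutely continuous), together with the identity $V(\cdot,\cdot)=V^\circ(\cdot,\pi(\cdot))$, shows that the functions $P$ and $Q$ built from $Y$ via \eqref{def:Pb}--\eqref{def:Qb} satisfy $P(t,\xi)=P^\circ(t,\pi(\xi))$ and $Q(t,\xi)=Q^\circ(t,\pi(\xi))$; granting this, \eqref{sys:diffeq1}--\eqref{sys:diffeq3} for $Y$ follow at once, since $\pi$ is time-independent. The delicate point is \eqref{sys:diffeq4}: here one uses $\pi_\xi\in\{0,1\}$ together with the fact that on $\{\pi_\xi=0\}$ the first component of $Y$ has vanishing $\xi$-derivative, so the corresponding wave-breaking time equals $0$, while on $\{\pi_\xi=1\}$ it coincides with that of $y^\circ$ at the point $\pi(\xi)$; this lets one collapse the defining integral in \eqref{sys:diffeq4} for $Y$, again by $\zeta=\pi(\eta)$, onto the corresponding integral for $X^\circ$. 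Likewise $Y(t,\cdot)\in\F$ is checked condition by condition from $X^\circ(t,\cdot)\in\F$ and the structure of $\pi$; for instance Definition~\ref{def:Lagcoord}\eqref{cond:Lagcoord7} is inherited from $X^\circ$ where $\pi_\xi=1$ and reads $0=0$ where $\pi_\xi=0$.

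Granting the propagated identity, the lemma follows quickly: for $t\ge0$ and $x\in\Real$, since $y(t,\cdot)-\id$ is bounded and continuous there is $\xi$ with $y(t,\xi)=x$, and then $\zeta:=\pi(\xi)$ satisfies $y^\circ(t,\zeta)=y(t,\xi)=x$, so $u^\circ(t,x)=U^\circ(t,\zeta)=U^\circ(t,\pi(\xi))=U(t,\xi)=u(t,x)$; hence $u=u^\circ$, and in particular $u_A=u^\circ=u_B$. The main obstacle is precisely the propagation step, and within it the handling of the wave-breaking times and of \eqref{sys:diffeq4} under the (non-injective) reparametrisation $\pi$: one has to show that the extra flat intervals contributed by the singular part of $\nu_0$ carry zero energy from the outset and remain inert, so that they never interfere with the genuine wave breaking recorded by $y^\circ$.
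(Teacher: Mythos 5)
Your proposal is correct and follows essentially the same route as the paper: your map $\pi(\xi)=y(0,\xi)+F(0,y(0,\xi))=y(0,\xi)+V(0,\xi)$ is exactly the paper's function $g$ from \eqref{def:g} (with derivative in $\{0,1\}$, flat precisely on the labels carrying the singular part of $\nu_0$), the propagation of $(y,U,V)(t,\cdot)=(y^\circ,U^\circ,V^\circ)(t,\pi(\cdot))$ via uniqueness of the Lagrangian system \eqref{sys:diffeq} is the paper's generalized relabeling step (which it delegates to the argument of \cite[Proposition 5.4]{HR}), and the final passage back to Eulerian coordinates is identical. If anything, you spell out more of the verification (the change of variables in $P$, $Q$ and the treatment of \eqref{sys:diffeq4} and the wave-breaking times on the flat set) than the paper does.
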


\begin{proof}
 It suffices to prove the claim for $d\nu_{0,A}=(u_{0,A}^2+u_{0,A,x}^2) dx$, which we assume from now on.

Let $L((u_{0,i}, \nu_{0,i}))=(y_{0,i}, U_{0,i}, V_{0,i}, H_{0,i})$ for $i=A$, $B$. We claim there exists an increasing and Lipschitz continuous function $g$ such that 
\begin{equation}\label{prop:quasirel}
(y_{0,A}\circ g, U_{0,A}\circ g, V_{0,A}\circ g)=(y_{0,B}, U_{0,B}, V_{0,B}).
\end{equation}

Since $V_{0,A}=H_{0,A}$,
\begin{equation*}
y_{0,A}(\xi)+V_{0,A}(\xi)=\xi \quad \text{ for all } \xi \in \Real.
\end{equation*}
For $V_{0,B}(\xi)$, on the other hand, we have that 
\begin{equation}\label{rel:HBVB}
V_{0,B,\xi}(\xi)=\mathbbm{1}_{\{\zeta\mid y_{0,B,\xi}(\zeta)\not =0\}}(\xi) H_{0,B,\xi}(\xi) \quad \text{ for all } \xi \in \Real,
\end{equation}
which implies that 
\begin{align*}
y_{0,B}(\xi)+V_{0,B}(\xi)&= y_{0,B}(\xi)+H_{0,B}(\xi)+ V_{0,B}(\xi)-H_{0,B}(\xi)\\
& = \xi -\int_{-\infty} ^\xi (1-\mathbbm{1}_{\{ \zeta \mid y_{0,B, \xi} (\zeta)\not =0\}}(\eta))H_{0,B,\xi}(\eta)d\eta,
\end{align*}
where the function on the right hand side is increasing and Lipschitz continuous with Lipschitz constant at most one. Introduce 
\begin{equation}\label{def:g}
g(\xi)= \xi- \int_{-\infty} ^\xi (1-\mathbbm{1}_{\{ \zeta \mid y_{0,B, \xi} (\zeta)\not =0\}}(\eta))H_{0,B,\xi}(\eta)d\eta,
\end{equation}
then 
\begin{equation}\label{prop:quasirel2}
y_{0,B}(\xi)+V_{0,B}(\xi)=g(\xi)=y_{0,A}(g(\xi))+V_{0,A}(g(\xi)) \quad \text{ for all } \xi \in \Real.
\end{equation}

Having identified a candidate for the function $g$ we are looking for, it remains to show that \eqref{prop:quasirel} holds. If $y_{0,A}\circ g \not = y_{0,B}$, then there exists a $\bar \xi\in\Real$ such that $y_{0,A}(g(\bar \xi))\not= y_{0,B}(\bar \xi)$ and without loss of generality we assume 
\begin{equation}\label{contr}
y_{0,A}(g(\bar \xi))<y_{0,B}(\bar \xi).
\end{equation}

Definition~\ref{def:EultoLag} implies that 
\begin{equation*}
V_{0,i}(\xi)= \int_{-\infty}^{y_{0,i}(\xi)} (u_{0,i}^2+u_{0,i,x}^2) dx \quad \text{ for all } \xi \in \Real \text{ and } i=A, B,
\end{equation*}
which combined with \eqref{ini:equivcla} and \eqref{prop:quasirel2} yields 
\begin{align*}
\int_{-\infty}^{y_{0,B}(\bar \xi)} (u_{0,A}^2+u_{0,A,x}^2) dx & =\int_{-\infty}^{y_{0,B}(\bar \xi)}(u_{0,B}^2+u_{0,B,x}^2) dx= V_{0,B} (\bar \xi)\\
& < V_{0,A} (g(\bar \xi))\leq \int_{-\infty}^{y_{0,A}(g(\bar \xi))} (u_{0,A}^2+u_{0,A,x}^2) dx.
\end{align*}
Since the above inequality can only hold if $y_{0,B}(\bar\xi)\leq y_{0,A}(g(\bar\xi))$, we end up with a contradiction to \eqref{contr}. Thus $y_{0,A}\circ g=y_{0,B}$, $V_{0,A} \circ g= V_{0,B}$, and, by Definition~\ref{def:EultoLag},
\begin{equation*}
 U_{0,A} \circ g= u_{0,A}\circ y_{0,A} \circ g= u_{0,A} \circ y_{0,B}=u_{0,B} \circ y_{0,B}=U_{0,B},
 \end{equation*}
 which finishes the proof of \eqref{prop:quasirel}.

Next, we show
\begin{equation}\label{prop:relok}
(y_A, U_A, V_A)(t, g(\xi))= (y_B, U_B, V_B)(t, \xi) \quad \text{ for all } \xi \in \Real \text{ and } t\geq 0.
\end{equation}

The function $g$, given by \eqref{def:g}, is increasing and Lipschitz continuous, and hence differentiable almost everywhere with derivative 
\begin{equation}\label{expl:g}
g_\xi(\xi)=1+(V_{0,B,\xi}-H_{0,B,\xi})(\xi)
 = \begin{cases} 0,& \text{ if } \xi\in \hat S,\\
 1, & \text{ otherwise}
 \end{cases}
\end{equation}
where 
\begin{equation}\label{def:hatS}
\hat S=\{\xi \in \Real \mid y_{0,B, \xi}(\xi)=0\}. 
\end{equation}
Furthermore, for every $t\geq 0$, the functions $(y_A(t,\cdot), U_A(t,\cdot), H_A(t,\cdot))$ are Lipschitz continuous, which implies that also $V_A(t,\cdot)$ is Lipschitz continuous, since 
 \begin{equation*}
 \vert V_A(t,\xi_1)-V_A(t,\xi_2)\vert \leq \vert H_A(t,\xi_1)-H_A(t,\xi_2)\vert \quad \text{ for all } \xi_1, \xi_2\in \Real,
 \end{equation*}
 and hence 
 \begin{equation*}
 (y_{A}(t,g(\xi)))_\xi = (U_{A}(t,g(\xi)))_\xi= (V_{A}(t, g(\xi)))_\xi=0 \quad \text{for almost every } \xi \in \hat S.
 \end{equation*}
Finally, by \eqref{def:hatS}, \eqref{sys:diffeqxi}, and Definition~\ref{def:Lagcoord} \eqref{cond:Lagcoord7}, we have
\begin{align*}
(y_A(t,g(\xi)))_\xi&=0=y_{B,\xi}(t,\xi),\\
(U_A(t,g(\xi)))_\xi&=0=U_{B,\xi}(t,\xi),\\
(V_A(t,g(\xi)))_\xi&=0=V_{B,\xi}(t,\xi),
\end{align*}
for almost every $\xi\in \hat S$. 

Next, introduce the set
\begin{equation*}
Z=\hat S\cup \{\xi \in\Real \mid y_{0,B} \text{ or } g \text{ are not differentiable}\},
\end{equation*}
which satisfies $\meas(Z)=\meas(\hat S)$. Then the definition of $\hat S$ and the fact that $g$ is Lipschitz continuous imply that $\meas(g(Z))=0$. Furthermore, $y_{0,A}$ is differentiable almost everywhere on $g(Z)^c$ and hence 
\begin{equation*}
(y_{0,A}(g(\xi)))_\xi= y_{0,A,\xi}(g(\xi))g_\xi(\xi)=y_{0,B,\xi}(\xi) \quad \text{ for almost every } \xi \in Z^c.
\end{equation*}
To finish the proof of \eqref{prop:relok} a generalized relabeling argument as for showing \cite[Proposition 5.4]{HR} can be used, which we do not repeat here.

Finally, we can apply the mapping $M$ to go back to Eulerian coordinates as follows. Let $(t,x)\in \Real^+\times \Real$, then there exists $\xi \in \Real$ such that 
\begin{equation*}
y_A(t,g(\xi))=x=y_B(t,\xi),
\end{equation*}
and hence 
\begin{equation*}
u_A(t,x)=U_A(t, g(\xi))=U_B(t,\xi)=u_B(t,x).
\end{equation*}
\end{proof}

\appendix


\section{A peakon-antipeakon example}\label{pap}

Over the last 20 years the so-called peakon-antipeakon solution has attracted a lot of attention, since wave breaking occurs and the weak solution cannot be uniquely continued thereafter, see, e.g., \cite{W, GHR2, GH2}. Moreover, this solution can be computed explicitly and hence is a rich source of inspiration when developing analytical methods as well as numerical algorithms. 

Given $p(0)>0$ and $q(0)<0$, let 
\begin{equation}\label{ini:pap}
u(0,x)= p(0)(e^{-\vert x-q(0)\vert }- e^{-\vert x+ q(0)\vert}),
\end{equation}
and set
\begin{equation*}
D^2= p^2(0)(1-e^{2q(0)}).
\end{equation*}
Denoting by $t^*>0$ the time when wave breaking occurs, which is given by
\begin{equation*}
t^*=\frac{1}{2D} \ln\left(\frac{p(0)+D}{p(0)-D}\right),
\end{equation*}
the dissipative peakon-antipeakon solution with initial data
\eqref{ini:pap} reads
\begin{equation*}
u(t,x)= \begin{cases} p(t)(e^{-\vert x-q(t)\vert} - e^{-\vert x+ q(t)\vert}), & \quad (t,x) \in [0, t^*)\times \Real,\\
0, & \quad \text{otherwise},
\end{cases}
\end{equation*}
where $p(t)>0$ and $q(t)<0$ are given by 
\begin{equation*}
p(t)= D\frac{1+ e^{2D(t-t^*)}}{1-e^{2D(t-t^*)}} \quad \text{ and } \quad q(t)= \ln\left(\frac{2e^{D(t-t^*)}}{1+ e^{2D(t-t^*)}}\right).
\end{equation*}
In particular, 
\begin{equation*}
\lim_{t\to t^*} u(t,x)=0 \quad \text{ for all } x\in \Real
\end{equation*}
and $u(t,x)$ is continuous on $\Real^+\times \Real$.

\begin{figure}
\includegraphics[width=8cm]{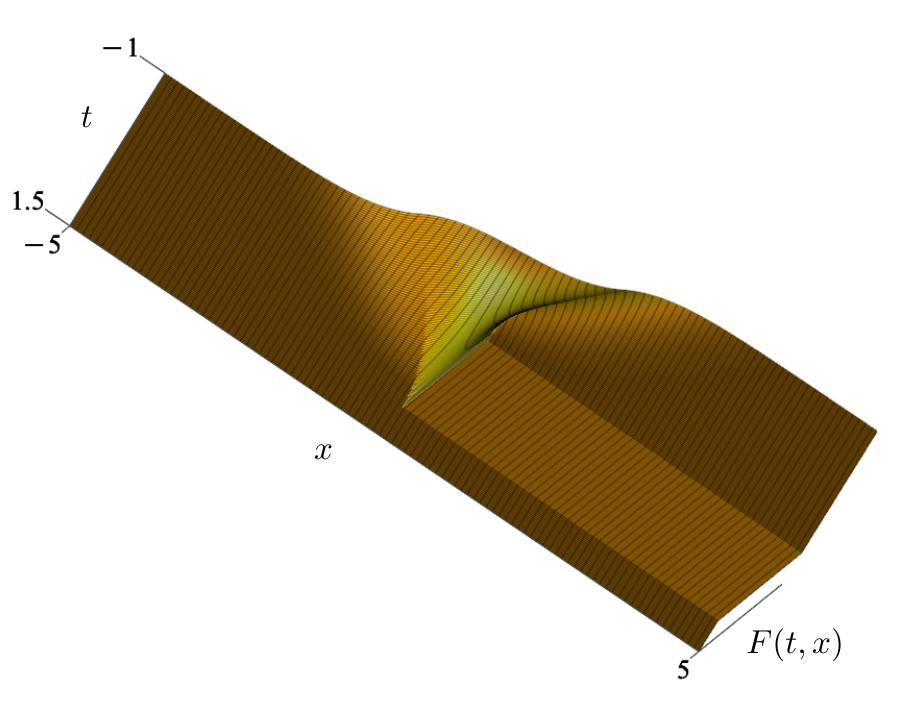}

\caption{Plot of the function $u(t,x)$ for $t\in [-1,1.5]$ with $t^*=1$ and $D=1$. Note that the time axis has a different orientation than usual.}
\end{figure}

The function 
\begin{equation*}
F(t,x)=\int_{-\infty}^x (u^2+u_x^2)(t,y)dy ,
\end{equation*}
which is given by 
\begin{equation*}
F(t,x)= \begin{cases} 
\begin{cases} 
D^2(1-e^{2q(t)})e^{2(x-q(t))}, & x<q(t),\\
2D^2 + 2p^2(t)e^{2q(t)}\sinh(2x), & q(t)<x<-q(t),\\
4D^2-D^2(1-e^{2q(t)})e^{-2(x+q(t))}, & -q(t)<x,
\end{cases} & t\in [0, t^*),\\
0, & t\in [t^*, \infty).
\end{cases}
\end{equation*}
is not continuous, since 
\begin{equation*}
\lim_{t\uparrow t^*}F(t,x)= 
\begin{cases} 
0, & x<0,\\ 4D^2 , & 0<x.
\end{cases}
\end{equation*}
A closer look reveals that $F(t,x)$ is continuous on $(\Real^+\times \Real )\backslash H$ and $F(t,x)$ has a jump of height $4D^2$ when crossing the half line $H$, which is given by $H=\{(t^*, x)\mid x\in [0, \infty)\}$. In addition, observe that for each $t\in \Real^+$, the function $F(t, \cdot)$ is absolutely continuous. 

\begin{figure}
\includegraphics[width=8cm]{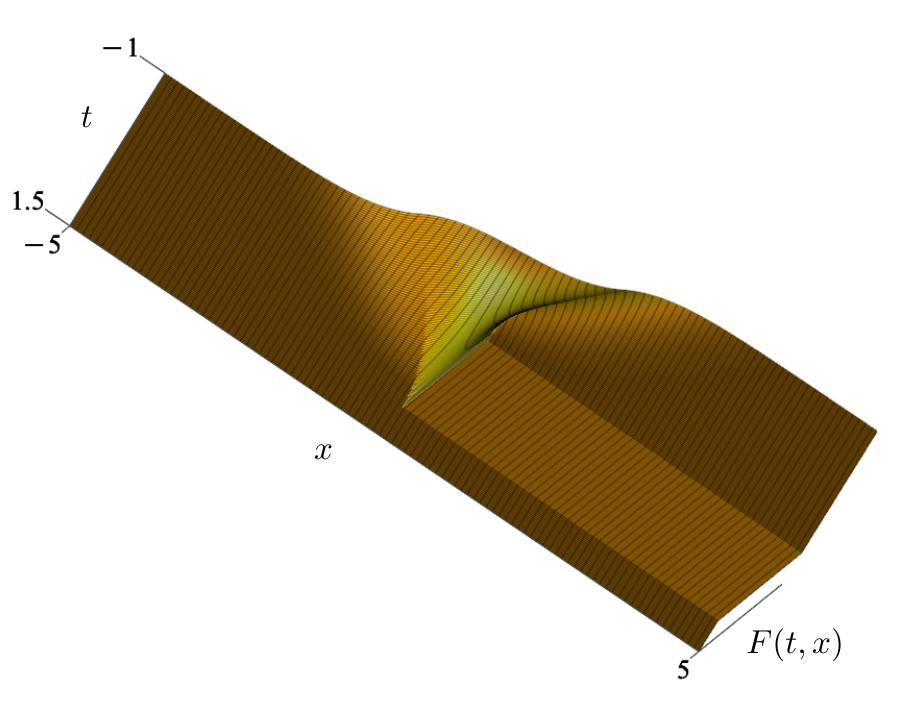}
\caption{Plot of the function $F(t,x)$ for $t\in [-1,1.5]$ with $t^*=1$ and $D=1$. Note that the time axis has a different orientation than usual.}
\end{figure}

Also the functions $p(t,x)$ and $p_x(t,x)$ can be computed explicitly. 
For any $t\in [0, t^*)$, the function $p(t,x)$  is given by 
\begin{equation*}
p(t,x)=
\begin{cases}
\frac14 \frac{D^4}{p^2(t)} (e^{x+q(t)}-2e^{2(x-q(t))} + 3e^{x-q(t)}) &\\
\qquad - \frac12 (p^2(t)-D^2)e^x(\sinh(q(t))+\sinh(3q(t))), & x<q(t),\\
\frac14 \frac{D^4}{p^2(t)}  (e^{q(t)-x}+ e^{x+q(t)}) & \\
\qquad + \frac12 (p^2(t)-D^2)\big(-2\cosh(2x)-2& \\ 
 \quad\qquad\qquad   + \cosh(x)(3e^{-q(t)}+2e^{q(t)}-e^{3q(t)})\big) , & q(t)<x<-q(t),\\
\frac14 \frac{D^4}{p^2(t)} (e^{-x+q(t)}-2e^{-2(x+q(t))}+ 3e^{-(x+q(t))}), & \\
\qquad - \frac12 (p^2(t)-D^2)e^{-x}(\sinh(q(t))+\sinh(3q(t))), & -q(t)<x,
\end{cases} 
\end{equation*}
while 
\begin{equation*}
p(t,x)=0 \quad \text{ for all } (t,x) \in [t^*,\infty)\times \Real.
\end{equation*}
Furthermore,
\begin{equation*}
\lim_{t\uparrow t^*} p(t,x)= D^2 e^{-\vert x\vert},
\end{equation*}
which implies that $p(t,x)$ is continuous on $(\Real^+\times \Real) \backslash L$ and has a jump when crossing the line $L= \{(t^*,x)\mid x\in \Real\}$. Observe that while $F(t,x)$ only has a jump along the half line $H$ starting at the point $(t^*, 0)$, $p(t,x)$ has a jump along the line $L$. Furthermore, for any $t\in \Real^+$, the function $p(t,\cdot)$ is continuous. Moreover, it can be shown that for each Lipschitz continuous curve $\sigma(t): [T_1, T_2] \to \Real$ with $0\leq T_1<T_2\leq T$, the function $g(t)=p(t, \sigma(t))$ has at most one jump at $t=t^*$ and is a function of bounded variation. 

\begin{figure}
\includegraphics[width=8cm]{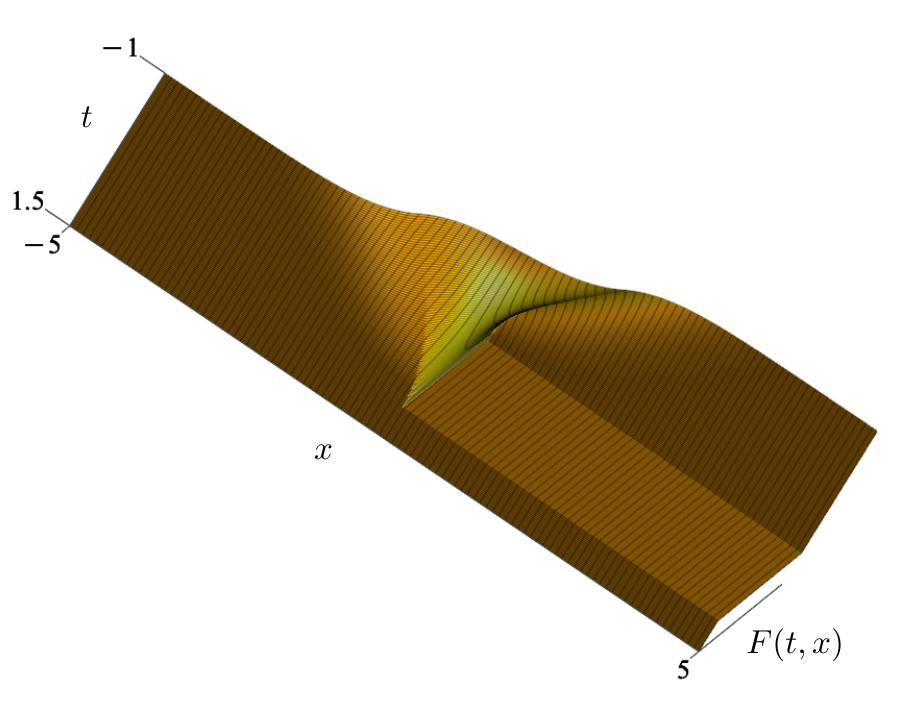}
\caption{Plot of the function $p(t,x)$ for $t\in [-1,1.5]$ with $t^*=1$ and $D=1$. Note that the time axis has a different orientation than usual.}
\end{figure}

For any $t\in [0, t^*)$, the function $p_x(t,x)$ is given by  
\begin{equation*}
p_x(t,x)=
\begin{cases}
\frac14 \frac{D^4}{p^2(t)} (e^{x+q(t)}-4e^{2(x-q(t))} + 3e^{x-q(t)}) &\\
\quad - \frac12 (p^2(t)-D^2)e^x(\sinh(q(t))+\sinh(3q(t))), & x<q(t),\\
\frac14 \frac{D^4}{p^2(t)}  (-e^{q(t)-x}+ e^{x+q(t)}) & \\
\quad + \frac12 (p^2(t)-D^2)\big(-4\sinh(2x)& \\ 
 \quad\qquad  + \sinh(x)(3e^{-q(t)}+2e^{q(t)}-e^{3q(t)})\big) , & q(t)<x<-q(t),\\
\frac14 \frac{D^4}{p^2(t)} (-e^{-x+q(t)}+4e^{-2(x+q(t))}-3e^{-(x+q(t))}), & \\
\quad + \frac12 (p^2(t)-D^2)e^{-x}(\sinh(q(t))+\sinh(3q(t))), & -q(t)<x,
\end{cases} 
\end{equation*}
while 
\begin{equation*}
p_x(t,x)=0 \quad \text{ for all } (t,x) \in [t^*,\infty)\times \Real.
\end{equation*}
Furthermore, 
\begin{equation*}
\lim_{t\uparrow t^*} p_x(t,x)= \mathrm{sign}(x) D^2e^{-\vert x\vert},
\end{equation*}
which implies that $p_x(t,x)$ is continuous on $(\Real^+\times \Real) \backslash L$ and has a jump when crossing the line $L= \{(t^*,x)\mid x\in \Real\}$. Again, observe that while $F(t,x)$ only has a jump along the half line $H$ starting at the point $(t^*, 0)$, $p_x(t,x)$ has a jump along the line $L$. Furthermore, for any $t\in \Real^+$, the function $p_x(t,\cdot)$ is continuous.

%
%
%

\end{document}